\providecommand{\U}[1]{\protect\rule{.1in}{.1in}}
\numberwithin{equation}{section}
\def\eqlaw{\buildrel d \over =}
\newtheorem{theorem}{Theorem}[section]
\newtheorem{corollary}{Corollary}[section]
\newtheorem{definition}{Definition}[section]
\newtheorem{example}{Example}[section]
\newtheorem{lemma}{Lemma}[section]
\newtheorem{proposition}{Proposition}[section]
\newtheorem{remark}{Remark}[section]
\newenvironment{proof}[1][Proof]{\noindent\textbf{#1.} }{\ \rule{0.5em}{0.5em}}
\newdimen\dummy
\begin{document}

\title{Estimation for models defined by conditions on their L-moments}
\author{Michel Broniatowski$^{(1)}$ , Alexis Decurninge $^{(1,\ast)}$}
\maketitle

\begin{abstract}
This paper extends the empirical minimum divergence approach for models which
satisfy linear constraints with respect to the probability measure of the
underlying variable (moment constraints) to the case where such constraints
pertain to its quantile measure (called here semi parametric quantile
models).\ The case when these constraints describe shape conditions as handled
by the L-moments is considered and both the description of these models as
well as the resulting non classical minimum divergence procedures are
presented. These models describe neighborhoods of classical models used mainly
for their tail behavior, for example neighborhoods of Pareto or Weibull
distributions, with which they may share the same first L-moments. A parallel
is drawn with similar problems held in optimal transportation problems. The
properties of the resulting estimators are illustrated by simulated examples
comparing Maximum Likelihood estimators on Pareto and Weibull models to the
minimum Chi-square empirical divergence approach on semi parametric quantile
models, and others.

$^{(1)}$LSTA, Universit\'{e} Pierre et Marie Curie, Paris, France

$^{(\ast)}$Corresponding author.

\end{abstract}
\tableofcontents

\section{\bigskip Motivation and notation}

\label{section1} For univariate distributions, L-moments are expressed as the
expectation of a particular linear combination of order statistics. Let us
consider $r$ independent copies $X_{1},...,X_{r}$ of a random variable $X$
with $\mathbb{E}\left(  \left\vert X\right\vert \right)  $ a finite number.
The $r$-th L-moment is defined by
\begin{equation}
\lambda_{r}=\frac{1}{r}\sum_{k=0}^{r-1}(-1)^{k}\dbinom{r-1}{k}\mathbb{E}%
[X_{r-k:r}] \label{L-moments}%
\end{equation}
where $X_{1:r}\leq...\leq X_{r:r}$ denotes the order statistics. The four
first L-moment can be considered as a measure of location, dispersion,
skewness and kurtosis. Indeed $\lambda_{1}=\mathbb{E}(X)$, $\lambda_{2}$ is
expressed as $\lambda_{2}=\left(  1/2\right)  \mathbb{E}\left(  \left\vert
X-Y\right\vert \right)  $ with $Y$ an independent copy of $X$, $\lambda_{3}$
indicates the expected distance between the mean of the extreme terms and the
median one in a sample of three i.i.d. replications of $X$, and $\lambda_{4}$
is an indicator of the expected distance between the extreme terms of a sample
of four replicates of $X$ with respect to a multiple of the distance between
the two central terms.

L-moments constitute a robust alternative to traditional moments as
descriptors of a distribution since only the existence of $\mathbb{E}\left(
\left\vert X\right\vert \right)  $ is needed in order to insure their
existence. Since their introduction in Hosking's paper in 1990
(\cite{hosking90}), methods based on L-moments have become popular especially
in applications dealing with heavy-tailed distributions. As mentioned in
\cite{hosking90} and \cite{hosking92}:"The main advantage of L-moments over
conventional moments is that L-moments, being linear functions of the data,
suffer less from the effect of sampling variability: L-moments are more robust
than conventional moments to outliers in the data and enable more secure
inferences to be made from small samples about an underlying probability
distribution. Also as seen through (\ref{L-moments}) the L-moments are
determined by the expectation of extreme order statistics, and vice versa".
This motivates their success for the inference in models pertaining to the
tail behavior of random phenomenons.

In this article, we will consider semi-parametric models conditioned by
constraints on a finite number of L-moments. Let us mention three examples of
such models; the two first examples describe neighborhoods of the Weibull and
the Pareto models, which are classical benchmarks for the description of tail
properties, and the third one describes a family of distributions which
express some loose symmetry property.

\begin{example}
\label{example1}We first consider the model which is the family of all the
distributions of a r.v. $X$ whose second, third and fourth L-moments verify :
\begin{equation}
\left\{
\begin{array}
[c]{lll}%
\lambda_{2}=\sigma(1-2^{-1/\nu})\Gamma(1+1/\nu) &  & \\
\lambda_{3}=\lambda_{2}[3-2\frac{1-3^{-1/\nu}}{1-2^{-1/\nu}})] &  & \\
\lambda_{4}=\lambda_{2}[6+\frac{5(1-4^{-1/\nu})-10(1-3^{-1/\nu})}{1-2^{-1/\nu
}})] &  &
\end{array}
\right.  \label{ModeleWeibull}%
\end{equation}
for any $\sigma>0,\nu>0$. These distributions share their first L-moments\ of
order 2, 3 and 4 with those of a Weibull distribution with scale and shape
parameter $\sigma$ and $\nu$. When $X$ is substituted by $Y:=X+a$ for some
real number $a$ then the distribution of $Y$ is Weibull with a shifted
support, hence with the same parameters $\sigma$ and $\nu$ as $X$; the r.v.
$Y$ shares the same L-moments $\lambda_{r}$ with those of $X$ but for $r=1$
and the model (\ref{ModeleWeibull}) describes a neighborhood of the continuum
of all Weibull distributions on $\left[  a,\infty\right)  $ or on $\left(
-\infty,a\right]  $ when $a$ belongs to $\mathbb{R}.$ Hence this model aims at
describing a shape constraint on the tail of the distribution of the data,
independently of its location.\ 
\end{example}

\begin{example}
\label{example2}Secondly, we consider the model which is the space of the
distributions whose second, third and fourth L-moments verify :
\begin{equation}
\left\{
\begin{array}
[c]{lll}%
\lambda_{2} & = & \frac{\sigma}{(1-\nu)(2-\nu)}\\
\lambda_{3} & = & \lambda_{2}\frac{1+\nu}{3-\nu}\\
\lambda_{4} & = & \lambda_{2}\frac{(1+\nu)(2+\nu)}{(3-\nu)(4-\nu)}%
\end{array}
\right.  \label{Modele pareto}%
\end{equation}
for any $\sigma>0,\nu\in\mathbb{R}$. These distributions share their first
L-moments with those of a generalized Pareto distribution with scale and shape
parameter $\sigma$ and $\nu$. The same remark as in the above example holds;
model (\ref{Modele pareto}) describes a neighborhood of the whole continuum of
Pareto distributions on $\left[  a,\infty\right)  $ or on $\left(
-\infty,a\right]  $ when $a$ belongs to $\mathbb{R}.$
\end{example}

\begin{example}
\label{example3}Let finally be given an appealing example based on order
statistics, namely
\[
\left\{
\begin{array}
[c]{lll}%
\mathbb{E}[X_{1:3}]=\theta-\nu &  & \\
\mathbb{E}[X_{2:3}]=\theta &  & \\
\mathbb{E}[X_{3:3}]=\theta+\nu &  &
\end{array}
\right.
\]
for any $\theta\in\mathbb{R},\nu>0$.
\end{example}

Before any further discussion on the scope of the present paper, a few
notation seems useful. For a non decreasing function $F$ with bounded
variation on any interval of $\mathbb{R}$ we denote $\mathbf{F}$ the
corresponding positive $\sigma-$finite measure on $\left(  \mathbb{R}%
,\mathcal{B}\left(  \mathbb{R}\right)  \right)  .$ For example when $F$ is the
distribution function of a probability measure, then this measure is denoted
$\mathbf{F}$ or $dF\mathfrak{.}$ Denote in this case
\[
F^{-1}(u):=\inf\left\{  x\in\mathbb{R}\text{ s.t. }F(x)\geq u\right\}  \text{
for }u\in\left(  0,1\right)
\]
the generalized inverse of $F$ , a left continuous non decreasing function
which is the quantile function of the probability measure $\mathbf{F}%
\mathfrak{.}$ Denote accordingly $\mathbf{F}^{-1}$ or $dF^{-1},$
indifferently, the quantile measure with distribution function $F^{-1}.$ If
$x_{1},.$ $.,x_{n}$ are $n$ realizations of a random variable $X$ with
absolutely continuous probability measure $\mathbf{F}$ then the gaps in the
empirical distribution function
\[
F_{n}(x):=\frac{1}{n}%
{\displaystyle\sum\limits_{i=1}^{n}}
{\Large 1}_{\left(  -\infty,x\right]  }\left(  x_{i}\right)
\]
are of size $1/n$ and are located on the $X_{i}$'s; the empirical quantile
function satisfies
\[
F_{n}^{-1}(u)=x_{i:n}\text{ when }\frac{i-1}{n}<u\leq\frac{i}{n}%
\]
and its gaps are given by
\[
F_{n}^{-1}\left(  \left(  i/n\right)  ^{+}\right)  -F_{n}^{-1}\left(  \left(
i/n\right)  \right)  =\mathbf{F}_{n}^{-1}\left(  i/n\right)  =x_{i+1:n}%
-x_{i:n}%
\]
where $x_{1:n}\leq...\leq x_{n:n}$ denotes the ordered sample; those gaps will
be denoted $\mathbf{F}_{n}^{-1}\left(  i/n\right)  $ or $dF_{n}^{-1}\left(
i/n\right)  $ indifferently; the empirical quantile measure has as its support
the uniformely sparsed points $\left\{  1/n,2/n,..,1\right\}  $ and attributes
masses equal to sampled spacings at those points; it follows that the
empirical quantile measure is a positive finite measure with finite support.
The quantile measure associated with the distribution function $F^{-1}$ is
also a positive $\sigma-$finite measure, defined on $\left(  0,1\right)  .$
The above construction defined the quantile measure from the probability
measure, but the reciprocal construction will be used, starting from a
quantile measure, defining its distribution function, turning to its inverse
to define a distribution of a probability measure, and then to the probability
measure itself.

We now turn back to our topics.

Models defined as in the above examples extend the classical parametric ones,
and are defined through some constraints on the form of the distributions.
They can be paralleled with models defined through moments conditions defined
as follows.

Let $\theta$ in $\Theta$ , an open subset of $\mathbb{R}^{d}$ and let
$g:(x,\theta)\in\mathbb{R}\times\Theta\rightarrow\mathbb{R}^{l}$ be a
$l$-valued function, each component of which is parametrized by $\theta
\in\Theta\subset\mathbb{R}^{d}$. Define
\[
M_{\theta}:=\left\{  \mathbf{F}\text{ s.t. }\int_{\mathbb{R}}g(x,\theta
)\mathbf{F}(dx)=0\right\}
\]
and the semi parametric model defined by moment conditions is the collection
of probability measures in
\begin{equation}
\mathcal{M}:=\bigcup_{\theta\in\Theta}M_{\theta}. \label{mom_model}%
\end{equation}
These semiparametric models are defined by $l$ conditions pertaining to $l$
moments of the distributions and are widely used in applied statistics. When
the dimension $d$ of the parameter space exceeds $l$, no plug-in method can
achieve any inference on $\theta$; however, various techniques have been
proposed in this case; see for example Hansen \cite{hansen96}, who defined the
Generalized Method of Moments (GMM) and Owen, who defined the so-called
empirical likelihood approach \cite{owen90}. Later, Newey and Smith
\cite{neweysmith04} or Broniatowski and Keziou \cite{broniatowski12} proposed
a refinement of the GMM approach minimizing a divergence criterion over the
model. A major feature of models defined by (\ref{mom_model}) lies in their
linearity with respect to the cumulative distribution function (cdf) which
brings a dual formulation of the minimization problem. Duality results easily
lead to the consistency and the asymptotic normality of the estimators of
$\theta$; see \cite{broniatowski12}\cite{neweysmith04}.\newline

Similarly as for models defined by (\ref{mom_model}), we can introduce
semiparametric linear quantile (SPLQ) models through
\begin{equation}
\bigcup_{\theta\in\Theta}L_{\theta}:=\bigcup_{\theta\in\Theta}\left\{
\mathbf{F}\text{ s.t. }\int_{0}^{1}F^{-1}(u)k(u,\theta)du=f(\theta)\right\}
\label{lmom_model}%
\end{equation}
where $\Theta\subset\mathbb{R}^{d}$, $k:(u,\theta)\in\lbrack0;1]\times
\Theta\rightarrow\mathbb{R}^{l}$ and $f:\Theta\rightarrow\mathbb{R}^{l}$. In
the above display, in accordance with the above notation, $F^{-1}$ denotes the
generalized inverse function of $F$, the distribution function of the measure
$\mathbf{F}$. Examples \ref{example1},\ref{example2} and \ref{example3} can be
written through (\ref{lmom_model}); see Section \ref{models_lmom}. We will
consider the case when $k$ is a function of $u$ only; this class contains many
examples, typically models defined by a finite number of constraints on
functions of the moments of the order statistics.\newline It is natural to
propose similar estimation procedures for SPLQ models based on a minimization
of a divergence. Models (\ref{lmom_model}) do not enjoy linearity with respect
to the cdf but with respect to the quantile function. Thus, as developed for
models defined by (\ref{mom_model}), we propose to minimize a divergence
criterion built on quantiles.\newline We will reformulate this criterion into
a minimization of the energy of a deformation of the empirical distribution. A
duality result and the subsequent consistency and asymptotic normality for the
corresponding family of estimators are presented in Sections \ref{section5}
and \ref{section7}.\newline Section \ref{deformation} draws a parallel with
with an optimal transportation approach.

In the following, the transpose of a vector $A$ will be denoted $A^{T}$ and if
$F$ and $G$ are two cdf's, $F\ll G$ means that $F$ is absolutely continuous
with respect to $G$. The Lebesgue measure on $\mathbb{R}$ is denoted
$d\lambda$ or $dx$, according to the common use in the context.


\section{L-moments}

\subsection{Definition and characterizations}

Let us consider data consisting in $\underline{X}=(x_{1},...,x_{r})$, which
are $r$ realizations of real-valued independent and identically distributed
(iid) copies $X_{1},..,X_{r}$ of a random variable (r.v.) $X$ with
distribution function $F$. The $r$-th L-moment $\lambda_{r}$ is defined by
\begin{equation}
\lambda_{r}=\frac{1}{r}\sum_{k=0}^{r-1}(-1)^{k}\dbinom{r-1}{k}\mathbb{E}%
[X_{r-k:r}] \label{eq:L_mom}%
\end{equation}
where $X_{1:r}\leq X_{2:r}\leq...\leq X_{r:r}$ denotes the order statistics of
$X_{1},..,X_{r}$.\newline From the above definition all L-moments $\lambda
_{r}$ but $\lambda_{1}$ are shift invariant, hence independent upon
$\lambda_{1}$. If $F$ $\ $is continuous, the expectation of the $j$-th order
statistics $X_{j:r}$ is (see David p.33\cite{david81})
\begin{equation}
\mathbb{E}[X_{j:r}]=\frac{r!}{(j-1)!(r-j)!}\int_{\mathbb{R}}xF(x)^{j-1}%
(1-F(x))^{r-j}\mathbf{F}(dx). \label{eq:lmom}%
\end{equation}
The first four L-moments are
\[%
\begin{array}
[c]{lll}%
\lambda_{1} & = & \mathbb{E}[X]\\
\lambda_{2} & = & \frac{1}{2}\mathbb{E}[X_{2:2}-X_{1:2}]\\
\lambda_{3} & = & \frac{1}{3}\mathbb{E}[X_{3:3}-2X_{2:3}+X_{1:3}]\\
\lambda_{4} & = & \frac{1}{4}\mathbb{E}[X_{4:4}-3X_{3:4}+3X_{2:4}-X_{1:4}].
\end{array}
\]

\begin{remark}
The second L-moment is equal to the half of the absolute mean difference
\[
\lambda_{2}=\frac{1}{2}\mathbb{E}[|X-Y|]
\]
where $X$ and $Y$ are independently sampled from the same distribution $F$.
The ratio $\frac{\lambda_{2}}{\lambda_{1}}$ is known as the Gini coefficient.
\end{remark}

The expectations of the extreme order statistics characterize a distribution:
if $\mathbb{E}\left(  \left\vert X\right\vert \right)  $ is finite, either of
the sets $\left\{  \mathbb{E}\left(  X_{1:n}\right)  ,n=1,..\right\}  $ or
$\left\{  \mathbb{E}\left(  X_{n:n}\right)  ,n=1,..\right\}  $ characterize
the distribution of $X;$ see \cite{Chan67} and \cite{Konheim71}. Since the
moments of order statistics are defined by the family of L-moments, those also
characterize the distribution of $X.$

The $r-$th L-moment ratio is defined for $r\geq2$ by
\[
\tau_{r}=\frac{\lambda_{r}}{\lambda_{2}}.
\]
The interpretation of $\lambda_{1},\lambda_{2},\tau_{3},\tau_{4}$ as measures
of location, scale, skewness and kurtosis respectively and the existence of
all L-moments whenever $\int|x|\mathbf{F}(dx)<\infty$ makes them good
alternatives to moments.

\begin{remark}
\label{RemStieltjes}We can define from the quantile function $F^{-1}%
:[0;1]\rightarrow\mathbb{R}$ an associated measure on $\mathcal{B}([0;1])$
\[
\mathbf{F}^{-1}(B)=\int_{0}^{1}\mathds{1}_{x\in B}dF^{-1}(x)\in\mathbb{R}%
\cup\{-\infty,+\infty\}.
\]
The above integral is a Riemann-Stieltjes integral. It defines a $\sigma
$-finite measure since $F^{-1}$ has bounded variations on every interval of
the form $[a,b]$ with $0<a\leq b<1$. For any $\mathbf{F}^{-1}$-measurable
function $a:\mathbb{R}\rightarrow\mathbb{R}$ , it holds
\[
\int_{0}^{1}a(x)dF^{-1}(x)=\int_{0}^{1}a(x)\mathbf{F}^{-1}(dx).
\]

\end{remark}

Writing the L-moments of a distribution $F$ as an inner product of the
corresponding quantile function with a specific complete orthogonal system of
polynomials in $L^{2}\left(  0,1\right)  $ is a cornerstone in the derivation
of statistical inference in SPLQ models. The shifted Legendre polynomials
define such a system of functions.

\begin{definition}
\label{DefLegendreshift}The shifted Legendre polynomial of order $r$ is
\begin{equation}
L_{r}(t)=\sum_{k=0}^{r}(-1)^{k}\dbinom{r}{k}^{2}t^{r-k}(1-t)^{k}=\sum
_{k=0}^{r}(-1)^{r-k}\dbinom{r}{k}\dbinom{r+k}{k}t^{k}.
\label{ShiftedLegendre-L_r}%
\end{equation}

For $r\geq1$ define $K_{r}$ as the integrated shifted Legendre polynomials
\begin{equation}
K_{r}(t)=\int_{0}^{t}L_{r-1}(u)du=-t(1-t)\frac{J_{r-2}^{(1,1)}(2t-1)}{r-1}
\label{jacobi}%
\end{equation}
with $J_{r-2}^{(1,1)}$ the corresponding Jacobi polynomial (see
\cite{hosking89})
\[
J_{r-2}^{(1,1)}(2t-1)=\frac{\Gamma(r)}{(r-2)!\Gamma(r+1)}\sum_{k=0}%
^{r-2}\dbinom{k}{r-2}\frac{\Gamma(r+1+k)}{\Gamma(2+k)}(t-1)^{k}.
\]

\end{definition}

The following result holds.

\begin{proposition}
\label{Prop Tonelli}Let $F$ be any cdf and assume that $\int\left\vert
x\right\vert dF(x)$ is finite. Then for any $r\geq1,$ it holds
\begin{equation}
\lambda_{r}=\int_{0}^{1}F^{-1}(t)L_{r-1}(t)dt=\int_{0}^{1}F^{-1}(t)dK_{r}(t)
\label{eq:lmom1}%
\end{equation}
where the last integral is the Stieltjes integral of $F^{-1}$ with respect to
the function $t\mapsto K_{r}(t).$
\end{proposition}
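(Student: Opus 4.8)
The plan is to run the order--statistic definition \eqref{eq:L_mom} of $\lambda_{r}$ through the probability integral transform, turning each $\mathbb{E}[X_{j:r}]$ into an integral of $F^{-1}$ against a Beta density, and then to recognise the resulting polynomial weight as the shifted Legendre polynomial $L_{r-1}$. First I would record the quantile representation of order statistics, which is what lets the statement hold for an \emph{arbitrary} cdf $F$ (for continuous $F$ one could instead start from \eqref{eq:lmom}): if $U_{1:r}\le\dots\le U_{r:r}$ are the order statistics of $r$ i.i.d.\ uniform variables on $(0,1)$, then, $F^{-1}$ being non-decreasing, $(X_{1:r},\dots,X_{r:r})\eqlaw(F^{-1}(U_{1:r}),\dots,F^{-1}(U_{r:r}))$, so $\mathbb{E}[X_{j:r}]=\mathbb{E}[F^{-1}(U_{j:r})]$. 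Since $U_{j:r}$ has the $\mathrm{Beta}(j,r-j+1)$ density $b_{j,r}(t)=\frac{r!}{(j-1)!\,(r-j)!}\,t^{j-1}(1-t)^{r-j}$ on $(0,1)$, this gives $\mathbb{E}[X_{j:r}]=\int_{0}^{1}F^{-1}(t)\,b_{j,r}(t)\,dt$. Here the hypothesis enters: $\int|x|\,dF(x)<\infty$ is equivalent to $F^{-1}\in L^{1}(0,1)$, and each $b_{j,r}$ is a polynomial, hence bounded on $[0,1]$; thus every integral above converges absolutely, and Tonelli's theorem (equivalently, that a finite sum of absolutely convergent integrals may be pulled under the integral sign) legitimises the interchange below.

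Then I would substitute $j=r-k$ into \eqref{eq:L_mom} and pull the finite sum inside:
\[
\lambda_{r}=\int_{0}^{1}F^{-1}(t)\left[\frac1r\sum_{k=0}^{r-1}(-1)^{k}\binom{r-1}{k}\frac{r!}{(r-1-k)!\,k!}\,t^{\,r-1-k}(1-t)^{k}\right]dt .
\]
A one-line computation gives $\frac1r\binom{r-1}{k}\frac{r!}{(r-1-k)!\,k!}=\binom{r-1}{k}^{2}$, so the bracket is exactly $\sum_{k=0}^{r-1}(-1)^{k}\binom{r-1}{k}^{2}t^{\,r-1-k}(1-t)^{k}=L_{r-1}(t)$ by the first form of \eqref{ShiftedLegendre-L_r} in Definition \ref{DefLegendreshift} (with $r$ replaced by $r-1$). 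This is the first equality. For the second, I would note that by \eqref{jacobi} the polynomial $K_{r}$ satisfies $K_{r}(t)=\int_{0}^{t}L_{r-1}(u)\,du$, hence $K_{r}\in C^{1}([0,1])$ with $K_{r}'=L_{r-1}$; therefore the Stieltjes measure $dK_{r}$ has the bounded Lebesgue density $L_{r-1}$, and since $F^{-1}\in L^{1}(0,1)$ one gets $\int_{0}^{1}F^{-1}(t)\,dK_{r}(t)=\int_{0}^{1}F^{-1}(t)L_{r-1}(t)\,dt$, by the same principle invoked in Remark \ref{RemStieltjes}.

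I expect the two genuine points to be: (i) obtaining the representation $\mathbb{E}[X_{j:r}]=\int_{0}^{1}F^{-1}(t)b_{j,r}(t)\,dt$ for a possibly discontinuous $F$ through the quantile transform, together with the integrability bookkeeping that makes the name ``Tonelli'' apt; and (ii) the algebraic collapse of the binomial-weighted Beta densities onto the single polynomial $L_{r-1}$ — the identity $\frac1r\binom{r-1}{k}\frac{r!}{(r-1-k)!\,k!}=\binom{r-1}{k}^{2}$ is the crux. The passage from the Stieltjes form to the ordinary Lebesgue integral is then routine once $K_{r}'=L_{r-1}$ is observed.
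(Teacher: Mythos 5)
Your proof is correct and follows essentially the same route as the paper's: both rest on the quantile transform $F^{-1}(U)\eqlaw X$ (the paper's Lemma \ref{transport1}) and the $\mathrm{Beta}(j,r-j+1)$ law of the uniform order statistic $U_{j:r}$, yielding $\mathbb{E}[X_{j:r}]=\frac{r!}{(j-1)!(r-j)!}\int_{0}^{1}F^{-1}(t)t^{j-1}(1-t)^{r-j}dt$. You simply make explicit the binomial algebra collapsing the weighted Beta densities onto $L_{r-1}$ and the passage to the Stieltjes form via $K_{r}'=L_{r-1}$, both of which the paper leaves implicit.
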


\begin{proof}
The proof is based on the following fundamental Lemma, whose proof is deferred
to the Appendix.

\begin{lemma}
\label{transport1}Let $U$ be a uniform random variable on [0;1] and $X$ be a
random variable with $F$. Then $F^{-1}(U)=_{d}X$.
\end{lemma}

Let $U_{1},...,U_{r}$ be $r$ independent random variable uniformly distributed
on $[0;1]$ and denote by $U_{1:r}\leq...\leq U_{r:r}$ the ordered statistics.
Then
\[
(X_{1:r},...,X_{r:r})\eqlaw (F^{-1}(U_{1:r}),...,F^{-1}(U_{r:r})); %
\]
hence for $1\leq j\leq r$
\[
\mathbb{E}[X_{j:r}]=\mathbb{E}[F^{-1}(U_{j:r})]=\frac{r!}{(j-1)!(r-j)!}%
\int_{0}^{1}F^{-1}(t)t^{j-1}(1-t)^{r-j}dt,
\]
which ends the proof of Proposition \ref{Prop Tonelli}.
\end{proof}

Before going any further, we present an useful Lemma, the proof of which is
also deferred to the Appendix.

\begin{lemma}
\label{stieljes}Let a be a real-valued function such that $\int_{\mathbb{R}%
}a(x)dF(x)<\infty$. Then
\begin{equation}
\int_{\mathbb{R}}a(x)d\mathbf{F}(x)=\int_{0}^{1}a(F^{-1}(t))dt.
\label{eq_stieltjes0}%
\end{equation}
Similarly if $t\rightarrow b(t)$ is a real-valued function such that $\int
_{0}^{1}b(t)\mathbf{F}^{-1}(dt)<\infty$. Then
\begin{equation}
\int_{0}^{1}b(t)\mathbf{F}^{-1}(dt)=\int_{0}^{1}b(F(x))dx.
\label{eq_stieltjes}%
\end{equation}

\end{lemma}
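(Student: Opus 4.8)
The plan is to obtain both identities as instances of the change–of–variables (image measure) theorem: the first directly from Lemma~\ref{transport1}, the second from its ``mirror image''.

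For the first identity, recall that Lemma~\ref{transport1} states precisely that $\mathbf{F}$ is the pushforward of Lebesgue measure on $(0,1)$ under $t\mapsto F^{-1}(t)$; equivalently $a(X)\eqlaw a(F^{-1}(U))$ for every Borel function $a$ and $U$ uniform on $[0,1]$. Since $\int_{\mathbb{R}}|a|\,dF<\infty$ guarantees that $\mathbb{E}[a(X)]$ is well defined, the image–measure formula gives $\int_{\mathbb{R}}a(x)\,d\mathbf{F}(x)=\mathbb{E}[a(X)]=\mathbb{E}[a(F^{-1}(U))]=\int_{0}^{1}a(F^{-1}(t))\,dt$. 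No work beyond quoting Lemma~\ref{transport1} is needed here.

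For the second identity I would establish the dual fact that the quantile measure $\mathbf{F}^{-1}$ is the pushforward of Lebesgue measure (restricted to the support of $\mathbf{F}$) under the cdf $F$, and then apply the image–measure theorem once more to conclude $\int_{0}^{1}b(t)\,\mathbf{F}^{-1}(dt)=\int b(F(x))\,dx$. The key elementary input is the generalized–inverse duality
\[
F^{-1}(t)\le x\ \Longleftrightarrow\ t\le F(x),\qquad t\in(0,1),\ x\in\mathbb{R},
\]
which holds because $\{y:F(y)\ge t\}=[F^{-1}(t),\infty)$ by monotonicity and right–continuity of $F$. From this one reads off, for $0<s<1$, that $\{x:0<F(x)\le s\}$ is, up to one endpoint, the interval $\bigl(F^{-1}(0^{+}),F^{-1}(s^{+})\bigr)$, whose length $F^{-1}(s^{+})-F^{-1}(0^{+})$ is exactly $\mathbf{F}^{-1}\bigl((0,s]\bigr)$. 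Hence the two measures agree on the generating semiring of such intervals and so coincide; the identity for a general $b$ with $\int_{0}^{1}|b|\,d\mathbf{F}^{-1}<\infty$ then follows by the standard extension from indicators to simple functions (linearity) and to integrable functions (monotone/dominated convergence), just as in the proof of the transfer theorem.

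The only genuine delicacy — and the step where care is required — lies in the discontinuities: jumps of $F$ correspond to flat stretches of $F^{-1}$, and flat stretches of $F$ (atoms of $\mathbf{F}$) correspond to jumps of $F^{-1}$, so the displayed equivalence and the description of $\{x:F(x)\le s\}$ hold only up to the at most countably many boundary points of these intervals; that exceptional set is Lebesgue–null and is not charged by the continuous part of $\mathbf{F}^{-1}$, so it is harmless. A secondary point to monitor is integrability near the endpoints $t=0,1$, where $F^{-1}$ may be unbounded and $F$ constant on a half–line: one truncates to $[\alpha,\beta]\subset(0,1)$, proves the identity there, and lets $\alpha\downarrow0$, $\beta\uparrow1$, invoking the integrability hypothesis. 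In the continuous case used throughout the paper the first delicacy disappears and the argument is routine.
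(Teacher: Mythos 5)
Your proposal is correct, but it takes a genuinely different route from the paper's. The paper does not deduce (\ref{eq_stieltjes0}) from Lemma \ref{transport1}: it decomposes $[0,1]$ into the support $S$ of $\mathbf{F}^{-1}$ and its complement (a countable union of open intervals on which $F^{-1}$ is constant), changes variables on $S$ where $F^{-1}$ is strictly monotone, identifies each constancy interval $(t_{2i},t_{2i+1})$ with an atom of $\mathbf{F}$ at $F^{-1}(t_{2i})$, and reassembles the pieces into $\int_{\mathrm{supp}(F)}a\,dF$; it then declares (\ref{eq_stieltjes}) to be ``similar''. You instead treat both identities as instances of the image-measure theorem: (\ref{eq_stieltjes0}) is immediate from $F^{-1}(U)=_{d}X$ (Lemma \ref{transport1}) plus the transfer formula, and (\ref{eq_stieltjes}) follows once $\mathbf{F}^{-1}$ is identified as a pushforward of Lebesgue measure under $F$, which you verify on the $\pi$-system of intervals $(0,s]$ via the duality $F^{-1}(t)\le x\Leftrightarrow t\le F(x)$ and extend by the standard machine. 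Your argument is shorter for the first identity, since it reuses a lemma the paper has already proved rather than re-deriving the pushforward property by hand, and it is more explicit than the paper for the second identity, at the modest cost of invoking uniqueness of $\sigma$-finite measures agreeing on a generating $\pi$-system (which holds here since $F^{-1}$ has bounded variation on compact subintervals of $(0,1)$).

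Two small points of prose should be repaired, though neither affects the validity of the computation. First, the Lebesgue measure you push forward must be restricted to $\{x:0<F(x)<1\}$, not to the support of $\mathbf{F}$: a gap $(c,d)$ in that support, on which $F$ is constant equal to some $s_{0}\in(0,1)$, carries Lebesgue mass $d-c$ that has to land on the atom of $\mathbf{F}^{-1}$ at $s_{0}$ (this mass is exactly the jump $F^{-1}(s_{0}^{+})-F^{-1}(s_{0})$), and restricting to the support would discard it. Your displayed evaluation of $\lambda(\{x:0<F(x)\le s\})$ already uses the correct set, so only the parenthetical description is off. Second, flat stretches of $F$ are gaps in the support of $\mathbf{F}$, not atoms of $\mathbf{F}$; the atoms of $\mathbf{F}$ are the jumps of $F$, i.e.\ the flat stretches of $F^{-1}$. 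Again this mislabel is harmless to the argument itself.
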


\begin{remark}
\label{remark2} As a consequence of Lemma \ref{stieljes} and equation
(\ref{eq:lmom1}), it holds
\[
\lambda_{r}=\int_{0}^{1}xdK_{r}(F(x)).
\]

\end{remark}

\begin{remark}
If we consider a multinomial distribution with support $x_{1}\leq x_{2}
\leq... \leq x_{n}$ and associated weights $\pi_{1},...,\pi_{n}$ ($\sum
_{i=1}^{n} \pi_{i} = 1$), we get
\[
\lambda_{r} = \sum_{i=1}^{n} w_{i}^{(r)} x_{i} = \sum_{i=1}^{n} \left[  K_{r}
\left(  \sum_{a=1}^{i} \pi_{a} \right)  - K_{r}\left(  \sum_{a=1}^{i-1}
\pi_{a} \right)  \right]  x_{i} = \int_{0}^{1} L_{r-1}(t)Q_{\pi}(t)dt
\]
with
\[
Q_{\pi}(t) = \left\{
\begin{array}
[c]{ll}%
x_{1} & \text{ if \ \ $0\leq t\leq\pi_{1}$}\\
x_{i} & \text{ if \ \ $\sum_{a=1}^{i-1} \pi_{a}< t \leq\sum_{a=1}^{i} \pi_{a}%
$}%
\end{array}
\right.  .
\]
This example illustrates Remark \ref{remark2}.%

\begin{figure}
[ptb]
\begin{center}
\includegraphics[
natheight=9.375400in,
natwidth=13.333700in,
height=2.4102in,
width=3.4169in
]%
{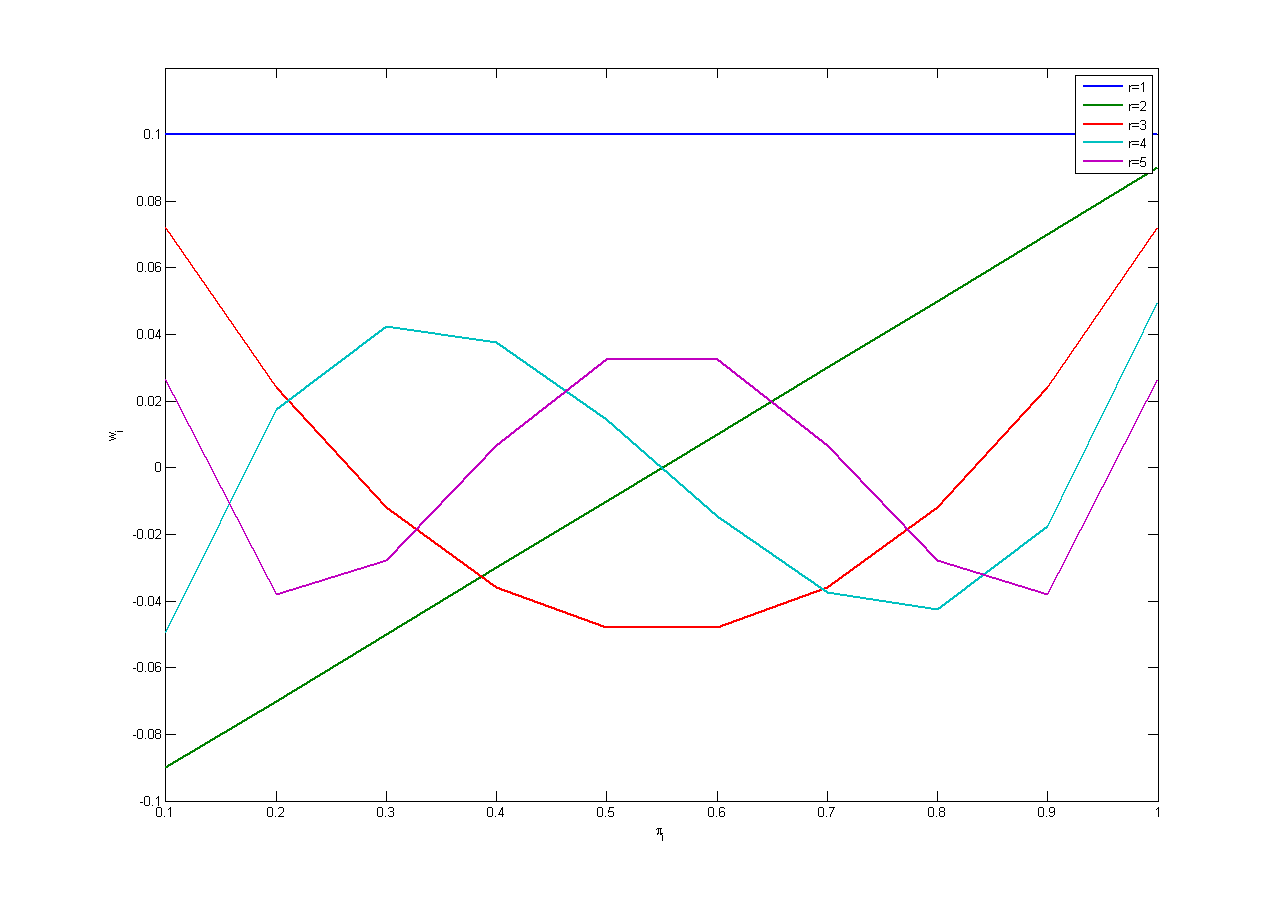}%
\end{center}
\caption{Weights $w_i^{(r)}$ for the uniform law with a support containing 10 points}
\label{fig:poids_lmm}
\end{figure}
Figure \ref{fig:poids_lmm} provides the first weight $w_{i}^{(r)}$ when the
$x_{i}$'s are equally sparsed on $\left[  0,1\right]  $ with equal weights
$\pi_{1}=..=\pi_{n}=1/n.$
\end{remark}

The following characterization for the L-moments with order larger or equal to
$2$ is used in Section \ref{models_lmom}.

\begin{proposition}
\label{prop02}If $r\geq2$ and $\int_{\mathbb{R}}\left\vert x\right\vert
dF(x)<+\infty$, then
\begin{equation}
\lambda_{r}=\int_{0}^{1}F^{-1}(t)dK_{r}(t)=-\int_{0}^{1}K_{r}(t)\mathbf{F}%
^{-1}(dt). \label{eq:lmom2}%
\end{equation}

\end{proposition}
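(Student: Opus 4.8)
The first equality in \eqref{eq:lmom2} is exactly the content of Proposition \ref{Prop Tonelli}, so the whole task is to prove the second equality
\[
\int_{0}^{1}F^{-1}(t)\,dK_{r}(t)=-\int_{0}^{1}K_{r}(t)\,\mathbf{F}^{-1}(dt),
\]
and this I would obtain by a Riemann--Stieltjes integration by parts, the hypothesis $r\geq 2$ entering precisely through the vanishing of $K_{r}$ at both endpoints. The plan is: (i) record that $K_{r}(0)=K_{r}(1)=0$ and that $K_{r}(t)=O\!\left(t(1-t)\right)$ near the endpoints; (ii) record that $\int_{0}^{1}|F^{-1}(t)|\,dt<\infty$ (equivalent to $\int|x|\,dF(x)<\infty$ by Lemma \ref{stieljes}) forces $tF^{-1}(t)\to 0$ as $t\to 0^{+}$ and $(1-t)F^{-1}(t)\to 0$ as $t\to 1^{-}$; (iii) integrate by parts on a subinterval $[a,b]\subset(0,1)$ and let $a\to 0^{+}$, $b\to 1^{-}$.

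For step (i): by Definition \ref{DefLegendreshift}, $K_{r}(t)=\int_{0}^{t}L_{r-1}(u)\,du$, so $K_{r}(0)=0$ trivially, and $K_{r}(1)=\int_{0}^{1}L_{r-1}(u)\,du=0$ for $r\geq 2$ because $L_{r-1}$ is orthogonal in $L^{2}(0,1)$ to the constant $L_{0}\equiv 1$ (this is also transparent from the closed form $K_{r}(t)=-t(1-t)J_{r-2}^{(1,1)}(2t-1)/(r-1)$ in \eqref{jacobi}, which moreover exhibits the factor $t(1-t)$, giving the bound $|K_{r}(t)|\leq C_{r}\,t(1-t)$ on $[0,1]$). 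For step (ii): since $F^{-1}$ is non-decreasing it has a constant sign on a neighbourhood of each endpoint; using monotonicity to compare $F^{-1}(t)$ with $F^{-1}$ on an adjacent interval one gets, near $0$, $|tF^{-1}(t)|\le \int_{I_{t}}|F^{-1}(s)|\,ds$ for a suitable interval $I_{t}$ of length $\asymp t$ shrinking to $0$ (e.g. $I_{t}=(t,2t)$ when $F^{-1}\ge 0$ there, $I_{t}=(t/2,t)$ when $F^{-1}\le 0$ there), and the right-hand side tends to $0$ as the tail of an integrable function; the argument at $t=1$ is symmetric. Combining with $|K_{r}(t)|\leq C_{r}\,t(1-t)$ gives $K_{r}(a)F^{-1}(a)\to 0$ and $K_{r}(b)F^{-1}(b)\to 0$.

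For step (iii): $K_{r}$ is a polynomial, hence continuous and of bounded variation on $[a,b]$, and $F^{-1}$ is non-decreasing (hence BV) there, so the integration-by-parts formula for Riemann--Stieltjes integrals applies and yields
\[
\int_{a}^{b}F^{-1}(t)\,dK_{r}(t)=K_{r}(b)F^{-1}(b)-K_{r}(a)F^{-1}(a)-\int_{a}^{b}K_{r}(t)\,dF^{-1}(t).
\]
Letting $a\to 0^{+}$ and $b\to 1^{-}$, the two boundary terms vanish by steps (i)--(ii). That the remaining integral converges absolutely to a finite limit is checked by a second integration by parts: $\int_{a}^{b}t(1-t)\,dF^{-1}(t)=[t(1-t)F^{-1}(t)]_{a}^{b}-\int_{a}^{b}(1-2t)F^{-1}(t)\,dt$, whose right-hand side is bounded uniformly in $a,b$ by $o(1)+\int_{0}^{1}|F^{-1}(t)|\,dt<\infty$; hence $\int_{0}^{1}|K_{r}(t)|\,\mathbf{F}^{-1}(dt)<\infty$. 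Passing to the limit therefore gives $\lambda_{r}=-\int_{0}^{1}K_{r}(t)\,dF^{-1}(t)$, and by Remark \ref{RemStieltjes} the Stieltjes integral $\int_{0}^{1}K_{r}(t)\,dF^{-1}(t)$ coincides with $\int_{0}^{1}K_{r}(t)\,\mathbf{F}^{-1}(dt)$, which is the claim.

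The routine parts are the two integrations by parts and the orthogonality computation; the only genuinely delicate point is step (ii), i.e.\ extracting from the mere integrability of $F^{-1}$ the pointwise decay $tF^{-1}(t)\to 0$ and $(1-t)F^{-1}(t)\to 0$ at the endpoints, which is exactly where the finiteness of $\mathbb{E}|X|$ is used and which makes the boundary terms disappear; everything else is bookkeeping.
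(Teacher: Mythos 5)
Your argument is correct, but it is not the route the paper takes. The paper's proof is a Fubini--Tonelli argument: it writes $F^{-1}(t)=\int_{0}^{t}\mathbf{F}^{-1}(du)$, interchanges the order of integration in $\int_{0}^{1}\int_{0}^{1}\mathds{1}_{0\leq u\leq t}\,\mathbf{F}^{-1}(du)\,dK_{r}(t)$, and concludes from $K_{r}(1)-K_{r}(u)=-K_{r}(u)$, using $K_{r}(1)=0$ for $r\geq 2$; the hypotheses enter only through the joint integrability needed for Fubini. You instead perform a Riemann--Stieltjes integration by parts on $[a,b]\subset(0,1)$ and control the boundary terms directly, via $|K_{r}(t)|\leq C_{r}\,t(1-t)$ together with the (correctly justified) fact that integrability of the monotone function $F^{-1}$ forces $tF^{-1}(t)\to 0$ and $(1-t)F^{-1}(t)\to 0$ at the endpoints. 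The Fubini route is shorter and cleaner, though as written in the paper it quietly identifies $F^{-1}(t)$ with $\int_{0}^{t}\mathbf{F}^{-1}(du)$, which differs from $F^{-1}(t)$ by the constant $F^{-1}(0^{+})$ (possibly $-\infty$); this is harmless for $r\geq 2$ since $\int_{0}^{1}dK_{r}=0$ annihilates constants, but it is exactly the kind of issue your endpoint analysis handles explicitly. Your version buys two things the paper's does not state: a transparent identification of where both hypotheses ($r\geq 2$ and $\mathbb{E}|X|<\infty$) are used, namely in killing the boundary terms $K_{r}F^{-1}$, and an explicit verification that $\int_{0}^{1}|K_{r}(t)|\,\mathbf{F}^{-1}(dt)<\infty$, so the right-hand side of \eqref{eq:lmom2} is absolutely convergent. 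Both proofs are valid; yours is longer but more self-contained.
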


\begin{proof}
This result follows as an application of Fubini-Tonelli Theorem. Indeed
\begin{align*}
\lambda_{r}  &  =\int_{0}^{1}F^{-1}(t)dK_{r}(t)\\
&  =\int_{0}^{1}\int_{0}^{t}\mathbf{F}^{-1}(du)dK_{r}(t)\\
&  =\int_{0}^{1}\int_{0}^{1}\mathds{1}_{0\leq u\leq t}\mathbf{F}%
^{-1}(du)dK_{r}(t).
\end{align*}
This last equality holds since $(u,t)\mapsto\mathds{1}_{0\leq u\leq t}$ is
measurable with respect to the measure $\mathbf{F}^{-1}\times dK_{r}$ since
$\mathbb{E}[X]<\infty$. Applying Fubini-Tonelli Theorem, it holds \newline%
\begin{align*}
\lambda_{r}  &  =\int_{0}^{1}\int_{0}^{1}\mathds{1}_{0\leq u\leq t}%
dK_{r}(t)\mathbf{F}^{-1}(du)\\
&  =\int_{0}^{1}\int_{0}^{1}\left[  K_{r}(1)-K_{r}(u)\right]  \mathbf{F}%
^{-1}(du)\\
&  =-\int_{0}^{1}K_{r}(u)\mathbf{F}^{-1}(du)
\end{align*}
since $K_{r}(1)=0$ for $r>1$.
\end{proof}

\begin{remark}
\label{remark_loc} That (\ref{eq:lmom2}) does not hold for $r=1$ follows from
the fact that if $G=F(.+a)$ for some $a\in\mathbb{R}$, then $\mathbf{G}%
^{-1}=\mathbf{F}^{-1}$. Hence, SPLQ models are shift-invariant. This can also
be seen setting $r=1$ in the right-hand side of (\ref{eq:lmom2}); in this
case, the integral is infinite (but if $supp(\mathbf{F})$ is bounded) whereas
$\lambda_{1}$ is supposed to be finite.
\end{remark}

\subsection{Estimation of L-moments}

Let $x_{1},...,x_{n}$ be iid realizations of a random variable $X$ with
distribution $F$ and L-moments $\lambda_{r}$. Define $F_{n}$ the empirical cdf
of the sample and $l_{r}$ the corresponding plug-in estimator of $\lambda_{r}%
$,
\begin{equation}
l_{r}=\int_{0}^{1}F_{n}^{-1}(t)L_{r-1}(t)dt. \label{empirical_vstat}%
\end{equation}
This estimator of $\lambda_{r}$ is biased as quoted in \cite{hosking90} and
\cite{xiao07}. $l_{r}$ is usually termed as a V-statistic. As noted upon in
\cite{hosking90} and \cite{xiao07}, the unbiased estimators of L-moments are
the following U-statistics
\[
l_{r}^{(u)}=\frac{1}{\dbinom{n}{r}}\sum_{1\leq i_{1}<\dots<i_{r}\leq n}%
\frac{1}{r}\sum_{k=0}^{r-1}(-1)^{k}\dbinom{r-1}{k}x_{i_{r-k}:n}.
\]

\begin{remark}
An alternative definition for $l_{r}$ as in (\ref{empirical_vstat}) can be
stated as follows. Conditionally on the realizations $x=(x_{1},...,x_{n})$,
define the uniform distribution on $x$. Then $l_{r}$ is the discrete L-moment
of order $r$ of this conditional distribution. It can therefore be defined
through
\[
l_{r}=\frac{1}{\dbinom{r+n-1}{n-1}}\sum_{1\leq i_{1}\leq\dots\leq i_{r}\leq
n}\frac{1}{r}\sum_{k=0}^{r-1}(-1)^{k}\dbinom{r-1}{k}x_{i_{r-k}:n}.
\]
Let us now extend Definition \ref{eq:L_mom} of the L-moments as follows. Let
$(i_{1},...,i_{r})$ be drawn without replacement from $\{1,...,r\}$. We then
define $x_{(i_{1})}\leq...\leq x_{(i_{r})}$ the corresponding ordered
observations and
\[
\lambda_{r}^{(u)}=\frac{1}{r}\sum_{k=0}^{r-1}(-1)^{k}\dbinom{r-1}{k}%
\mathbb{E}[x_{(i_{r-k})}]
\]
where the expectation is taken under the extraction process. Then $\lambda
_{r}^{(u)}$ and $l_{r}^{(u)}$ coincide.\newline Although $l_{r}^{(u)}$ is
unbiased, for sake of simplicity only $l_{r}$ which is asymptotically
unbiased, will be used in the sequel.
\end{remark}

These two estimators $l_{r}$ and $l_{r}^{(u)}$ of the L-moment $\lambda_{r}$
have the same asymptotic properties.

\begin{proposition}
Let us suppose that $F$ has finite variance. Then, for any $m\geq1$%
\[
\sqrt{n}\left[  \left(
\begin{array}
[c]{c}%
l_{1}\\
\vdots\\
l_{m}%
\end{array}
\right)  -\left(
\begin{array}
[c]{c}%
\lambda_{1}\\
\vdots\\
\lambda_{m}%
\end{array}
\right)  \right]  \rightarrow_{d}\mathcal{N}_{m}(0,\Lambda)
\]
where $\mathcal{N}_{m}$ denotes the multivariate normal distribution and the
elements of $\Lambda$ are given by
\[
\Lambda_{rs}=\int\int_{x<y}\left[  L_{r-1}(F(x))L_{s-1}(F(y))+L_{r-1}%
(F(y))L_{s-1}(F(x))\right]  F(x)(1-F(y))dxdy
\]
Furthermore, the same property holds for $l_{1},..,l_{r}$ substituted by
$l_{1}^{(u)},...,l_{m}^{(u)}$.
\end{proposition}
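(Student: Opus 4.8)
The plan is to establish the asymptotic normality of the vector $(l_1,\dots,l_m)$ by writing each $l_r$ as a smooth functional of the empirical process and then invoking the functional delta method, or equivalently by recognizing $l_r$ as an (asymptotically equivalent to a) U-statistic and applying the classical CLT for U-statistics. I would proceed as follows.

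First I would rewrite $l_r$ in a form amenable to asymptotic analysis. Starting from \eqref{empirical_vstat} and Remark \ref{remark2} applied to the empirical measure, one has $l_r=\int_0^1 x\, dK_r(F_n(x))$, or after an integration by parts (using $K_r(0)=K_r(1)=0$ for $r\ge 2$, and treating $\lambda_1=\int x\,dF_n(x)$ separately), $l_r = -\int_{\mathbb{R}} K_r(F_n(x))\, dx$ up to boundary terms. The key observation is that $l_r$ depends on $F_n$ only through $F_n$ composed inside the fixed smooth function $K_r$, so $l_r$ is a Hadamard-differentiable functional of $F_n$ at $F$ (the map $F_n\mapsto \int x\,dK_r(F_n(x))$ is linear in a suitable sense after linearizing $K_r$). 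Concretely, I would compute the influence function: writing $\hat\lambda_r(F)=\int_0^1 F^{-1}(t)L_{r-1}(t)\,dt$, the Gâteaux derivative at $F$ in the direction $\delta_x-F$ gives the influence function
\[
\mathrm{IF}(x;\hat\lambda_r,F) = \int_{\mathbb{R}} \bigl(\mathds{1}_{\{x\le y\}}-F(y)\bigr)\, L_{r-1}(F(y))\, dy,
\]
which follows from differentiating the quantile functional ($\partial F^{-1}(t)$ in direction $\delta_x-F$ equals $-(\mathds{1}_{\{x\le F^{-1}(t)\}}-t)/f(F^{-1}(t))$) and changing variables $t=F(y)$ via Lemma \ref{stieljes}. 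The finiteness of variance of $F$ guarantees this influence function is square-integrable, since $L_{r-1}$ is bounded on $[0,1]$ and $\int\int |\mathds{1}_{\{x\le y\}}-F(y)|\,dy$ has the requisite integrability when $\mathbb{E}X^2<\infty$.

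Next I would assemble the CLT. By the functional delta method (e.g.\ van der Vaart, Theorem 20.8), since $\sqrt{n}(F_n-F)$ converges in $\ell^\infty$ to an $F$-Brownian bridge, the vector $\sqrt{n}\bigl((l_1,\dots,l_m)-(\lambda_1,\dots,\lambda_m)\bigr)$ converges to a centered Gaussian vector whose covariance is $\Lambda_{rs}=\mathbb{E}\bigl[\mathrm{IF}(X;\hat\lambda_r,F)\,\mathrm{IF}(X;\hat\lambda_s,F)\bigr]$. Expanding this product and using $\mathbb{E}\bigl[(\mathds{1}_{\{X\le y\}}-F(y))(\mathds{1}_{\{X\le z\}}-F(z))\bigr]=F(y\wedge z)(1-F(y\vee z))$, then splitting the double integral over $\mathbb{R}^2$ into the regions $\{y<z\}$ and $\{z<y\}$ (the diagonal having measure zero), yields exactly the stated formula
\[
\Lambda_{rs}=\int\!\!\int_{x<y}\bigl[L_{r-1}(F(x))L_{s-1}(F(y))+L_{r-1}(F(y))L_{s-1}(F(x))\bigr]F(x)(1-F(y))\,dx\,dy.
\]
For the claim that $l_r^{(u)}$ has the same limiting law, I would note that $l_r^{(u)}$ is a genuine U-statistic of fixed degree $r$ with a kernel whose first-order Hájek projection has the same influence function as above, and that $\sqrt n(l_r - l_r^{(u)})\to 0$ in probability because the two differ only by boundary/ordering effects of order $O(1/n)$ in the relevant sense; hence both share the asymptotic covariance $\Lambda$.

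The main obstacle I expect is the rigorous justification of Hadamard differentiability of the quantile-based functional $F\mapsto \int_0^1 F^{-1}(t)L_{r-1}(t)\,dt$ — the inverse map $F\mapsto F^{-1}$ is only Hadamard differentiable under regularity of $F$ (a positive continuous density on the relevant range), and one must be careful about tails since $L^2$-control of the influence function requires the second-moment assumption rather than just boundedness. An alternative route that sidesteps the quantile-map differentiability is to work directly with the representation $l_r=-\int K_r(F_n(x))\,dx$ and differentiate through the smooth bounded function $K_r$, which needs only $\mathbb{E}|X|<\infty$ for existence and $\mathbb{E}X^2<\infty$ for the CLT integrability; I would present this latter approach as the cleaner one and relegate the equivalence of the two expressions to Remark \ref{remark2} and an integration by parts.
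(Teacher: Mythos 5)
Your argument is correct in substance, but it takes a different route from the paper, whose entire proof is a one-line appeal to Theorem 6 of Stigler \cite{stigler74}: the statistics $l_{r}$ are L-statistics (linear combinations of order statistics) with weights generated by the smooth bounded function $L_{r-1}$, so Stigler's CLT for L-statistics with smooth weight functions applies directly and already delivers the covariance $\Lambda_{rs}=\iint\left[F(x\wedge y)-F(x)F(y)\right]L_{r-1}(F(x))L_{s-1}(F(y))\,dx\,dy$, which is the stated formula after splitting off the diagonal. What you do instead is re-derive the needed special case from scratch via the influence function and the delta method; your second, preferred route --- linearizing $l_{r}=-\int K_{r}(F_{n}(x))\,dx$ through the polynomial $K_{r}$ rather than through the quantile map --- is exactly the right way to make this self-contained, sidesteps the Hadamard-differentiability issue for $F\mapsto F^{-1}$ that you correctly flag, and is in fact close in spirit to Stigler's own projection argument. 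Your covariance computation is right (note that the standard influence function of $F\mapsto\int_{0}^{1}F^{-1}(t)J(t)\,dt$ carries a minus sign, $-\int(\mathds{1}_{\{x\le y\}}-F(y))J(F(y))\,dy$, which you drop; this is immaterial for $\Lambda_{rs}$ since it is a product of two such terms), and the identification of $l_{r}^{(u)}$ as the U-statistic counterpart of the V-statistic $l_{r}$ with $\sqrt{n}(l_{r}-l_{r}^{(u)})\to 0$ is the standard justification for the last claim. The trade-off: the paper's proof is shorter and leans on a classical black-box result, while yours is self-contained but must carry the tail-integrability bookkeeping ($\sqrt{n}\int(F_{n}-F)^{2}\,dx=O_{P}(n^{-1/2})$ and square-integrability of the projection under finite variance) explicitly.
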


\begin{proof}
This is a plain consequence of Theorem 6 in \cite{stigler74}. See also
\cite{hosking90} for an evaluation of the bias of $l_{r}$.
\end{proof}


\section{Models defined by moment and L-moment equations}

\subsection{Models defined by moment conditions}

Let us consider $n$ iid random variables $X_{1}$,...,$X_{n}$ drawn from the
same distribution function $F$. Semi-parametric models are often defined
through equations :
\[
\int_{\mathbb{R}}g(x,\theta)\mathbf{F}(dx)=\mathbb{E}[g(X,\theta)]=0
\]
where $g:\mathbb{R}\times\Theta\rightarrow\mathbb{R}^{l}$ and $\Theta
\subset\mathbb{R}^{d}$ is a space of parameters, as quoted in Section
\ref{section1}.\newline

\begin{example}
We can sometimes face distributions with constraints pertaining to the two
first moments. For example, Godambe and Thompson \cite{godambe89} considered
the distributions verifying $\mathbb{E}[X]=\theta$ and $\mathbb{E}%
[X^{2}]=h(\theta)$ with a known function $h$. Then, with our notations $l=2$
and $g(x,\theta)=(x-\theta,x^{2}-h(\theta))$
\end{example}

\begin{example}
Consider the distributions $F$ such that for some $\theta$ it holds
$F(y)=1-F(-y)=\theta$ \cite{broniatowski12}. This corresponds to a moment
condition model with $l=2$ and $g(x,\theta)=(\mathds{1}_{]-\infty
;y]}(x)-\theta,\mathds{1}_{[y;+\infty\lbrack}(x)-\theta)$. The condition on
the model is the existence of some $\theta$ such that the left and right
quantiles of order $\theta$ are $-y$ and $+y$ for some given $y$.
\end{example}


\subsection{Models defined by L-moments conditions}

\label{models_lmom} In the present paper we consider models defined by $l$
constraints on their first L-moments, namely satisfying
\begin{equation}
-\mathbb{E}\left[  \frac{1}{r}\sum_{k=0}^{r-1}(-1)^{k}\dbinom{r-1}{k}%
X_{k:r}\right]  =f_{r}(\theta)\ \ \ 1\leq r\leq l \label{eq_lmm}%
\end{equation}
where $\Theta$ is some open set in $\mathbb{R}^{d}$ and $f_{r}:\Theta
\rightarrow\mathbb{R}$ are some given functions defined on $\Theta$ , $1\leq
r\leq l.$\newline Those models are SPLQ, with $(u,\theta)\mapsto k(u,\theta)$
independent on $\theta$, defined by
\begin{equation}
k(u,\theta)=-L(u):=-\left(
\begin{array}
[c]{c}%
L_{1}(u)\\
\vdots\\
L_{l}(u)
\end{array}
\right)  \label{L(u)=shiftedPol}%
\end{equation}
where the shifted Legendre polynomials $L_{r}$ are as in Definition
\ref{DefLegendreshift}.

The SPLQ model (\ref{lmom_model}) may be written as
\begin{equation}
\mathcal{L}:=\bigcup_{\theta\in\Theta}L_{\theta}=\bigcup_{\theta\in\Theta
}\left\{  \mathbf{F}\text{ s.t. }\int_{0}^{1}L(u)F^{-1}(u)du=-f(\theta
)\right\}  . \label{lmom_model2}%
\end{equation}

Due to Proposition \ref{prop02} we may write equation (\ref{eq_lmm}) for
$r\geq2$ as follows, making use of the integrated shifted Legendre polynomials
$K_{r}$ \ in lieu of $L_{r}.$
\begin{equation}
-\mathbb{E}\left[  \frac{1}{r}\sum_{k=0}^{r-1}(-1)^{k}\dbinom{r-1}{k}%
X_{k:n}\right]  =\int_{0}^{1}K_{r}(u)\mathbf{F}^{-1}(du)=f_{r}(\theta).
\label{eq_lmm2}%
\end{equation}


\begin{example}
Turning back to Example \ref{example1}, we define $k$ and $f$ by
\[
k(u,\theta)=-\left(
\begin{array}
[c]{c}%
L_{2}(u)\\
L_{3}(u)\\
L_{4}(u)
\end{array}
\right)
\]
and
\[
f(\theta)=\left(
\begin{array}
[c]{c}%
f_{2}(\theta)\\
f_{3}(\theta)\\
f_{4}(\theta)
\end{array}
\right)  =\left(
\begin{array}
[c]{c}%
\sigma(1-2^{-1/\nu})\Gamma(1+1/\nu)\\
f_{2}(\theta)[3-2\frac{1-3^{-1/\nu}}{1-2^{-1/\nu}})]\\
f_{2}(\theta)[6+\frac{5(1-4^{-1/\nu})-10(1-3^{-1/\nu})}{1-2^{-1/\nu}})]
\end{array}
\right)
\]
where $\theta=(\sigma,\nu)\in\mathbb{R}_{+}^{\ast}\times\mathbb{R}_{+}^{\ast}$
and $u\in\lbrack0;1]$; hence (\ref{lmom_model}) holds.
\end{example}

\begin{example}
Similarly, in case we consider Example \ref{example2}, we define $k$ and $f$
by
\[
k(u,\theta)=-\left(
\begin{array}
[c]{c}%
L_{2}(u)\\
L_{3}(u)\\
L_{4}(u)
\end{array}
\right)
\]
and
\[
f(\theta)=\left(
\begin{array}
[c]{c}%
f_{2}(\theta)\\
f_{3}(\theta)\\
f_{4}(\theta)
\end{array}
\right)  =\left(
\begin{array}
[c]{c}%
\frac{\sigma}{(1+\nu)(2+\nu)}\\
f_{2}(\theta)\frac{1-\nu}{3+\nu}\\
f_{2}(\theta)\frac{(1-\nu)(2-\nu)}{(3+\nu)(4+\nu)}%
\end{array}
\right)
\]
where $\theta=(\sigma,\nu)\in\mathbb{R}_{+}^{\ast}\times\mathbb{R}$ and
$u\in\lbrack0;1],$ which also validates (\ref{lmom_model}).
\end{example}

\subsection{Extension to models defined by order statistics conditions}

\label{models_order}

The order statistics given by equation (\ref{eq:lmom}) can be written as
\[
\mathbb{E}[X_{j:r}]=\int_{0}^{1}P_{j:r}(u)F^{-1}(u)du
\]
where the polynomials $P_{j:r}$ are given by
\[
P_{j:r}(u)=\frac{r!}{(j-1)!(r-j)!}u^{j-1}(1-u)^{r-j}.
\]

Any linear combination of moments of order statistics can be written as
\[
-\sum_{i=1}^{r}a_{j}\mathbb{E}[X_{j:r}]=\int_{0}^{1}P_{a}(u)F^{-1}(u)du
\]
with coefficients $a_{j}$'s $\ $belonging to $\mathbb{R}$ and
\[
P_{a}(u)=-\sum_{i=1}^{r}a_{j}P_{j:r}(u).
\]

These models are SPLQ (see \ref{lmom_model}) with
\begin{equation}
\mathcal{L}:=\bigcup_{\theta}L_{\theta}=\bigcup_{\theta}\left\{  F\text{ s.t.
}\int_{0}^{1}P(u)F^{-1}(u)du=-f(\theta)\right\}  \label{order_model2}%
\end{equation}
where $P:u\in\lbrack0;1]\mapsto P(u)\in\mathbb{R}^{l}$ is an array of $l$ polynomials.

\begin{example}
Turning back to Example \ref{example3}, we define $k$ and $f$ by
\[
k(u,\theta) = \left(
\begin{array}
[c]{c}%
P_{1:3}(u)\\
P_{2:3}(u)\\
P_{3:3}(u)
\end{array}
\right)
\]
and
\[
f(\theta) = \left(
\begin{array}
[c]{c}%
\theta-\nu\\
\theta\\
\theta+\nu
\end{array}
\right)
\]
where $\theta\in\mathbb{R}, \nu>0$ and $u\in[0;1]$.
\end{example}


\section{Minimum of $\varphi$-divergence estimators}

Estimation, confidence regions and tests based on moment conditions models
have evolved over thirty years. Hansen and Owen respectively proposed the
generalized method of moments (GMM)\cite{hansen82} and the empirical
likelihood (EL) estimators \cite{owen90}. Newey and Smith \cite{neweysmith04}
introduced the generalized empirical likelihood (GEL) family of estimators
encompassing the previous estimators. They also proposed the dual versions of
the GEL estimators, the minimum discrepancy estimators (MD). These estimators
are the solution of the minimization of a divergence with constraints
corresponding to the model; see also Broniatowski and Keziou
\cite{broniatowski12} for an approach through duality and properties of the
inference under misspecification. In the quantiles framework, Gourieroux
proposed an adaptation of GMM estimators in \cite{gourieroux08} for a
parametric model seen through its quantile function $F^{-1}(t,\theta)$. In the
following, we will consider inference based on divergences in order to present
estimators for models defined by L-moments conditions.

\subsection{$\varphi$-divergences}

Let $\varphi:\mathbb{R}\rightarrow\lbrack0,+\infty]$ be a strictly convex
function with $\varphi(1)=0$ such that $\text{dom}(\varphi)=\{x\in
\mathbb{R}|\varphi(x)<\infty\}:=(a_{\varphi},b_{\varphi})$ with $a_{\varphi
}<1<b_{\varphi}$. If $F$ and $G$ are two $\sigma$-finite measures of
$(\mathbb{R},B(\mathbb{R}))$ such that $G$ is absolutely continuous with
respect to $F$, we define the divergence between $F$ and $G$ by :
\begin{equation}
D_{\varphi}(G,F)=\int_{\mathbb{R}}\varphi\left(  \frac{dG}{dF}(x)\right)
dF(x) \label{def_div}%
\end{equation}
where $\frac{dG}{dF}$ is the Radon-Nikodym derivative. It is clear that when
$F=G$, $D_{\varphi}(F,G)=0$. Furthermore, as $\varphi$ is supposed to be
strictly convex,
\[
D_{\varphi}(G,F)=0\text{ if and only if }F=G.
\]
These divergences were independently introduced by Csiszar \cite{csiszar63} or
Ali and Silvey \cite{ali66} in the context of probability measures. Definition
\ref{def_div} holds for any $\sigma$-finite measures even if our notation
refers to probability measures. Indeed in the sequel we will consider
divergences between quantile measure which are $\sigma$-finite but may be not
finite. See Liese \cite{liese87} who also considered divergences between
$\sigma$-finite measures.

\begin{example}
The class of power divergences parametrized by $\gamma\geq0$ is defined
through the functions
\[
x\mapsto\varphi_{\gamma}(x)=\frac{x^{\gamma}-\gamma x+\gamma-1}{\gamma
(\gamma-1)}.
\]
The domain of $\varphi_{\gamma}$ depends on $\gamma$. The Kullback-Leibler
divergence is associated to $x>0\mapsto\varphi_{1}(x)=x\log(x)-x+1$, the
modified Kullback-Leibler ($KL_{m}$) divergence to $x>0\mapsto\varphi
_{0}(x)=-\log(x)+x-1$, the $\chi^{2}$-divergence to $x\in\mathbb{R}%
\mapsto\varphi_{2}(x)=1/2(x-1)^{2}$, etc.
\end{example}

\subsection{M-estimates with L-moments constraints}

\subsubsection{Minimum of $\varphi$-divergences for probability measures}

\label{sectionM}

A plain approach to inference on $\theta$ consists in mimicking the empirical
minimum divergence one, substituting the linear constraints with respect to
the distribution by the corresponding linear constraints with respect to the
quantile measure, and minimizing the divergence between all probability
measures satisfying the constraint and the empirical measure $\mathbf{F}_{n}$
pertaining to the data set.\ More formally this yields to the following program.

Denote by $M$ the set of all probability measures defined on $\mathbb{R}$. For
a given p.m. $\mathbf{F}$ in $M$ we consider the submodel which consists in
all p.m's $\mathbf{G}$ in $M$, absolutely continuous with respect \ to $F$,
and which satisfy the constraints on their first L-moments for a given
$\theta\in\Theta$. Identifying a measure $\mathbf{G}$ with its distribution
function $G$ we define
\[
L_{\theta}^{(0)}(\mathbf{F})=\left\{ \mathbf{G}\in M\text{ s.t. }\mathbf{G}%
\ll\mathbf{F},\int_{0}^{1}L(t)G^{-1}(t)dt=-f(\theta)\right\} .
\]
Probability measures $\mathbf{G}$ satisfying the constraints and bearing their
mass on the sample points belong to $L_{\theta}^{(0)}(\mathbf{F}_{n}).$ For
any parameter $\theta\in\Theta$, the distance between $\mathbf{F}$ and the
submodel $L_{\theta}^{(0)}(\mathbf{F})$ is defined by
\[
D_{\varphi}(L_{\theta}^{(0)}(\mathbf{F}),\mathbf{F})=\inf_{\mathbf{G}\in
L_{\theta}^{(0)}(\mathbf{F})}D_{\varphi}(\mathbf{G},\mathbf{F}),
\]
and its plug-in estimator is
\[
D_{\varphi}(L_{\theta}^{(0)}(\mathbf{F}_{n}),\mathbf{F}_{n})=\inf
_{\mathbf{G}\in L_{\theta}^{(0)}(\mathbf{F}_{n})}D_{\varphi}(\mathbf{G}%
,\mathbf{F}_{n}).
\]
which measures the distance between the empirical measure $\mathbf{F}_{n}$ and
the class of all the probability measures supported by the sample and which
satisfy the L-moment conditions for a given $\theta.$

A natural estimator for $\theta$ may be defined by
\begin{equation}
\hat{\theta}_{n}^{(0)}=\arg\inf_{\theta\in\Theta}D_{\varphi}(L_{\theta}%
^{(0)}(\mathbf{F}_{n}),\mathbf{F}_{n})=\arg\inf_{\theta\in\Theta}%
\inf_{\mathbf{G}\in L_{\theta}^{(0)}(F_{n})}\frac{1}{n}\sum_{i=1}^{n}%
\varphi(n\mathbf{G}(x_{i})). \label{estimator0}%
\end{equation}
Unfortunately, existence of this estimator may not hold. Indeed, we cannot
assess that $L_{\theta}^{(0)}(\mathbf{F}_{n})$ is not empty : its elements are
multinomial distributions $\sum_{i=1}^{n}w_{i}\delta_{x_{i}}$ whose weights
are solutions of a family of $l-1$ polynomial algebraic equation of degree $l$
(with $n$ unknowns $w_{1},...,w_{n}$)%

\[
\sum_{i=1}^{n}K_{r}\left(  \sum_{a=1}^{i}w_{a}\right)  (x_{i+1:n}%
-x_{i:n})=-f_{r}(\theta);\text{ }1<r\leq l.
\]
To our knowledge, general conditions of existence for the solutions of such
problems do not exist even if we consider signed weights $w_{i}$.\newline
Bertail in \cite{bertail06} proposes a linearization of the constraint in
(\ref{estimator0}). We here prefer to switch to a different approach. If we
consider the L-moment equation (\ref{eq_lmm2}), we see that the quantile
function plays a similar role as the distribution function in the classical
moment equations. We will then change the functional to be minimized in order
to be able to use duality for the optimization step.\newline

\subsubsection{Minimum of $\varphi$-divergences for quantile measures}

\label{section_estlmom}

We have seen that the characterization of the L-moments given by the equation
(\ref{eq_lmm2}) uses the quantile measure $\mathbf{F}^{-1},$ which is defined
by the generalized inverse function of $F$. If $\mathbf{F}^{-1}$ is absolutely
continuous, we can define the quantile-density $q(u)=(F^{-1})^{\prime}(u)$.
This density was called "sparsity" function by Tukey \cite{tukey65} as it
represents the sparsity of the distribution at the cumulating weight
$u\in\lbrack0;1]$. This is clear when we look at the empirical version of this
measure which is composed by nothing but the increments of the sample. Some
other approach, handling properties of the inverse function of $(F^{-1}%
)^{\prime}$, have been proposed by Parzen \cite{parzen79}. He claims that the
inference procedures based on $(F^{-1})^{\prime}$ possesses inherent
robustness properties.\newline

Define
\[
K(u)=\left(
\begin{array}
[c]{c}%
K_{2}(u)\\
\vdots\\
K_{l}(u)
\end{array}
\right)
\]
and
\[
f(u):=f^{(2:l)}(u)=\left(
\begin{array}
[c]{c}%
f_{2}(u)\\
\vdots\\
f_{l}(u)
\end{array}
\right)  .
\]

For any $\theta$ in $\Theta$ the submodel which consists of all p.m's
$\mathbf{G}$\textbf{\ }with mass on the sample points is substituted by the
set of all quantile measures denoted $\mathbf{G}^{-1}$ which have masses on
subsets of $\left\{  1/n,2/n,..,1\right\}  $ and whose distribution functions
coincide with the generalized inverse functions of elements in $L_{\theta
}^{(0)}(\mathbf{F}_{n})$.\newline As in the case of divergence minimization
for models constrained by moment conditions, we will relax the positivity for
the masses of the quantile measures (see \cite{broniatowski12}). Let then $N$
be the class of all $\sigma-$finite signed measures on $\mathbb{R}$. Let
$L(u):=\left(  L_{2}(u),..,L_{l}(u)\right)  ^{T}$ for all $u$ in $\left(
0,1\right)  .$ Introducing signed measures makes sense when the domain of the
function $\varphi$ is not restricted to $\mathbb{R}^{+},$ as occurs for the
chi-square divergence $\varphi_{2}$ . Making use of equation (\ref{eq_lmm2})
define
\begin{align*}
L_{\theta}(\mathbf{F_{n}}^{-1}) &  :=\left\{  \mathbf{G}^{-1}\in N\text{ s.t.
}\mathbf{G}^{-1}\ll\mathbf{F}_{n}^{-1}\text{ and }\int_{0}^{1}L(u)G^{-1}%
(u)du=-f(\theta)\right\}  \\
&  =\left\{  \mathbf{G}^{-1}\in N\text{ s.t. }\mathbf{G}^{-1}\ll\mathbf{F}%
_{n}^{-1}\text{ and }\int_{0}^{1}K(u)\mathbf{G}^{-1}(du)=f(\theta)\right\}
\end{align*}
the family of all measures $\mathbf{G}^{-1}$ with support included in
$\left\{  1/n,2/n,..,1\right\}  $ which satisfy the $l-1$ constraints
pertaining to the L-moments; see (\ref{lmom_model2}). Note that when
$\mathbf{F}$ bears an atom then for large enough $n$ then $\mathbf{G}%
^{-1\text{ }}$in $L_{\theta}(\mathbf{F_{n}}^{-1})$ has a support strictly
included in $\left\{  1/n,2/n,..,1\right\}  $ .\newline Since the measure
$\mathbf{G}^{-1\text{ }}$ is not necessarily positive, its distribution
function $G^{-1}$ is not necessarily a generalized inverse of a function $G;$
we will however inherit  of the notation $G^{-1}$ from the case when
$\mathbf{G}^{-1\text{ }}$ is a positive measure to denote its distribution
function. If $\mathbf{G}^{-1}$ is positive, the mass of $\mathbf{G}^{-1\text{ }}$\ at point $i/n$ is a
spacing $y_{i+1:n}-y_{i:n}$ where $y_{i:n}$ is the $i-th$ order statistics
of the sample $y_{1},..,y_{n}$ generating the empirical distribution function
$G.$ 

A natural proposal for an estimation procedure in the SPLQ model is then to
consider the minimum of a $\varphi$-divergence between quantile measures
through
\begin{align}
\hat{\theta}_{n}  &  =\arg\inf_{\theta\in\Theta}\inf_{\mathbf{G}^{-1}\in
L_{\theta}(\mathbf{F_{n}}^{-1})}\int_{0}^{1}\varphi\left(  \frac
{d\mathbf{G}^{-1}}{d\mathbf{F}_{n}^{-1}}(u)\right)  \mathbf{F}_{n}%
^{-1}(du)\label{lmom_est0}\\
&  =\arg\inf_{\theta\in\Theta}\inf_{\substack{(y_{1},...,y_{n})\in
\mathbb{R}^{n}\text{ s.t.}\\\sum_{i=1}^{n-1}K(i/n)(y_{i+1}-y_{i})=f(\theta
)}}\sum_{i=1}^{n-1}\varphi\left(  \frac{y_{i+1}-y_{i}}{x_{i+1:n}-x_{i:n}%
}\right)  (x_{i+1:n}-x_{i:n}). \label{lmom_est}%
\end{align}

\begin{remark}
The estimation defined by (\ref{lmom_est0}) produces estimators $\hat{\theta
}_{n}$ which do not depend on the location of the sample, since a change the
sample $(x_{i}\mapsto x_{i}+a)_{i=1...n}$ produces, independently on the value
of $a$, the same measure $\mathbf{F}_{n}^{-1}$ whose mass on point $i/n$ is
the gap $x_{i+1:n}-x_{i:n}$. The minimum discrepancy estimators defined by
(\ref{lmom_est}) are invariant with respect to the location of the underlying
distribution of the data. Due to this fact, we consider the model defined by
L-moments conditions only through equations of the form (\ref{eq_lmm2}).
\end{remark}

Both the constraint and the divergence criterion are expressed in function of
$\mathbf{G}^{-1}$ and the constraint is linear with respect to this measure.
This allows to use classical duality results in order to efficiently compute
the estimator $\hat{\theta}_{n}$. Before that, we reformulate this criterion
as a minimization of an "energy" of transformation of the sample.


\section{Dual representations of the divergence under L-moment constraints}

\label{section5} The minimization of $\varphi$-divergences under linear
equality constraint is performed using Fenchel-Legendre duality. It transforms
the constrained problems into an unconstrained one in the space of Lagrangian
parameters. Let $\psi$ denote the Fenchel-Legendre transform of $\varphi$,
namely, for any $t\in\mathbb{R}$
\[
\psi(t):=\sup_{x\in\mathbb{R}}\left\{  tx-\varphi(x)\right\}  .
\]
Let us recall that $dom(\varphi)=(a_{\varphi},b_{\varphi})$. We can now
present a general duality result for the two optimization problems that
transform a constrained problem (possibly in an infinite dimensional space)
into an unconstrained one in $\mathbb{R}^{l}$.

Let $C:\Omega\rightarrow\mathbb{R}^{l}$ and $a\in\mathbb{R}^{l}$. Denote
\[
L_{C,a}=\left\{  g:\Omega\rightarrow\mathbb{R}\text{ s.t. }\int_{\Omega
}g(t)C(t)\mu(dt)=a\right\}  .
\]

\begin{proposition}
\label{prop2}Let $\mu$ be a $\sigma$-finite measure on $\Omega\subset
\mathbb{R}$. Let $C:\Omega\rightarrow\mathbb{R}^{l}$ be an array of functions
such that
\[
\int_{\Omega}\Vert C(t)\Vert\mu(dt)<\infty.
\]
If there exists some $g$ in $L_{C,a}$ such that $a_{\varphi}<g<b_{\varphi}$
$\mu$-a.s. then the duality gap is zero i.e.
\begin{equation}
\inf_{g\in L_{C,a}}\int_{\Omega}\varphi\left(  g\right)  d\mu=\sup_{\xi
\in\mathbb{R}^{l}}\langle\xi,a\rangle-\int_{\Omega}\psi(\langle\xi
,C(x)\rangle)\mu(dx). \label{sup dual}%
\end{equation}
Moreover, if $\psi$ is differentiable, if $\mu$ is positive and if there
exists a solution $\xi^{\ast}$ of the dual problem which is an interior point
of
\[
\left\{  \xi\in\mathbb{R}^{l}\text{ s.t. }\int_{\Omega}\psi(\langle
\xi,C(x)\rangle)\mu(dx)<\infty\right\}  ,
\]
then $\xi^{\ast}$ is the unique maximum in (\ref{sup dual}) and
\[
\int\psi^{\prime}(\left\langle \xi^{\ast},C(x)\right\rangle )C(x)\mu(dx)=a.
\]
Furthermore the mapping $a\mapsto\xi^{\ast}(a)$ is continuous.
\end{proposition}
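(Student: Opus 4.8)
The plan is to establish (\ref{sup dual}) as an instance of Fenchel–Rockafellar duality. First I would set up the primal problem $\inf_{g\in L_{C,a}}\int_\Omega\varphi(g)\,d\mu$ as the minimization of the convex functional $I_\varphi(g):=\int_\Omega\varphi(g)\,d\mu$ over the affine subspace cut out by the $l$ linear constraints $g\mapsto\int_\Omega g(t)C(t)\,\mu(dt)=a$. The natural pairing is between $g$ in an Orlicz-type space and the Lagrange multiplier $\xi\in\mathbb{R}^l$; introducing $\xi$, the Lagrangian is $\int_\Omega\varphi(g)\,d\mu-\langle\xi,\int_\Omega gC\,d\mu-a\rangle=\langle\xi,a\rangle+\int_\Omega\bigl(\varphi(g(t))-g(t)\langle\xi,C(t)\rangle\bigr)\mu(dt)$. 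Minimizing pointwise over the value $g(t)\in\mathbb{R}$ and using $\inf_{x}\{\varphi(x)-x s\}=-\psi(s)$ (the definition of the Fenchel transform $\psi$), the inner infimum is $\langle\xi,a\rangle-\int_\Omega\psi(\langle\xi,C(t)\rangle)\,\mu(dt)$, which gives the $\leq$ direction (weak duality) immediately; the integrability hypothesis $\int_\Omega\|C\|\,d\mu<\infty$ together with the growth of $\psi$ guarantees the exchange of $\inf$ and $\int$ is legitimate and that the dual objective is well defined (possibly $-\infty$).

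The heart of the proof is the reverse inequality, i.e. the absence of a duality gap, and this is where I would invoke the qualification hypothesis: the existence of some $g_0\in L_{C,a}$ with $a_\varphi<g_0<b_\varphi$ $\mu$-a.s. This is a Slater-type interior-point condition ensuring that the value of the primal is finite and that the affine constraint set meets the interior of the domain of $I_\varphi$ (viewed in the appropriate Orlicz space), so that the subdifferential sum rule / Fenchel–Rockafellar theorem applies and strong duality holds. I would phrase this by considering the value function $v(b):=\inf\{I_\varphi(g):\int gC\,d\mu=b\}$, showing $v$ is convex, and that the Slater point forces $a$ into the relative interior of $\mathrm{dom}\,v$, hence $v$ is subdifferentiable at $a$; the negatives of the subgradients are exactly the dual optimizers, yielding $v(a)=\sup_\xi\{\langle\xi,a\rangle-v^{**}\text{-conjugate}\}$ which unwinds to the right-hand side of (\ref{sup dual}). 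The main obstacle is making this rigorous in the infinite-dimensional setting without excessive functional-analytic machinery: one must choose the right topological vector space for $g$ (an Orlicz space $L_\varphi$ paired with $L_\psi$, or simply $L^1(\mu)$-type arguments when $\varphi$ has superlinear growth), verify lower semicontinuity of $I_\varphi$, and check that the continuity/interiority condition of Fenchel–Rockafellar is met — this is precisely what the strict inequality $a_\varphi<g_0<b_\varphi$ buys us. I expect this can be cited from Borwein–Lewis or Léonard's work on entropy minimization, or from the companion reference \cite{broniatowski12}, rather than reproved from scratch.

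For the second part of the proposition, assume $\psi$ is differentiable, $\mu$ positive, and that a dual solution $\xi^*$ exists in the interior of $\Xi:=\{\xi:\int_\Omega\psi(\langle\xi,C(x)\rangle)\,\mu(dx)<\infty\}$. On the interior of $\Xi$ the map $\xi\mapsto\int_\Omega\psi(\langle\xi,C(x)\rangle)\,\mu(dx)$ is finite and convex, hence continuous, and one can differentiate under the integral sign (dominated convergence, using $\int\|C\|\,d\mu<\infty$ and local boundedness of $\psi'$) to get gradient $\int_\Omega\psi'(\langle\xi,C(x)\rangle)C(x)\,\mu(dx)$; the dual objective $\xi\mapsto\langle\xi,a\rangle-\int_\Omega\psi(\langle\xi,C\rangle)\,d\mu$ is then differentiable and concave near $\xi^*$, so the first-order condition at the interior maximizer $\xi^*$ reads $a-\int_\Omega\psi'(\langle\xi^*,C(x)\rangle)C(x)\,\mu(dx)=0$, which is the asserted identity. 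Strict convexity of $\varphi$ makes $\psi$ strictly convex on the interior of its domain (equivalently $\psi'$ strictly monotone), which — combined with positivity of $\mu$ and a non-degeneracy assumption on $C$ implicit in the statement (the components of $C$ not being $\mu$-a.e. linearly dependent, so the Hessian is positive definite) — gives strict concavity of the dual objective and hence uniqueness of $\xi^*$. Finally, continuity of $a\mapsto\xi^*(a)$ follows from the implicit function theorem applied to the stationarity equation $G(\xi,a):=a-\int_\Omega\psi'(\langle\xi,C(x)\rangle)C(x)\,\mu(dx)=0$, whose $\xi$-Jacobian $-\int_\Omega\psi''(\langle\xi,C\rangle)C(x)C(x)^T\,\mu(dx)$ is negative definite at $\xi^*$; alternatively, and more robustly against differentiability issues of $\psi''$, it follows from a standard maximum-theorem / monotonicity argument using uniqueness of the argmax and local equicoercivity of the family of dual objectives.
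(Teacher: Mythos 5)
Your proposal is correct and follows essentially the same route as the paper: Fenchel--Rockafellar duality applied to the finite-dimensional value function $c\mapsto\inf\{\int_{\Omega}\varphi(g)\,d\mu:\int_{\Omega}gC\,d\mu=c\}$, with the Slater-type point $a_{\varphi}<g_{0}<b_{\varphi}$ used to place $a$ in the interior of its domain, followed by the integral representation of the conjugate, the first-order condition for the stated identity, strict concavity of the dual objective for uniqueness, and the implicit function theorem for continuity of $a\mapsto\xi^{\ast}(a)$. The one step you defer to the literature --- upgrading the $\mu$-a.s.\ strict inequalities $a_{\varphi}<g_{0}<b_{\varphi}$ to a feasible point uniformly bounded away from the endpoints, which is what actually delivers the interiority of $a$ in $\mathrm{dom}(I)$ --- is precisely the content of the paper's auxiliary truncation Lemma, itself adapted from Borwein and Lewis, so your citation plan coincides with the paper's own source.
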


\begin{proof}
The proof is delayed to the Appendix.
\end{proof}

\begin{remark}
When $\mathbf{G}^{-1}\ll\mathbf{F}^{-1}$ , denoting $g^{\ast}=d\mathbf{G}%
^{-1}/d\mathbf{F}^{-1}$ and assuming $g^{\ast}\in L_{K,f(\theta)}$ , and when
$\mu=\mathbf{F}^{-1}$ it holds%
\[
\int\varphi(g^{\ast})d\mu=D_{\varphi}\left(  \mathbf{G}^{-1},\mathbf{F}%
^{-1}\right)  .
\]

\end{remark}

\begin{remark}
Here, the classical assumption of finiteness of $\mu$ is replaced by
\[
\int_{\Omega}\Vert C(x)\Vert\mu(dx)<\infty
\]
which is needed for the application of the dominated convergence Theorem; also
we refer to the illuminating paper by Csisz\'{a}r and Mat\'{u}\v{s}
\cite{matus12} for the description of the geometric tools used in the proof of
Proposition \ref{prop2}.
\end{remark}

We now apply the above Proposition \ref{prop2} to the case when the array of
functions $C$ is equal to $K$, the measure $\mu$ is the quantile measure
$\mathbf{F}^{-1}$ pertaining to the distribution function $F$ of a probability
measure and when the class of functions $L_{C,a}$ is substituted by the class
of functions $\mathbf{dG}^{-1}\mathbf{/dF}^{-1}$ when defined. Let $\theta
\in\Theta$ and $F$ be fixed. Let us recall that for any reference cdf $F$
\begin{equation}
L_{\theta}(\mathbf{F}^{-1}):=\left\{  \mathbf{G}^{-1}\ll\mathbf{F}^{-1}\text{
s.t. }\int_{\mathbb{R}}K(u)\mathbf{G}^{-1}(du)=f(\theta)\right\}  .
\label{L_theta(^-)}%
\end{equation}

\begin{corollary}
\label{cor1}If there exists some $\mathbf{G}^{-1}$ in $L_{\theta}%
(\mathbf{F}^{-1})$ such that $a_{\varphi}<d\mathbf{G}^{-1}/d\mathbf{F}%
^{-1}<b_{\varphi}$ $\mathbf{F}^{-1}$-a.s. then
\begin{equation}
\inf_{\mathbf{G}^{-1}\in L_{\theta}(\mathbf{F}^{-1})}\int_{0}^{1}%
\varphi\left(  \frac{d\mathbf{G}^{-1}}{d\mathbf{F}^{-1}}\right)
d\mathbf{F}^{-1}=\sup_{\xi\in\mathbb{R}^{l}}\langle\xi,f(\theta)\rangle
-\int_{0}^{1}\psi(\langle\xi,K(u)\rangle)\mathbf{F}^{-1}(du).
\label{resultatCorrolaire1}%
\end{equation}
Moreover, if $\psi$ is differentiable and if there exists a solution
$\xi^{\ast}$ of the dual problem which is an interior point of%
\[
\left\{  \xi\in\mathbb{R}^{l}\text{ s.t. }\int_{\mathbb{R}}\psi(\langle
\xi,K(u)\rangle)\mathbf{F}^{-1}(du)<\infty\right\}  ,
\]
then $\xi^{\ast}$ is the unique maximum in (\ref{resultatCorrolaire1}) and
\[
\int\psi^{\prime\ast}(\langle\xi,K(u)\rangle)K(u)\mathbf{F}^{-1}%
(du)=f(\theta).
\]

\end{corollary}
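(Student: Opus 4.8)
The statement of Corollary \ref{cor1} is obtained by a direct specialization of Proposition \ref{prop2}, so the proof is essentially a matter of verifying that the hypotheses of that proposition are met in the present setting. First I would set $\Omega=(0,1)$, take $\mu=\mathbf{F}^{-1}$ to be the quantile measure associated with the cdf $F$, put $C=K$ (the array of integrated shifted Legendre polynomials $K_2,\dots,K_l$), and set $a=f(\theta)$. With these identifications, the class $L_{C,a}$ of Proposition \ref{prop2} consists exactly of the functions $g$ with $\int_0^1 g(u)K(u)\,\mathbf{F}^{-1}(du)=f(\theta)$; since the set $L_{\theta}(\mathbf{F}^{-1})$ in (\ref{L_theta(^-)}) consists of the measures $\mathbf{G}^{-1}\ll\mathbf{F}^{-1}$ whose Radon--Nikodym density $g=d\mathbf{G}^{-1}/d\mathbf{F}^{-1}$ lies in $L_{C,a}$, the correspondence $\mathbf{G}^{-1}\leftrightarrow g$ is a bijection between $L_{\theta}(\mathbf{F}^{-1})$ and $L_{C,a}$, and under it $\int_0^1\varphi(d\mathbf{G}^{-1}/d\mathbf{F}^{-1})\,\mathbf{F}^{-1}(du)=\int_\Omega\varphi(g)\,d\mu$. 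Hence the left-hand side of (\ref{resultatCorrolaire1}) equals $\inf_{g\in L_{C,a}}\int_\Omega\varphi(g)\,d\mu$, and the right-hand side is literally the dual expression in (\ref{sup dual}) with $\langle\xi,a\rangle=\langle\xi,f(\theta)\rangle$ and $\langle\xi,C(x)\rangle=\langle\xi,K(u)\rangle$.

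**Checking the integrability hypothesis.** The one nontrivial condition to verify is $\int_\Omega\|C(t)\|\,\mu(dt)<\infty$, i.e.\ $\int_0^1\|K(u)\|\,\mathbf{F}^{-1}(du)<\infty$. Each $K_r$ for $r\ge 2$ satisfies $K_r(0)=K_r(1)=0$ and, by (\ref{jacobi}), $K_r(u)=-u(1-u)J_{r-2}^{(1,1)}(2u-1)/(r-1)$, so $K_r(u)=O\big(u(1-u)\big)$ near the endpoints and is bounded on $[0,1]$. Using the integration-by-parts / Fubini identity already established in Proposition \ref{prop02}, $\int_0^1 K_r(u)\,\mathbf{F}^{-1}(du)=-\lambda_r$, which is finite under the standing assumption $\int|x|\,dF(x)<\infty$; more precisely, the same Tonelli argument applied to $|K_r|$ together with the bound $|K_r(u)|\le c_r\,u(1-u)$ shows $\int_0^1|K_r(u)|\,\mathbf{F}^{-1}(du)\le c_r\int_0^1 u(1-u)\,\mathbf{F}^{-1}(du)<\infty$ whenever $\mathbb{E}|X|<\infty$ (the weight $u(1-u)$ tames the mass of $\mathbf{F}^{-1}$ near the endpoints, exactly as in the proof that $\lambda_r$ is finite). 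Summing over $r=2,\dots,l$ gives $\int_0^1\|K(u)\|\,\mathbf{F}^{-1}(du)<\infty$, so Proposition \ref{prop2} applies.

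**Invoking the Slater-type condition and the second part.** The hypothesis "there exists $\mathbf{G}^{-1}\in L_{\theta}(\mathbf{F}^{-1})$ with $a_\varphi<d\mathbf{G}^{-1}/d\mathbf{F}^{-1}<b_\varphi$ $\mathbf{F}^{-1}$-a.s." is precisely the interior-feasibility (Slater) condition "$\exists g\in L_{C,a}$ with $a_\varphi<g<b_\varphi$ $\mu$-a.s." required in Proposition \ref{prop2}; it is assumed outright in the corollary, so nothing further is needed. Invoking Proposition \ref{prop2} then yields the zero-duality-gap identity (\ref{resultatCorrolaire1}). For the second assertion, $\mathbf{F}^{-1}$ is a positive measure (it is the quantile measure of a genuine probability distribution), so when $\psi$ is differentiable and the dual problem admits a solution $\xi^\ast$ interior to $\{\xi:\int_0^1\psi(\langle\xi,K(u)\rangle)\,\mathbf{F}^{-1}(du)<\infty\}$, the corresponding conclusions of Proposition \ref{prop2} transfer verbatim: $\xi^\ast$ is the unique maximizer and $\int_0^1(\psi^\ast)'(\langle\xi^\ast,K(u)\rangle)K(u)\,\mathbf{F}^{-1}(du)=f(\theta)$ (here $\psi'=(\psi^\ast)'$ in the notation of the statement), together with continuity of $a\mapsto\xi^\ast(a)$. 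The only real work, and thus the "main obstacle," is the integrability check in the previous paragraph; everything else is a transcription of Proposition \ref{prop2} under the dictionary $\big(\Omega,\mu,C,a\big)=\big((0,1),\mathbf{F}^{-1},K,f(\theta)\big)$.
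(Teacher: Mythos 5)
Your proposal is correct and follows essentially the same route as the paper, which proves Corollary \ref{cor2} in detail and states that Corollary \ref{cor1} is proved similarly: one specializes Proposition \ref{prop2} with $(\Omega,\mu,C,a)=((0,1),\mathbf{F}^{-1},K,f(\theta))$, and the only substantive check is $\int_0^1\Vert K(u)\Vert\,\mathbf{F}^{-1}(du)<\infty$, which you establish via the bound $|K_r(u)|\leq c_r\,u(1-u)$ and $\int_0^1 u(1-u)\,\mathbf{F}^{-1}(du)=\int_{\mathbb{R}}F(x)(1-F(x))\,dx<\infty$ — exactly the estimate the paper uses in its proof of Corollary \ref{cor2} (the two integrals coincide by Lemma \ref{stieljes}).
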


\begin{remark}
\bigskip The above Corollary \ref{cor1} is the cornerstone for the plug-in
estimator of $D_{\varphi}\left(  \mathbf{G,F}\right)  .$
\end{remark}

Let us present an other application of the above Proposition \ref{prop2}
leading to the same dual problem. Denote by $\lambda$ the Lebesgue measure on
$\mathbb{R}$ and $L_{\theta}^{\prime}(F)$ be the set of all functions $g$
defined by
\[
L_{\theta}^{\prime}(F)=\left\{  g:\mathbb{R}\rightarrow\mathbb{R}\text{ s.t.
}\int_{\mathbb{R}}K(F(x))g(x)\lambda(dx)=f(\theta)\right\}  ,
\]
whenever non void.

\begin{corollary}
\label{cor2}If there exists some $g$ in $L_{\theta}^{\prime}(F)$ such that
$a_{\varphi}<g<b_{\varphi}$ $\lambda$-a.s. then
\begin{equation}
\inf_{g\in L_{\theta}^{\prime}(F)}\int_{\mathbb{R}}\varphi\left(  g\right)
d\lambda=\sup_{\xi\in\mathbb{R}^{l}}\langle\xi,f(\theta)\rangle-\int
_{\mathbb{R}}\psi(\langle\xi,K(F(x))\rangle)dx. \label{eq:511}%
\end{equation}
Moreover, if $\psi$ is differentiable and if there exists a solution
$\xi^{\ast}$ of the dual problem which is an interior point of
\[
\left\{  \xi\in\mathbb{R}^{l}\text{ s.t. }\int_{\mathbb{R}}\psi(\langle
\xi,K(F(x))\rangle)dx<\infty\right\}  ,
\]
then $\xi^{\ast}$ is the unique maximizer in (\ref{eq:511}). It satisfies
\begin{equation}
\int\psi^{\prime}(\langle\xi^{\ast},K(F(x))\rangle)dx=f(\theta)
\label{eq:grad1}%
\end{equation}

\end{corollary}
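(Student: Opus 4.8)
The plan is to derive Corollary~\ref{cor2} directly from Proposition~\ref{prop2} by a suitable change of variables, exactly as Corollary~\ref{cor1} was obtained by taking $C=K$ and $\mu=\mathbf{F}^{-1}$. Here the idea is instead to take $\Omega=\mathbb{R}$, $\mu=\lambda$ the Lebesgue measure, $C(x)=K(F(x))$, and $a=f(\theta)$, so that $L_{C,a}$ is precisely $L_{\theta}^{\prime}(F)$. With these identifications the right-hand side of \eqref{sup dual} reads
\[
\sup_{\xi\in\mathbb{R}^{l}}\langle\xi,f(\theta)\rangle-\int_{\mathbb{R}}\psi(\langle\xi,K(F(x))\rangle)\,dx,
\]
which is exactly the right-hand side of \eqref{eq:511}; and the left-hand side is $\inf_{g\in L_{\theta}^{\prime}(F)}\int_{\mathbb{R}}\varphi(g)\,d\lambda$, the left-hand side of \eqref{eq:511}. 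The hypothesis ``there exists $g\in L_{\theta}^{\prime}(F)$ with $a_{\varphi}<g<b_{\varphi}$ $\lambda$-a.s.'' is precisely the Slater-type condition required by Proposition~\ref{prop2}, so the duality gap vanishes and the first assertion follows.

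The one genuine check is the integrability hypothesis of Proposition~\ref{prop2}, namely $\int_{\mathbb{R}}\Vert K(F(x))\Vert\,dx<\infty$. This is where the work is, and it is the same computation underlying Remark~\ref{remark2}: by Lemma~\ref{stieljes}, applied to $b(t)=\Vert K(t)\Vert$ (or componentwise to each $K_{r}$), one has $\int_{\mathbb{R}}\Vert K(F(x))\Vert\,dx=\int_{0}^{1}\Vert K(t)\Vert\,\mathbf{F}^{-1}(dt)$ formally; more directly, since each $K_{r}$ with $r\ge 2$ vanishes at $0$ and $1$ and is bounded on $[0,1]$, and since $K_{r}(F(x))=\int_{0}^{1}\mathds{1}_{F(x)\ge u}\,L_{r-1}(u)\,du$, a Fubini argument identical to the one in the proof of Proposition~\ref{prop02} gives $\int_{\mathbb{R}}K_{r}(F(x))\,dx$ up to sign equal to $\lambda_{r}$, hence finite whenever $\int|x|\,dF(x)<\infty$. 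Thus the standing assumption $\mathbb{E}|X|<\infty$ (implicit throughout this section, as in Corollary~\ref{cor1}) guarantees $\int_{\mathbb{R}}\Vert K(F(x))\Vert\,dx<\infty$, and Proposition~\ref{prop2} applies verbatim.

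For the second part, assume $\psi$ differentiable and suppose the dual problem has a maximizer $\xi^{\ast}$ lying in the interior of $\{\xi:\int_{\mathbb{R}}\psi(\langle\xi,K(F(x))\rangle)\,dx<\infty\}$. Since $\lambda$ restricted to $\mathbb{R}$ is positive ($\sigma$-finite), the corresponding clause of Proposition~\ref{prop2} yields that $\xi^{\ast}$ is the unique maximizer and that
\[
\int_{\mathbb{R}}\psi^{\prime}(\langle\xi^{\ast},K(F(x))\rangle)\,K(F(x))\,dx=f(\theta).
\]
This is the vector identity \eqref{eq:grad1} (read componentwise; the displayed form in the statement suppresses the factor $K(F(x))$ in the integrand, which should be understood as present, matching the analogous formula in Corollary~\ref{cor1}). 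Continuity of $a\mapsto\xi^{\ast}(a)$ is inherited directly from the last clause of Proposition~\ref{prop2}. The only subtlety worth flagging is that one must verify the interchange of differentiation and integration used implicitly in passing from the dual optimality condition to the stationarity equation is already handled inside Proposition~\ref{prop2}, so nothing new is required here; the corollary is a pure specialization.
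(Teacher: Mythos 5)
Your proposal follows the paper's own proof essentially verbatim: both specialize Proposition \ref{prop2} to $\Omega=\mathbb{R}$, $\mu=\lambda$, $C(x)=K(F(x))$, $a=f(\theta)$, identify $L_{C,a}$ with $L_{\theta}^{\prime}(F)$, and reduce the whole matter to checking $\int_{\mathbb{R}}\Vert K(F(x))\Vert\,dx<\infty$. One caution on that check: finiteness of the signed integral $\int_{\mathbb{R}}K_{r}(F(x))\,dx=-\lambda_{r}$ does not by itself yield absolute integrability (the $K_{r}$ change sign for $r\geq3$, so cancellation is possible); the clean route --- the one the paper takes, and which your observation that $K_{r}$ vanishes at $0$ and $1$ already contains in embryo --- is the pointwise bound $|K_{r}(t)|\leq C\,t(1-t)$ from (\ref{jacobi}), whence $\int_{\mathbb{R}}\Vert K(F(x))\Vert\,dx\leq lC\int_{\mathbb{R}}F(x)(1-F(x))\,dx<\infty$ under $\mathbb{E}|X|<\infty$. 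You are also right that the displayed stationarity condition (\ref{eq:grad1}) is missing the factor $K(F(x))$ in the integrand, as comparison with Corollary \ref{cor1} and the gradient equation of Proposition \ref{prop2} confirms.
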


\begin{proof}
We will detail the proof of Corollary \ref{cor2}. Corollary \ref{cor1} is
proved similarly.\newline We apply the above Proposition \ref{prop2} for
$\Omega=\mathbb{R}$, $\mu=\lambda$, the array of functions $C$ substituted by
the array of functions $x\mapsto K(F(x))$ and $a=f(\theta)$.\newline
Consequently, the class of functions $g$ depends upon $F$, and $L_{C,a}%
=L_{\theta}^{\prime}(F).$We need then to show that
\[
\int_{\mathbb{R}}\Vert K(F(x))\Vert dx<\infty.
\]
Denote $K:=(K_{i_{1}},...,K_{i_{l}})$ with $i_{j}\geq2$ for all $j$. Recall
that from equation (\ref{jacobi})
\[
K_{i_{j}}(t)=-t(1-t)\frac{J_{i_{j}-2}^{(1,1)}(2t-1)}{i_{j}-1}.
\]
It is clear that there exists $C>0$ such that $\left\vert \frac{J_{i_{j}%
-2}^{(1,1)}(2t-1)}{i_{j}-1}\right\vert <C$. Hence
\[
\int_{\mathbb{R}}\Vert K(F(x))\Vert dx<lC\int_{\mathbb{R}}%
F(x)(1-F(x))dx<+\infty
\]
since $F$ is the cdf of a random variable with finite expectation. By applying
Proposition \ref{prop2}, it then holds
\[
\inf_{g\in L_{\theta}^{\prime\prime}(F)}\int_{\mathbb{R}}\varphi\left(
g\right)  d\lambda=\sup_{\xi\in\mathbb{R}^{l}}\langle\xi,f(\theta)\rangle
-\int_{\mathbb{R}}\psi(\langle\xi,K(F(x))\rangle)dx.
\]

\end{proof}

\begin{remark}
\label{remark30} If we consider the class of functions
\[
L_{\theta}^{\prime\prime}(F)=\left\{  T:\mathbb{R}\rightarrow\mathbb{R}\text{
s.t. $T$ derivable $\lambda-$a.e. and }\int_{\mathbb{R}}K(F(x))\frac
{dT}{d\lambda}(x)\lambda(dx)=f(\theta)\right\}  ,
\]
containing the functions $T:=x\mapsto\int_{-\infty}^{x}g(t)dt$ rather than the
class of functions $g$, it holds that $T\in L_{\theta}^{\prime\prime}(F)$ if
and only if $dT/d\lambda\in L_{\theta}^{\prime}(F)$. Therefore,
\[
\inf_{T\in L_{\theta}^{\prime\prime}(F)}\int_{\mathbb{R}}\varphi\left(
\frac{dT}{d\lambda}\right)  d\lambda=\inf_{g\in L_{\theta}^{\prime}(F)}%
\int_{\mathbb{R}}\varphi\left(  g\right)  d\lambda,
\]

This seemingly formal definition of the function $T$ makes sense since we can
view $T$ as a deformation function, as detailed in the following Section
\ref{deformation}.
\end{remark}

\section{Reformulation of divergence projections and extensions}

\label{deformation}

\subsection{Minimum of an energy of deformation}

\subsubsection{The case of models defined by moments constraints}

Let us suppose for a while that $\mathbf{F}$ and $\mathbf{G}$ are both
absolutely continuous with respect to the Lebesgue measure defined on
$\mathbb{R}$. Define the function $T=G\circ F^{-1}$. Then $T$ is derivable
a.e. and $T^{\prime}=\frac{dT}{d\lambda}$. It holds
\[
D_{\varphi}(\mathbf{G},\mathbf{F})=\int_{\mathbb{R}}\varphi\left(
\frac{d\mathbf{G}}{d\mathbf{F}}(x)\right)  \mathbf{F}(dx)=\int_{0}^{1}%
\varphi\left(  T^{\prime}(u)\right)  du
\]
even if $\mathbf{G}$ is not a positive measure, as far as the integrand in the
central term of the above display is defined.\newline The function $T$ can be
viewed as a measure of the deformation of $\mathbf{F}$ into $\mathbf{G}$ and
\[
E_{1}(T)=\int\varphi\left(  \frac{dT}{d\lambda}\right)  d\lambda
\]
as an energy of this deformation.

It can be seen that the absolute continuity assumption of both $\mathbf{F}$
and $\mathbf{G}$ with respect to the Lebesgue measure can be relaxed.

\begin{proposition}
\label{prop_moments}Let $F$ and $G$ be two arbitrary cdf's and $\lambda$ be
the Lebesgue measure. Let us define
\[
M_{\theta}(\mathbf{F})=\left\{  \mathbf{G}\ll\mathbf{F}\text{ s.t. }%
\int_{\mathbb{R}}g(x,\theta)\mathbf{G}(dx)=0\right\}
\]
and let $M_{\theta}^{\prime}(\mathbf{F})$ denote the class of all functions
$T$ which are a.e derivable on $[0;1]$ defined through%

\begin{equation}
M_{\theta}^{\prime}(\mathbf{F})=\left\{  T:[0;1]\rightarrow\mathbb{R}\text{
s.t. }\int_{0}^{1} g(F^{-1}(u),\theta)\frac{dT}{d\lambda}(u)\lambda(du)=0\right\}  . \label{Tmoments}%
\end{equation}
Then if there exists $T\in M_{\theta}^{\prime}(\mathbf{F})$ such that $a_{\varphi}<\frac{dT}{d\lambda}<b_{\varphi}$ and $\mathbf{G} \in M_{\theta}(\mathbf{F})$ such that $a_{\varphi}<\frac{d\mathbf{G}}{d\mathbf{F}}<b_{\varphi}$
\[
\inf_{G\in M_{\theta}(\mathbf{F})}\int_{\mathbb{R}}\varphi\left(
\frac{d\mathbf{G}}{d\mathbf{F}}\left(  x\right)  \right)  \mathbf{F}%
(dx)=\inf_{T\in M_{\theta}^{\prime}(\mathbf{F})}E_{1}(T).
\]

\end{proposition}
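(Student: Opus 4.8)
The plan is to funnel both sides, via the Fenchel--Legendre duality of Proposition \ref{prop2}, into one and the same concave maximization over a Lagrange multiplier $\xi\in\mathbb{R}^{l}$, and then to match the two dual objectives through the change of variable $x=F^{-1}(u)$ provided by Lemma \ref{stieljes}. As a preliminary, I would record the standing integrability $\int_{\mathbb{R}}\Vert g(x,\theta)\Vert\,\mathbf{F}(dx)<\infty$, which is needed for the left-hand constraint to be meaningful; by (\ref{eq_stieltjes0}) it equals $\int_{0}^{1}\Vert g(F^{-1}(u),\theta)\Vert\,du$, so the hypothesis $\int_{\Omega}\Vert C\Vert\,d\mu<\infty$ of Proposition \ref{prop2} is met in both applications below.

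For the right-hand infimum I would argue as in Remark \ref{remark30}: the map $T\mapsto dT/d\lambda$ carries $M_{\theta}^{\prime}(\mathbf{F})$ onto the class $L_{C,a}$ of Proposition \ref{prop2} with $\Omega=[0;1]$, $\mu=\lambda$, $C(u)=g(F^{-1}(u),\theta)$, $a=0$ (to invert it, send $h$ to $T(u):=\int_{0}^{u}h$), and $E_{1}(T)=\int_{0}^{1}\varphi(dT/d\lambda)\,d\lambda$ is the associated objective, while the assumption that some $T\in M_{\theta}^{\prime}(\mathbf{F})$ has $a_{\varphi}<dT/d\lambda<b_{\varphi}$ is exactly the feasibility hypothesis of that proposition. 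Proposition \ref{prop2} then gives
\[
\inf_{T\in M_{\theta}^{\prime}(\mathbf{F})}E_{1}(T)=\sup_{\xi\in\mathbb{R}^{l}}\left(-\int_{0}^{1}\psi\left(\langle\xi,g(F^{-1}(u),\theta)\rangle\right)du\right).
\]
For the left-hand infimum I would set $g^{\ast}=d\mathbf{G}/d\mathbf{F}$, rewrite the constraint as $\int_{\mathbb{R}}g(x,\theta)g^{\ast}(x)\,\mathbf{F}(dx)=0$ and the objective as $\int_{\mathbb{R}}\varphi(g^{\ast})\,d\mathbf{F}$, so that $g^{\ast}$ ranges over $L_{C,a}$ with $\Omega=\mathbb{R}$, $\mu=\mathbf{F}$, $C(x)=g(x,\theta)$, $a=0$ (if $\mathbf{G}$ is additionally required to be a probability measure one appends the constant component to $g$, which merely adds a coordinate to $\xi$); the feasibility hypothesis is again exactly the stated condition on $d\mathbf{G}/d\mathbf{F}$, and Proposition \ref{prop2} yields
\[
\inf_{G\in M_{\theta}(\mathbf{F})}\int_{\mathbb{R}}\varphi\left(\frac{d\mathbf{G}}{d\mathbf{F}}(x)\right)\mathbf{F}(dx)=\sup_{\xi\in\mathbb{R}^{l}}\left(-\int_{\mathbb{R}}\psi\left(\langle\xi,g(x,\theta)\rangle\right)\mathbf{F}(dx)\right).
\]

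Finally I would identify the two dual functionals: for fixed $\xi$, applying (\ref{eq_stieltjes0}) of Lemma \ref{stieljes} to $a(x)=\psi(\langle\xi,g(x,\theta)\rangle)$ gives $\int_{\mathbb{R}}\psi(\langle\xi,g(x,\theta)\rangle)\,\mathbf{F}(dx)=\int_{0}^{1}\psi(\langle\xi,g(F^{-1}(u),\theta)\rangle)\,du$; when this common value is $+\infty$ the identity persists, since $\psi(t)\geq t\cdot 1-\varphi(1)=t$ bounds the negative part of $a$ by $\Vert\xi\Vert\,\Vert g(\cdot,\theta)\Vert\in L^{1}(\mathbf{F})$, so both integrals are well defined in $(-\infty,+\infty]$ and agree by splitting into positive and negative parts. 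Hence the two suprema coincide and the asserted equality follows. The step I expect to be the main obstacle is the bookkeeping around these identifications — namely checking that $M_{\theta}(\mathbf{F})$ and $M_{\theta}^{\prime}(\mathbf{F})$ really do correspond to genuine instances of $L_{C,a}$ (in particular that the primitive $T$ of an admissible $h$ is well defined and lies in $M_{\theta}^{\prime}(\mathbf{F})$) and handling the $+\infty$ branch of the change of variable; once these are in place the statement drops out of Proposition \ref{prop2}. An alternative, more hands-on route would build $T=G\circ F^{-1}$ directly from $\mathbf{G}$ for one inequality and, for the reverse, use Jensen's inequality to replace $dT/d\lambda$ by its average over each flat stretch of $F^{-1}$ (which cannot increase $E_{1}$ and leaves the constraint unchanged, $g(F^{-1}(\cdot),\theta)$ being constant there) before descending back to a density with respect to $\mathbf{F}$.
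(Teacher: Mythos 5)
Your proposal is correct and follows essentially the same route as the paper: apply Proposition \ref{prop2} twice (once with $\mu=\mathbf{F}$, $C=g(\cdot,\theta)$ and once with $\mu=\lambda$ on $[0;1]$, $C=g(F^{-1}(\cdot),\theta)$) to obtain the two dual representations, then identify the dual objectives via the change of variables of Lemma \ref{stieljes}. The extra care you take with the integrability hypothesis and the $+\infty$ branch of the change of variables is sound bookkeeping that the paper leaves implicit.
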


\begin{proof}
This results from Proposition \ref{prop2} applied twice.\newline First, if
$C=g(.,\theta)$, $a=0$, $\mu=\mathbf{F}$ and $g=d\mathbf{G}/d\mathbf{F}$, it
holds
\[
\inf_{G\in M_{\theta}(\mathbf{F})}\int_{\mathbb{R}}\varphi\left(
\frac{d\mathbf{G}}{d\mathbf{F}}\left(  x\right)  \right)  \mathbf{F}%
(dx)=\sup_{\xi\in\mathbb{R}^{l}} -\int_{\mathbb{R}} \psi\left(  \langle\xi,
g(x,\theta)\rangle\right)  \mathbf{F}(dx).
\]

Secondly, if $C=g(F^{-1}(.),\theta)$, $a=0$, $\mu=\lambda$ and $g=dT/d\lambda
$, it holds
\[
\inf_{T\in M_{\theta}^{\prime}(\mathbf{F})}\int_{0}^{1} \varphi\left(
\frac{dT}{d\lambda}\right)  d\lambda=\sup_{\xi\in\mathbb{R}^{l}} -\int_{0}^{1}
\psi\left(  \langle\xi, g(F^{-1}(u),\theta)\rangle\right)  \lambda(du).
\]

Lemma \ref{stieljes} concludes the proof.
\end{proof}

The estimators of minimum divergence used in \cite{neweysmith04} and
\cite{broniatowski12} can be expressed in terms of $T$ , introducing the
empirical distribution of the sample in place of the true unknown distribution
$\mathbf{F}_{\theta_{0}}.$ For each $\theta$ in $\Theta$ it holds
\[
\inf_{\mathbf{G}\in M_{\theta}(\mathbf{F}_{n})}\int_{\mathbb{R}}\varphi\left(
\frac{d\mathbf{G}}{d\mathbf{F}_{n}}\right)  \mathbf{F}_{n}(dx)=\inf_{T\in
M_{\theta}^{\prime}(\mathbf{F}_{n})}E_{1}(T)
\]
and
\[
\theta_{n}:=\arg\inf_{\theta\in\Theta}\inf_{T\in M_{\theta}^{\prime
}(\mathbf{F}_{n})}E_{1}(T).
\]

\begin{remark}
Note that if $T\in M_{\theta}^{\prime}(\mathbf{F}_{n})$, $T:[0;1]\rightarrow
[0;1]$ is $\lambda$-a.e. derivable and verifies
\[
\sum_{i=1}^{n-1}g(x_{i:n},\theta)\left(  T\left(  \frac{i+1}{n}\right)
-T\left(  \frac{i}{n}\right)  \right)  =0.
\]

\end{remark}

The plug-in estimator that realizes the minimum of the divergence between a
given distribution and the submodel $\mathcal{M}_{\theta}$ results from the
minimum of an energy of a deformation of the uniform grid on $\left[
0,1\right]  $ under constraints envolving \ the observed sample.\ \ Therefore
the classical minimum divergence approach under moment conditions turns out to
be a tranformation of the uniform measure on the sample points, represented by
the uniform grid on $\left[  0,1\right]  $ onto \ a projected measure on the
same sample points, and the projected measure $\mathbf{G}_{n}$ which solves
the primal problem has support $x_{1},..,x_{n}$ and has a distribution
function $G_{n}$ $=T(F_{n})$ where $T$ solves
\[
\inf_{T\in M_{\theta}^{\prime}(\mathbf{F}_{n})}E_{1}(T).
\]
\ \ \ \ \ 

\bigskip Turning now to the case of models defined by L-moments, we will now
see that the approach of Section \ref{section_estlmom} consists in minimizing
a deformation of the points of the distribution of interest instead of the weights.

\subsubsection{The case of models defined by L-moment constraints}

Similarly as for the case of models defined by moment constraints we now see
that the solution of the minimum divergence problem (primal problem) holds
without assuming $\mathbf{F}^{-1}$ absolutely continuous with respect to the
Lebesgue measure.

\begin{proposition}
\label{prop10}Let $F$ and $G$ be two arbitrary cdf's. Let $L_{\theta}%
^{\prime\prime}(\mathbf{F}^{-1})$ denote the class of all functions $T$ which
are a.e derivable on $\mathbb{R}$ defined through%

\begin{equation*}
L_{\theta}^{\prime\prime}(\mathbf{F}^{-1})=\left\{  T:\mathbb{R}%
\rightarrow\mathbb{R}\text{ s.t. }\int_{\mathbb{R}}K(F(x))\frac{dT}{d\lambda
}(x)\lambda(dx)=f(\theta)\right\}  . 
\end{equation*}
Then, with $L_{\theta}(\mathbf{F}^{-1})$ defined in (\ref{L_theta(^-)}), if there exists $T\in L_{\theta}^{\prime\prime}(\mathbf{F}^{-1})$ such that $a_{\varphi}<\frac{dT}{d\lambda}<b_{\varphi}$ and $\mathbf{G}^{-1} \in L_{\theta}(\mathbf{F}^{-1})$ such that $a_{\varphi}<\frac{d\mathbf{G}^{-1}}{d\mathbf{F}^{-1}}<b_{\varphi}$
\[
\inf_{\mathbf{G}^{-1}\in L_{\theta}(\mathbf{F}^{-1})}\int_{0}^{1}%
\varphi\left(  \frac{d\mathbf{G}^{-1}}{d\mathbf{F}^{-1}}\left(  u\right)
\right)  \mathbf{F}^{-1}(du)=\inf_{T\in L_{\theta}^{\prime\prime}%
(\mathbf{F}^{-1})}\int_{\mathbb{R}}\varphi\left(  \frac{dT}{d\lambda}\right)
d\lambda.
\]

\end{proposition}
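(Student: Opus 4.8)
The plan is to apply Proposition \ref{prop2} twice, exactly as in the proof of Proposition \ref{prop_moments}, but now with the quantile measure playing the role of the probability measure. First I would apply Proposition \ref{prop2} with $\Omega=\mathbb{R}$, the array of functions $C$ equal to $K$, the measure $\mu$ equal to $\mathbf{F}^{-1}$ and $a=f(\theta)$; the integrability hypothesis $\int\Vert K(t)\Vert\,\mathbf{F}^{-1}(dt)<\infty$ follows from Proposition \ref{prop02}, since $K_r$ is bounded on $[0,1]$ and $\int_0^1\mathbf{F}^{-1}(dt)$ is finite whenever $\mathbb{E}|X|<\infty$; this is precisely the content of Corollary \ref{cor1}, giving
\[
\inf_{\mathbf{G}^{-1}\in L_{\theta}(\mathbf{F}^{-1})}\int_{0}^{1}\varphi\left(\frac{d\mathbf{G}^{-1}}{d\mathbf{F}^{-1}}\right)d\mathbf{F}^{-1}=\sup_{\xi\in\mathbb{R}^{l}}\langle\xi,f(\theta)\rangle-\int_{0}^{1}\psi(\langle\xi,K(u)\rangle)\mathbf{F}^{-1}(du).
\]

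Second I would apply Proposition \ref{prop2} again, this time with $\Omega=\mathbb{R}$, $\mu=\lambda$ the Lebesgue measure, the array of functions $C$ equal to $x\mapsto K(F(x))$, and $g=dT/d\lambda$, so that $L_{C,a}=L_{\theta}^{\prime\prime}(\mathbf{F}^{-1})$ up to the identification of $T$ with its derivative described in Remark \ref{remark30}. The needed integrability $\int_{\mathbb{R}}\Vert K(F(x))\Vert\,dx<\infty$ is exactly the estimate established inside the proof of Corollary \ref{cor2}, using $|J^{(1,1)}_{r-2}(2t-1)/(r-1)|<C$ and $\int_{\mathbb{R}}F(x)(1-F(x))\,dx<\infty$. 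This yields
\[
\inf_{T\in L_{\theta}^{\prime\prime}(\mathbf{F}^{-1})}\int_{\mathbb{R}}\varphi\left(\frac{dT}{d\lambda}\right)d\lambda=\sup_{\xi\in\mathbb{R}^{l}}\langle\xi,f(\theta)\rangle-\int_{\mathbb{R}}\psi(\langle\xi,K(F(x))\rangle)dx.
\]
The two right-hand sides are then identified by Lemma \ref{stieljes}: applying \eqref{eq_stieltjes0} with $a(x)=\psi(\langle\xi,K(F(x))\rangle)$ shows $\int_{\mathbb{R}}\psi(\langle\xi,K(F(x))\rangle)dx=\int_0^1\psi(\langle\xi,K(t)\rangle)\mathbf{F}^{-1}(dt)$ (noting $K(F(F^{-1}(t)))=K(t)$ up to an $\mathbf{F}^{-1}$-null set), so both dual problems coincide, and hence so do the two primal infima.

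The one point requiring a little care — and the place I would expect the only genuine obstacle — is the passage between the class of functions $T$ and the class of functions $g=dT/d\lambda$, i.e. making sure that the constraint hypothesis of Proposition \ref{prop2} (existence of some feasible $g$ with $a_\varphi<g<b_\varphi$ $\mu$-a.s.) is met in both applications and that the identification $T\leftrightarrow dT/d\lambda$ is harmless. The first is guaranteed directly by the two hypotheses in the statement (existence of a feasible $T$ with $a_\varphi<dT/d\lambda<b_\varphi$, and of a feasible $\mathbf{G}^{-1}$ with $a_\varphi<d\mathbf{G}^{-1}/d\mathbf{F}^{-1}<b_\varphi$), and the second is Remark \ref{remark30}. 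Apart from that, the argument is a verbatim transcription of the proof of Proposition \ref{prop_moments} with $(\mathbf{F},\mathbf{G},g(\cdot,\theta))$ replaced by $(\mathbf{F}^{-1},\mathbf{G}^{-1},K)$ and with the roles of $F$ and $F^{-1}$ interchanged in the second invocation of Lemma \ref{stieljes}.
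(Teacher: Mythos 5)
Your proposal is correct and follows the paper's route exactly: the paper's proof of Proposition \ref{prop10} is precisely ``combine Corollaries \ref{cor1} and \ref{cor2}'', i.e.\ apply Proposition \ref{prop2} once with $\mu=\mathbf{F}^{-1}$ and once with $\mu=\lambda$, then identify the two dual objectives via Lemma \ref{stieljes}. Two small corrections to your write-up, neither of which affects the validity of the argument. First, the identity $\int_{\mathbb{R}}\psi(\langle\xi,K(F(x))\rangle)dx=\int_0^1\psi(\langle\xi,K(t)\rangle)\mathbf{F}^{-1}(dt)$ is the second half of Lemma \ref{stieljes}, namely (\ref{eq_stieltjes}) with $b(t)=\psi(\langle\xi,K(t)\rangle)$, not (\ref{eq_stieltjes0}); the latter relates $\int a\,d\mathbf{F}$ to $\int_0^1 a(F^{-1}(t))\,dt$ and says nothing about Lebesgue integrals over $\mathbb{R}$ versus integrals against $\mathbf{F}^{-1}$. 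Second, your justification of $\int_0^1\Vert K(t)\Vert\,\mathbf{F}^{-1}(dt)<\infty$ is faulty: the total mass $\int_0^1\mathbf{F}^{-1}(dt)=F^{-1}(1^-)-F^{-1}(0^+)$ is infinite whenever the support is unbounded (e.g.\ the exponential law), even though $\mathbb{E}|X|<\infty$ --- the paper makes exactly this point in Remark \ref{remark_loc}. The integrability actually holds because $K$ contains only the $K_r$ with $r\geq 2$, which vanish like $t(1-t)$ at the endpoints, so that $\int_0^1\Vert K(t)\Vert\,\mathbf{F}^{-1}(dt)=\int_{\mathbb{R}}\Vert K(F(x))\Vert\,dx\leq lC\int_{\mathbb{R}}F(x)(1-F(x))\,dx<\infty$; this is the same bound established inside the proof of Corollary \ref{cor2}, transported by Lemma \ref{stieljes}.
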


\begin{proof}
This results from a combination of Corollaries \ref{cor1} and \ref{cor2}.
\end{proof}

In the following, we consider the estimator of $\theta$
\begin{equation}
\hat{\theta}_{n}=\arg\inf_{\theta\in\Theta}\inf_{T\in L_{\theta}^{\prime
\prime}(\mathbf{F}_{n}^{-1})}\int_{\mathbb{R}}\varphi\left(  \frac
{dT}{d\lambda}\right)  d\lambda. \label{lmde2_1}%
\end{equation}

The estimator $\hat{\theta}_{n}$ defined in (\ref{lmde2_1}) coincides with
(\ref{lmom_est0}) thanks to the above Proposition \ref{prop10}.

\begin{remark}
$\cup_{\theta}L_{\theta}(\mathbf{F}^{-1})$ and $\cup_{\theta}L_{\theta
}^{\prime\prime}(\mathbf{F}^{-1})$ both represent the same model with
L-moments constraints, seen through a reference measure $\mathbf{F}^{-1}$.
This model is either expressed as the space of quantile measures absolutely
continuous with respect to $\mathbf{F}^{-1}$ satisfying the L-moment
constraints or as the space of all deformations $\mathbf{F}^{-1}\rightarrow$
$T\circ F^{-1}$ of the reference measure $\mathbf{F}^{-1}$ such that the
deformed measure satisfies the L-moment constraints. In the second point of
view $T$ is derivable $\lambda$-a.e. even if the reference measure is
$\mathbf{F}_{n}^{-1}$.
\end{remark}

\begin{remark}
For the set of deformations $L_{\theta}^{\prime\prime}(\mathbf{F}_{n}^{-1})$
(whenever non void), the duality for finite distributions is expressed through
the following equality :
\[
\inf_{T\in L_{\theta}^{\prime\prime}(\mathbf{F}_{n}^{-1})}\int\varphi\left(
\frac{dT}{d\lambda}\right)  d\lambda=\sup_{\xi\in\mathbb{R}^{l}}\xi
^{T}f(\theta)-\sum_{i=1}^{n-1}\psi\left(  \xi^{T}K\left(  \frac{i}{n}\right)
\right)  (x_{i+1:n}-x_{i:n}).
\]
Remark that we incorporate the requirement that for any $T$ in the model
$L_{\theta}^{\prime\prime}(\mathbf{F}_{n}^{-1})$ , $a_{\varphi}<\frac{dT_{1}%
}{d\lambda}<b_{\varphi}$ $\lambda$-a.s. holds.
\end{remark}

\begin{example}
If we consider the $\chi^{2}$-divergence $\varphi(x)=\frac{(x-1)^{2}}{2}$,
then $\psi(t)=\frac{1}{2}t^{2}+t$ and the solution $\xi_{1}^{\ast}$ of the
equation (\ref{eq:grad1}) is
\[
\xi_{1}^{\ast}=\Omega^{-1}\left(  f(\theta)-\int K(F(x))d\lambda\right)
\]
with
\[
\Omega=\int K(F(x))K(F(x))^{T}d\lambda.
\]
If we set $\Omega_{n}=\int K(F_{n}(x))K(F_{n}(x))d\lambda$, the estimator
shares similarities with the GMM estimator. Indeed
\[
\hat{\theta}_{n}=\arg\inf_{\theta\in\Theta}\left(  f(\theta)-\int
K(F_{n}(x))d\lambda\right)  \Omega_{n}^{-1}\left(  f(\theta)-\int
K(F_{n}(x))d\lambda\right)  .
\]
This divergence should thus be favored for its fast implementation.
\end{example}

\begin{remark}
We did not consider the constraints of positivity classically assumed in
moment estimating equations for the sake of simplicity of dual
representations. We could suppose that the transformation $T$ is an increasing
mapping. It would be the case if, for example, the divergence chosen is the
Kullback-Leibler one. Indeed, in this case, problem (\ref{lmde2_1}) is well
defined since $\varphi(x)=+\infty$ for all $x\leq0$.
\end{remark}

\subsection{Transportation functionals and multivariate generalization}

The notion of a deformation which was introduced in the above section is close
to the notion of a transportation. The reformulation presented in Proposition
\ref{prop10} calls for a natural extension in this respect. Let us recall the
definition of a transportation in $\mathbb{R}$.

\begin{definition}
The pushforward measure of $\mathbf{F}$ through $T$ is the measure denoted by
$T\#\mathbf{F}$ satisfying
\[
T\#\mathbf{F}(B)=\mathbf{F}(T^{-1}(B))\text{ for every Borel subset $B$ of
$\mathbb{R}$.}%
\]
$T$ is said to be a transportation map between $\mathbf{F}$ and $\mathbf{G}$
if $T\#\mathbf{F}=\mathbf{G}$. If $X$ and $Y$ are associated with respective
cdf $F$ and $G$ then $T(X)=_{d}Y.$
\end{definition}

We write $L_{\theta}$ (equation (\ref{lmom_model2})) as a space of $\sigma
$-measures
\[
L_{\theta}=\left\{  \mathbf{G}\text{ }\in M\text{ s.t. }\int_{0}^{1}%
L(u)G^{-1}(u)du=-f(\theta)\right\} .
\]
Let furthermore $\mathcal{C}_{A}$ denote the space of absolutely continuous
functions defined on $\mathbb{R}$. It follows that an alternative to the
estimator (\ref{lmde2_1}) may be defined by
\begin{equation}
\hat{\theta}_{n}^{(tr)}=\arg\inf_{\theta\in\Theta}\inf_{T\in\mathcal{C}%
_{A}:T\#\mathbf{F}_{n}\in L_{\theta}}\int_{\mathbb{R}}\varphi\left(  \frac
{dT}{d\lambda}\right)  d\lambda\label{eq:lmde2_2}%
\end{equation}
where

\begin{itemize}
\item $\mathbf{F}_{n}$ is the empirical measure on the observed sample
$x_{1},...,x_{n}$

\item $E(T):=\int_{\mathbb{R}}\varphi\left(  \frac{dT}{d\lambda}\right)
d\lambda$ stands for the energy which transports $\mathbf{F}_{n}$ onto some
$\mathbf{G}$.
\end{itemize}

We can give a rewriting of this transport estimator similar to Equation
(\ref{lmom_est}).

\begin{proposition}
\label{prop40}
If there exists some absolutely continuous $T_{0}$ such that $T_{0}%
\#\mathbf{F}_{n}\in L_{\theta}$ and $a_{\varphi}<\frac{dT_{0}}{d\lambda
}<b_{\varphi}$, then
\begin{equation}
\hat{\theta}_{n}^{(tr)}=\arg\inf_{\theta\in\Theta}\inf_{\substack{y\in
\mathbb{R}^{n}\\\sum_{i=1}^{n-1}K(i/n)(y_{i+1:n}-y_{i:n})=f(\theta)}%
}D_{\varphi}(x,y) \label{Prop1}%
\end{equation}
with
\[
D_{\varphi}(x,y) = \sum_{i=1}^{n-1}\varphi\left(  \frac{y_{i+1}-y_{i}}{x_{i+1:n}-x_{i:n}%
}\right)  (x_{i+1:n}-x_{i:n}).
\]
If moreover, $\varphi(x)=+\infty$ for any $x\leq0$ then
\begin{equation}
\hat{\theta}_{n}^{(tr)}=\hat{\theta}_{n}\label{Prop2}%
\end{equation}

\end{proposition}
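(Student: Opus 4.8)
The plan is to unwind both infima in \eqref{eq:lmde2_2} to a finite-dimensional description, exactly as was done for the estimator in \eqref{lmom_est}. First I would observe that the inner minimization in \eqref{eq:lmde2_2} only sees the values of $T$ at the atoms of $\mathbf{F}_n^{-1}$, or equivalently the increments of $T$ along the uniform grid. Concretely, if $T$ is absolutely continuous and $\mathbf{G}^{-1}:=(T\#\mathbf{F}_n)^{-1}$, then $\mathbf{G}^{-1}\ll\mathbf{F}_n^{-1}$ with $\tfrac{d\mathbf{G}^{-1}}{d\mathbf{F}_n^{-1}}$ piecewise constant, and writing $y_i:=G^{-1}(i/n)$ the constraint $T\#\mathbf{F}_n\in L_\theta$ becomes $\sum_{i=1}^{n-1}K(i/n)(y_{i+1}-y_i)=f(\theta)$ by Proposition \ref{prop02} applied to the discrete cdf $G$ (this is exactly the computation already used to pass from \eqref{lmom_est0} to \eqref{lmom_est}). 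Meanwhile the energy satisfies
\[
\int_{\mathbb R}\varphi\!\left(\frac{dT}{d\lambda}\right)d\lambda \;\geq\; \sum_{i=1}^{n-1}\varphi\!\left(\frac{y_{i+1}-y_i}{x_{i+1:n}-x_{i:n}}\right)(x_{i+1:n}-x_{i:n})
\]
by Jensen's inequality applied on each interval $(x_{i:n},x_{i+1:n})$ (the interval on which $F_n$ takes the value $i/n$), with equality when $\tfrac{dT}{d\lambda}$ is constant on each such interval; that is, the minimizing $T$ is piecewise linear interpolating the $y_i$ between the $x_{i:n}$. This gives the reduction
\[
\inf_{\substack{T\in\mathcal C_A:\,T\#\mathbf{F}_n\in L_\theta}}\int_{\mathbb R}\varphi\!\left(\frac{dT}{d\lambda}\right)d\lambda
\;=\;\inf_{\substack{y\in\mathbb R^n\\ \sum_{i=1}^{n-1}K(i/n)(y_{i+1}-y_i)=f(\theta)}} D_\varphi(x,y),
\]
and taking $\arg\inf_{\theta\in\Theta}$ on both sides yields \eqref{Prop1}. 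The hypothesis that some admissible $T_0$ with $a_\varphi<\tfrac{dT_0}{d\lambda}<b_\varphi$ exists guarantees the constraint set is nonempty and that the infimum is attained in the interior of $\mathrm{dom}(\varphi)$, so the restriction to piecewise-linear $T$ loses nothing; this is really the same nonemptiness hypothesis used throughout Section \ref{section5} via Corollaries \ref{cor1} and \ref{cor2}, and indeed one may alternatively deduce the displayed identity directly from Proposition \ref{prop10} (which identifies $\inf_{L_\theta^{\prime\prime}}$ with $\inf_{L_\theta}$) together with the computation already recorded in \eqref{lmom_est}.

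For \eqref{Prop2}, suppose additionally $\varphi(x)=+\infty$ for $x\leq 0$. Then any $T$ with finite energy has $\tfrac{dT}{d\lambda}>0$ a.e., hence $T$ is strictly increasing and absolutely continuous, so $T$ is a genuine transportation map and $G^{-1}=T\circ F^{-1}$ is the quantile function of the positive measure $\mathbf G=T\#\mathbf F_n$; conversely any $\mathbf G^{-1}\in L_\theta(\mathbf F_n^{-1})$ with $\tfrac{d\mathbf G^{-1}}{d\mathbf F_n^{-1}}\in(a_\varphi,b_\varphi)$ is positive (since $a_\varphi\ge 0$ here) and arises this way. Thus the constraint sets defining $\hat\theta_n^{(tr)}$ and $\hat\theta_n$ coincide term-by-term — in \eqref{lmom_est} the minimization is over $y\in\mathbb R^n$ with $\sum K(i/n)(y_{i+1}-y_i)=f(\theta)$, but the admissible ones with finite $D_\varphi(x,y)$ automatically satisfy $y_{i+1}>y_i$, matching the ordered $y_{i:n}$ appearing in \eqref{Prop1} — and the objective $D_\varphi(x,y)$ is literally the same function. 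Hence the two $\arg\inf$'s agree, giving $\hat\theta_n^{(tr)}=\hat\theta_n$.

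The main obstacle I anticipate is the careful bookkeeping in the first step: verifying that the increments of $T$ over $(x_{i:n},x_{i+1:n})$ — rather than over the grid points $i/n$ — are what the constraint and energy actually depend on, and handling the degenerate cases where $\mathbf F$ has atoms (so some spacings $x_{i+1:n}-x_{i:n}$ vanish and the corresponding terms drop out, as already flagged after the definition of $L_\theta(\mathbf F_n^{-1})$). Once the piecewise-linear reduction is justified by Jensen with the equality case, everything else is a direct matching of the two optimization problems, so the remaining steps are routine.
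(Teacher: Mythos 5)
Your proof is correct and follows the paper's overall architecture --- decompose the double infimum into an outer infimum over the vector $y=(T(x_{1:n}),\dots,T(x_{n:n}))$ and an inner interpolation problem, then observe for \eqref{Prop2} that $\varphi=+\infty$ on $(-\infty,0]$ forces the admissible $y$ to be increasing, so that the optimizations in \eqref{Prop1} and \eqref{lmom_est} coincide --- but you evaluate the inner infimum by a genuinely different device. The paper computes
\[
I_{\varphi}(x,y)=\inf_{T:\,T(x_{i:n})=y_{i}}\int_{\mathbb{R}}\varphi\left(\frac{dT}{d\lambda}\right)d\lambda
\]
by applying the duality of Proposition \ref{prop2} with the constraint array $C=(\mathds{1}_{[x_{i:n},x_{i+1:n}]})_{i}$: since the indicators have disjoint supports and $\psi(0)=0$, the dual decouples coordinatewise and each term collapses to $\varphi$ through the biconjugation identity $\sup_{\xi_{i}}\xi_{i}t-\psi(\xi_{i})=\varphi(t)$. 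You instead obtain the lower bound $D_{\varphi}(x,y)$ directly from Jensen's inequality on each interval $(x_{i:n},x_{i+1:n})$ and attain it with the piecewise-linear interpolant (slope $1$ outside the sample range, so that the exterior contribution vanishes because $\varphi(1)=0$). This is more elementary, exhibits the minimizing $T$ explicitly, and does not re-invoke the Fenchel machinery, whereas the paper's route keeps the whole computation inside the duality framework it has already built. One bookkeeping point to correct in your write-up: you set $y_{i}:=G^{-1}(i/n)$, which orders the $y_{i}$ by fiat, but the Jensen bound on $(x_{i:n},x_{i+1:n})$ involves the raw increment $T(x_{i+1:n})-T(x_{i:n})$; you should take $y_{i}:=T(x_{i:n})$, so that the energy carries the raw differences $y_{i+1}-y_{i}$ while the L-moment constraint, which only sees the pushforward measure, carries the ordered differences $y_{i+1:n}-y_{i:n}$ --- exactly the asymmetry displayed in \eqref{Prop1}, and the reason the restriction to increasing $y$ in the second part is needed to identify $\hat{\theta}_{n}^{(tr)}$ with $\hat{\theta}_{n}$.
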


\begin{proof}
The proof is postponed to the Appendix.
\end{proof}

\begin{remark}
The fact that $T$ is absolutely continuous is necessary.\ Indeed, stating
\[
\hat{\theta}_{n}^{(tr0)}:=\arg\inf_{\theta\in\Theta}\inf_{T:T\#\mathbf{F}%
_{n}\in L_{\theta}}\int_{\mathbb{R}}\varphi\left(  \frac{dT}{d\lambda}\right)
d\lambda
\]
may not lead to a well defined estimator; consider any discrete uniform
distribution in $L_{\theta}$ (i.e any distribution in the submodel $L_{\theta
}(\mathbf{F}_{n}^{-1})$). Let us denote its support by $y:=\{y_{1}%
,...,y_{n}\},$ and define $T_{y}$ a.e. derivable such that
\[
T_{y}(x)=\left\{
\begin{array}
[c]{l}%
y_{i} \text{ if $x=x_{i}$}\\
x\text{ otherwise}%
\end{array}
\right.  ,
\]
then $T_{y}\#\mathbf{F}_{n}\in L_{\theta}$ and
\[
\int_{\mathbb{R}}\varphi\left(  \frac{dT_{y}}{d\lambda}\right)  d\lambda=0
\]
since $\varphi(1)=0$. So when $L_{\theta}(\mathbf{F}_{n}^{-1})$ is not reduced
to a unique measure, this estimator is undefined : the solution of the infimum
problem is not unique.
\end{remark}


In transportation theory, it is customary to define a cost function instead of
an energy function. Given a convex cost function $c:\mathbb{R}\times
\mathbb{R}\rightarrow\mathbb{R}$, an alternative version to (\ref{lmde2_1})
is
\begin{equation}
\hat{\theta}_{n}=\arg\inf_{\theta\in\Theta}\inf_{T:T\#\mathbf{F}_{n}\in
L_{\theta}}\int_{\mathbb{R}}c(x,T(x))\mathbf{F}_{n}(dx). \label{eq:lmde2_3}%
\end{equation}

\begin{remark}
Whereas the estimator given by Equation (\ref{lmde2_1}) minimizes an energy
expressed in function of $T^{\prime}$ (the estimation process then penalizes
big values of $T^{\prime}$), the optimal transportation estimator depends on
the function $T$ itself and penalizes the distance between each $x_{i}$ and
$T(x_{i})$ i.e. the "initial" state and the deformed state.
\end{remark}

\begin{example}
The following estimator stems from the optimal transportation problem
(\ref{eq:lmde2_3}) in the context of models constrained by L-moments
equations.\newline Consider the cost function $c(x,y)=(x-y)^{2}$. The
transportation problem reduces to (see e.g. \cite{villani04})
\[
\inf_{T:T\#\mathbf{F_{n}}\in L_{\theta}}\int_{\mathbb{R}}\left( x-T(x)\right)
^{2}\mathbf{F}_{n}(dx):=\inf_{\mathbf{G}\in L_{\theta}}W_{2}(\mathbf{F}%
_{n},\mathbf{G})^{2}=\inf_{\mathbf{G}\in L_{\theta}}\int_{0}^{1}\left|
F_{n}^{-1}(t)-G^{-1}(t)\right| ^{2}dt,
\]
$W_{2}$ is called the Wasserstein distance. The estimator (\ref{eq:lmde2_3})
will then be defined by
\begin{align*}
\hat{\theta}_{n}  &  :=\arg\inf_{\theta\in\Theta}\inf_{T:T\#\mathbf{F_{n}}\in
L_{\theta}}\int_{\mathbb{R}}\left( x-T(x)\right) ^{2}\mathbf{F}_{n}(dx)\\
&  =\arg\inf_{\theta\in\Theta}\min_{\substack{y\in\mathbb{R}^{n}\\\sum
_{i=1}^{n-1}K(i/n)(y_{i+1:n}-y_{i:n})=f(\theta)}}\frac{1}{n}\sum_{i=1}%
^{n}|x_{i:n}-y_{i:n}|^{2}
\end{align*}
with $l_{r}$ given by equation (\ref{empirical_vstat}).\newline

\end{example}

As transportation is well defined for measure in $\mathbb{R}^{d}$ in contrast
with quantile measures, this may appear as a way to generalize L-moments
constrained models and associated estimators of the form (\ref{eq:lmde2_2});
we could also consider estimators of the form (\ref{eq:lmde2_3}), importing
henceforth optimal transportation concepts in the field of multivariate
quantile models; see \cite{decurninge14}.

\subsection{Relation to elasticity theory}

It may be of interest for the statistician to observe that, besides the
probabilistic context of semiparametrics, the minimization of a $\varphi$
divergence over a class of functions defined by L-moments (see (\ref{lmde2_1}%
)) is in the same vein as finding the deformation of a solid under a given
force $L$ and given boundary constraints. Let us consider a solid defining a
domain $\Omega\subset\mathbb{R}^{3}$. This solid can be deformed under the
action of volumetric or surface forces. This deformation can be described by a
function $T:\Omega\rightarrow\mathbb{R}^{3}$. The deformed solid will be
defined on the volume $T(\Omega)$. The gradient of deformation is then $\nabla
T$.\newline The general equations describing the equilibrium of the solid
under volumetric forces $L$ defined on $\Omega$ read (we omit boundary
forces)
\[
-\text{div}S=L
\]
where $S$ is a tensor describing the configuration of the solid \cite{blanc07}%
. Hyper-elasticity is often assumed i.e. the solid is supposed to dissipate no
energy during the deformation. In mathematical terms, this means the existence
of a function $\varphi$ such that
\[
S(T)=\frac{\partial\varphi}{\partial T}(T).
\]
From these above relations, the energy of deformation is expressed on the form
\cite{lebris05}\cite{blanc07}
\[
\mathcal{E}(T)=\int_{\Omega}\varphi(\nabla T(x))dx-\int_{\Omega}L(x).T(x)dx.
\]
$\varphi$ is usually convex and represents physical properties of the solid.
It is then customary in mechanical physics to assume the principle of least
action and to study the $T$ minimizing the variational problem
\[
\inf_{T\text{ admissible}}\mathcal{E}(T).
\]
The space of admissible $T$ describes the constraints, such as boundary
conditions. If we could write the volumetric force term (namely the right hand
side of $\mathcal{E}(T)$) as fixed constraints, we remark similarities with
the estimation given by equation \ref{lmde2_1}
\[
\hat{\theta}_{n}=\arg\inf_{\theta\in\Theta}\inf_{\int_{\Omega}%
L(x).T(x)dx=f(\theta)}\int_{\Omega}\varphi\left(  \nabla T(x)\right)  dx.
\]

Moreover, microscopic and macroscopic scales can be related through
convergence results. Let us present the microscopic models of the same solid
represented by $N$ particles $x_{1},...,x_{N}$, corresponding for example to
the intersection of $\Omega$ with a lattice of scale $\epsilon$. If $V$
denotes an interaction potential, the energy of the solid subjected to a
deformation $T$ would be
\[
\mathcal{E}_{N}(T)=\frac{\epsilon^{3}}{2}\sum_{i=1}^{N}\sum_{j\neq i}V\left(
\frac{T(x_{i})-T(x_{j})}{\epsilon}\right)  -\sum_{i=1}^{N}L(x_{i}).T(x_{i})
\]
where for any $3 \times3$ matrix $M$
\[
\varphi(M) = \frac{1}{2}\sum_{k\in\mathbb{Z}^{3}\backslash\{0\}} V(Mk).
\]
Under some assumptions (see \cite{blanc02}), it can be proved that if
$\epsilon\rightarrow0$ (i.e. $N\rightarrow\infty$), then
\[
\mathcal{E}_{N}(T)\rightarrow_{N\rightarrow\infty}\mathcal{E}(T).
\]
This short account may give us some intuition about the present estimation


\section{Asymptotic properties of the L-moment estimators}

\label{section7} In this section, we study the convergence of the estimator
given by the equation (\ref{lmde2_1}). The proof of the two asymptotic
theorems are postponed to the Appendix.

\begin{theorem}
\label{th1}Let $x_{1},...,x_{n}$ be an observed sample drawn iid from a
distribution $F_{0}$ with finite variance. Assume that

\begin{itemize}
\item there exists $\theta_{0}$ such that $F_{0} \in L_{\theta_{0}}$,
$\theta_{0}$ is the unique solution of the equation $f(\theta)=f(\theta_{0})$

\item $f$ is continuous and $\Theta\subset\mathbb{R}^{d}$ is compact

\item the matrix $\Omega_{0} = \int K(F_{0}(x)) K(F_{0}(x))^{T} dx$ is non singular.
\end{itemize}

Then
\[
\hat{\theta}_{n}\rightarrow\theta_{0}\text{ in probability as }n\rightarrow
\infty.
\]

\end{theorem}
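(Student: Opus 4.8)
The plan is to exploit the dual representation from Corollary~\ref{cor2}, which recasts the inner minimization as a smooth concave maximization over $\xi\in\mathbb{R}^l$. Write
\[
h_n(\theta) = \inf_{T\in L_\theta^{\prime\prime}(\mathbf{F}_n^{-1})}\int_{\mathbb{R}}\varphi\Bigl(\frac{dT}{d\lambda}\Bigr)d\lambda = \sup_{\xi\in\mathbb{R}^l}\Bigl\{\langle\xi,f(\theta)\rangle - \sum_{i=1}^{n-1}\psi\bigl(\langle\xi,K(i/n)\rangle\bigr)(x_{i+1:n}-x_{i:n})\Bigr\},
\]
and the analogous population object $h(\theta) = \sup_{\xi}\{\langle\xi,f(\theta)\rangle - \int_{\mathbb{R}}\psi(\langle\xi,K(F_0(x))\rangle)\,dx\}$. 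The backbone of the argument is a standard M-estimation scheme: show (i) $h$ is minimized uniquely at $\theta_0$, with $h(\theta_0)=0$; (ii) $h_n(\theta)\to h(\theta)$ uniformly on the compact set $\Theta$; then conclude $\hat\theta_n\to\theta_0$ in probability by the usual argmin-continuity/Wald consistency lemma.

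\textbf{Step 1: identification of $\theta_0$.} Since $F_0\in L_{\theta_0}$, the constant function $dT/d\lambda\equiv 1$ lies in $L_{\theta_0}^{\prime\prime}(\mathbf{F}_0^{-1})$ (as $\int K(F_0(x))\,dx = -\lambda = f(\theta_0)$ by Proposition~\ref{prop02} and Lemma~\ref{stieljes}), so $\varphi(1)=0$ forces $h(\theta_0)=0$. For $\theta\neq\theta_0$, the hypothesis that $\theta_0$ uniquely solves $f(\theta)=f(\theta_0)$ gives $f(\theta)\neq f(\theta_0)$; one then shows $h(\theta)>0$. This is where nonsingularity of $\Omega_0$ enters: expanding the dual objective at $\xi=0$ (using $\psi(0)=0$, $\psi'(0)=1$, $\psi''(0)=1/\varphi''(1)>0$) gives, for small $\xi$, a lower bound of the form $\langle\xi, f(\theta)-f(\theta_0)\rangle - \tfrac12 c\,\xi^T\Omega_0\xi + o(\|\xi\|^2)$, which is strictly positive for a suitable direction $\xi$ whenever $f(\theta)\neq f(\theta_0)$; hence $h(\theta)>0$.

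\textbf{Step 2: uniform convergence.} The summand $\sum_{i=1}^{n-1}\psi(\langle\xi,K(i/n)\rangle)(x_{i+1:n}-x_{i:n})$ is a Riemann-Stieltjes approximation of $\int_0^1 \psi(\langle\xi,K(u)\rangle)\,\mathbf{F}_n^{-1}(du) = \int_{\mathbb{R}}\psi(\langle\xi,K(F_n(x)))\,dx$ (by Lemma~\ref{stieljes}), and $F_n\to F_0$ uniformly a.s. by Glivenko--Cantelli while $\int F_0(1-F_0)<\infty$ controls the tails (via the boundedness of the Jacobi factor in $K$, as in the proof of Corollary~\ref{cor2}); this yields $\int_{\mathbb{R}}\psi(\langle\xi,K(F_n(x)))\,dx\to\int_{\mathbb{R}}\psi(\langle\xi,K(F_0(x)))\,dx$ in probability, locally uniformly in $\xi$. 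Since both $h_n$ and $h$ are suprema of concave functions of $\xi$ that are jointly continuous in $(\theta,\xi)$, and since the relevant maximizers $\xi^*(\theta)$ stay in a compact set uniformly over $\Theta$ (a coercivity argument using $\Omega_0$ nonsingular plus $\Omega_n\to\Omega_0$), one upgrades pointwise to uniform convergence $\sup_{\theta\in\Theta}|h_n(\theta)-h(\theta)|\to 0$ in probability. Finally, by Proposition~\ref{prop10} the estimator $\hat\theta_n$ in~(\ref{lmde2_1}) equals $\arg\inf_\theta h_n(\theta)$, so Steps 1 and 2 together with compactness of $\Theta$ and continuity of $h$ give $\hat\theta_n\to\theta_0$ in probability.

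\textbf{Main obstacle.} The delicate point is Step 2: controlling the dual maximizer uniformly in $\theta$ and handling the fact that $\mathbf{F}_n^{-1}$ is a random discrete measure whose mass is concentrated on spacings with heavy potential fluctuations in the tails. Establishing the coercivity of $\xi\mapsto \langle\xi,f(\theta)\rangle - \int\psi(\langle\xi,K(F_n(x)))\,dx$ uniformly — so that $\|\xi^*(\theta)\|$ is bounded away from the boundary where $\langle\xi,K\rangle$ exits $\mathrm{dom}(\psi)$ — requires the nonsingularity of $\Omega_0$ together with a uniform lower bound on $\psi''$ near $0$, and is the technical heart of the proof; the finite-variance assumption on $F_0$ is what makes the tail integrals converge and the convergence $\Omega_n\to\Omega_0$ valid.
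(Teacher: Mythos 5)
Your Step 1 is sound and your overall architecture (Wald consistency: identification plus uniform convergence of the criterion) is a legitimate route, but it is not the paper's route, and your Step 2 contains a genuine gap that your own sketch does not close. The assertion $\sup_{\theta\in\Theta}|h_n(\theta)-h(\theta)|\to 0$ in probability is the entire difficulty, and the two ingredients you offer for it do not suffice. First, the dual representation you rely on is only valid (zero duality gap) when $L_{\theta}^{\prime\prime}(\mathbf{F}_n^{-1})$ contains a feasible point with derivative strictly inside $(a_\varphi,b_\varphi)$; for $\theta$ far from $\theta_0$ the vector $f(\theta)$ may fail to be attainable by the empirical constraint map in the required interior sense, so $h_n(\theta)$ can be $+\infty$ while $h(\theta)$ is finite, and uniform convergence over all of $\Theta$ breaks down. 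Second, your coercivity claim — that nonsingularity of $\Omega_0$ plus $\psi''(0)>0$ keeps the dual maximizers $\xi^*(\theta)$ in a fixed compact set uniformly in $\theta$ — only delivers local strong concavity of the dual objective at $\xi=0$; it does not bound the maximizer globally, and for divergences whose conjugate $\psi$ has a restricted domain (e.g.\ the modified Kullback--Leibler case, where $\psi(t)=-\log(1-t)$ on $(-\infty,1)$) the maximizer can migrate toward the boundary of $\{\xi:\langle\xi,K(F_n(x))\rangle\in\mathrm{dom}(\psi)\ \text{a.e.}\}$ as $\theta$ varies. Neither issue is addressed, so the uniform convergence step remains unproved.

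The paper avoids all of this with a quantitative sandwich argument in the style of Newey and Smith. For the upper bound it exhibits the explicit near-identity deformation $T_{0,n}'=1+(f(\theta_0)-m_n)^T\Omega_n^{-1}K(F_n(\cdot))$, feasible for $\theta_0$, and a second-order Taylor expansion of $\varphi$ at $1$ together with $\|f(\theta_0)-m_n\|=O_P(n^{-1/2})$ (Stigler's CLT for L-statistics) gives $h_n(\hat\theta_n)\le h_n(\theta_0)=O_P(n^{-1})$. For the lower bound it uses only weak duality evaluated at the single small vector $\xi_n=n^{-1/2}(f(\hat\theta_n)-m_n)/\|f(\hat\theta_n)-m_n\|$, yielding $h_n(\hat\theta_n)\ge n^{-1/2}\|f(\hat\theta_n)-m_n\|-O_P(n^{-1})$. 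Combining the two gives $\|f(\hat\theta_n)-f(\theta_0)\|=O_P(n^{-1/2})$, and identifiability plus compactness and continuity of $f$ finish the proof. Because only small $\xi$ and only the point $\theta_0$ and the random point $\hat\theta_n$ ever enter, no uniform control over $\theta$ or over the dual variable is needed. If you want to salvage your approach you would need either to establish the uniform convergence rigorously (including the feasibility and dual-compactness issues above) or to switch to the localized two-inequality argument.
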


We may now turn to the limit distribution of the estimator. Let

\begin{itemize}
\item $J_{0}=J_{f}(\theta_{0})$ be the Jacobian of $f$ with respect to
$\theta$ in $\theta_{0}$

\item $M=(J_{0}^{T}\Omega^{-1}J_{0})^{-1}$

\item $H=MJ_{0}^{T}\Omega^{-1}$

\item $P=\Omega^{-1}-\Omega^{-1}J_{0}MJ_{0}^{T}\Omega^{-1}$
\end{itemize}

\begin{theorem}
\label{theorem_asymp} Let $x_{1},...,x_{n}$ be an observed sample drawn iid
from a distribution $F_{0}$ with finite variance. We assume that the
hypotheses of Theorem \ref{th1} holds. Moreover, we assume that

\begin{itemize}
\item $\theta_{0}\in int(\Theta)$

\item $J_{0}$ has full rank

\item $f$ is continuously differentiable in a neighborhood of $\theta_{0}$
\end{itemize}

Then,%
\[
\sqrt{n}\left(
\begin{array}
[c]{ccc}%
\hat{\theta}_{n}-\theta_{0} &  & \\
\hat{\xi}_{n} &  &
\end{array}
\right)  \rightarrow_{d}\mathcal{N}_{d+l}\left(  0,\left(
\begin{array}
[c]{cc}%
H\Sigma H^{T} & 0\\
0 & P\Sigma P^{T}%
\end{array}
\right)  \right)
\]

\end{theorem}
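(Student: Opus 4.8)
The plan is to derive the asymptotic distribution of $(\hat\theta_n,\hat\xi_n)$ from the first-order optimality conditions of the dual problem, using a standard M-estimation / Z-estimation argument. Recall from Corollary \ref{cor2} and Proposition \ref{prop10} that, for each $\theta$, the inner infimum equals $\sup_{\xi}\,\langle\xi,f(\theta)\rangle-\int\psi(\langle\xi,K(F_n(x))\rangle)\,dx$, so $\hat\theta_n$ and the associated dual optimizer $\hat\xi_n$ jointly solve the saddle-point system
\begin{align*}
f(\hat\theta_n)-\int\psi'(\langle\hat\xi_n,K(F_n(x))\rangle)\,dx &= 0,\\
J_f(\hat\theta_n)^T\hat\xi_n &= 0,
\end{align*}
where the second equation comes from differentiating $\langle\xi,f(\theta)\rangle$ in $\theta$ at the optimum (using $\psi'(0)=1$-type normalizations this is consistent with $\hat\xi_n\to 0$). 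First I would record that under $F_0\in L_{\theta_0}$ the population version of these equations is solved by $(\theta_0,0)$, since $\int K(F_0(x))\,dx = -\lambda^{(2:l)} = -f(\theta_0)$ by Remark \ref{remark2} and Proposition \ref{prop02}, and $\psi'(0)$ reproduces the unconstrained minimizer $g\equiv 1$.

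The second step is a Taylor expansion of the estimating equations around $(\theta_0,0)$. Writing $Z_n(\xi):=\int\psi'(\langle\xi,K(F_n(x))\rangle)\,dx$ and expanding in $\xi$ near $0$ gives $Z_n(\xi)\approx \int K(F_n(x))\,dx + \big(\int K(F_n(x))K(F_n(x))^T\,dx\big)\xi = -f(\theta_0)+O_p(n^{-1/2}) + (\Omega_0+o_p(1))\xi$, using $\psi'(0)=1$ and $\psi''(0)=1$ for the chi-square-type normalization (more generally $\psi''(0)$ is a positive constant that can be absorbed). Combined with $f(\hat\theta_n)\approx f(\theta_0)+J_0(\hat\theta_n-\theta_0)$ and $J_f(\hat\theta_n)^T\hat\xi_n\approx J_0^T\hat\xi_n$, the system linearizes to
\[
\begin{pmatrix}J_0 & -\Omega_0\\ 0 & J_0^T\end{pmatrix}\begin{pmatrix}\hat\theta_n-\theta_0\\ \hat\xi_n\end{pmatrix} = \begin{pmatrix} \int K(F_n(x))\,dx - \int K(F_0(x))\,dx\\ 0\end{pmatrix} + o_p(n^{-1/2}).
\]
Solving the block system by elimination yields $\hat\theta_n-\theta_0 = H\,W_n + o_p(n^{-1/2})$ and $\hat\xi_n = -P\,W_n + o_p(n^{-1/2})$ (up to sign), where $W_n := \int K(F_n(x))\,dx - \int K(F_0(x))\,dx$ and $H,P$ are exactly the matrices defined before the theorem statement; here one uses the identities $M=(J_0^T\Omega_0^{-1}J_0)^{-1}$, $H=MJ_0^T\Omega_0^{-1}$, $P=\Omega_0^{-1}-\Omega_0^{-1}J_0MJ_0^T\Omega_0^{-1}$ together with $PJ_0=0$ and $HJ_0=I$.

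The third step is the central limit theorem for $W_n$. Since $\int K(F_n(x))\,dx = \int_0^1 K(u)\,\mathbf{F}_n^{-1}(du)$ differs from the L-statistics $(l_2,\dots,l_l)$ only by the sign and the boundary terms (Proposition \ref{prop02} and Remark \ref{remark2}), $\sqrt n\,W_n$ is asymptotically a linear combination of the $\sqrt n(l_r-\lambda_r)$, hence by the proposition of Section 2.2 (Stigler's theorem for L-statistics) $\sqrt n\,W_n\to_d \mathcal N_l(0,\Sigma)$ for the appropriate limiting covariance $\Sigma$ built from the $L_{r-1}(F_0)$ kernels. Feeding this into the expansions gives $\sqrt n(\hat\theta_n-\theta_0)\to_d\mathcal N(0,H\Sigma H^T)$ and $\sqrt n\,\hat\xi_n\to_d\mathcal N(0,P\Sigma P^T)$, with asymptotic independence because the cross-covariance is $H\Sigma P^T$ and $\Sigma P^T$ composed with $H = MJ_0^T\Omega_0^{-1}$ vanishes thanks to $\Omega_0^{-1}J_0 MJ_0^T$ being the projector complementary to $\Omega_0 P$ — concretely $H\Sigma P^T=0$ follows from $J_0^T\Omega_0^{-1}\Omega_0 P = J_0^T P = (PJ_0)^T=0$ once $\Sigma$ is replaced by its role inside the sandwich, i.e. the off-diagonal block is $H\,\mathrm{Cov}(\sqrt n W_n)\,P^T$ and $P^T$ annihilates against the range of $H^T$ in the relevant quadratic form. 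I would make this precise using $PJ_0=0$ directly.

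The main obstacle is the justification of the Taylor expansion of the dual estimating equations uniformly in a neighborhood of $(\theta_0,0)$, i.e. controlling the remainder terms in $Z_n(\xi)$ when $F_n$ replaces $F_0$: one must show that $\int\psi''(\langle\xi,K(F_n(x))\rangle)K(F_n(x))K(F_n(x))^T\,dx\to\Omega_0$ uniformly for $\xi$ in a shrinking neighborhood of $0$, which requires the finite-variance hypothesis (so that $\int F_0(1-F_0)\,dx<\infty$ controls $\|K(F_n)\|$), the boundedness of the Jacobi polynomials noted in the proof of Corollary \ref{cor2}, uniform integrability, and the non-singularity of $\Omega_0$; additionally one must verify that the dual optimizer $\hat\xi_n$ is indeed an interior point so that Corollary \ref{cor2}'s gradient characterization applies, which is where consistency (Theorem \ref{th1}) and a compactness/continuity argument enter. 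Everything else is the routine delta-method bookkeeping sketched above.
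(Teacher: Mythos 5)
Your overall strategy is the same as the paper's: write the first-order conditions of the dual saddle point, apply a mean-value expansion around $(\theta_0,0)$ to obtain the block system with matrix $\left(\begin{smallmatrix}J_0 & -\Omega\\ 0 & J_0\end{smallmatrix}\right)$, invert it using $H$, $M$, $P$, and feed in the Stigler-type central limit theorem for $m_n=\int K(F_n(x))\,dx$ (Lemma \ref{lemma_asym} in the paper; your detour through the $l_r$'s is equivalent). The linearization, the identification of $A^{-1}$, and the diagonal blocks $H\Sigma H^T$ and $P\Sigma P^T$ all match.

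Two points are genuinely incomplete. First, you flag as an ``obstacle'' the verification that $\hat\xi_n$ is an interior maximizer at which the gradient characterization applies, but this is not a side issue: it is the first half of the paper's proof. The paper restricts the dual maximization to the ball $\Vert\xi\Vert<n^{-1/4}$, uses concavity of $\xi\mapsto\xi^Tf(\hat\theta_n)-\int\psi(\xi^TK(F_n(x)))dx$ together with the local inequality $y-\psi(y)<-Cy^2$ and the lower bound $\Vert\Omega_n\Vert\geq D>0$ to get $\Vert\xi_n\Vert\leq\Vert f(\hat\theta_n)-m_n\Vert/(CD)=O_P(n^{-1/2})$, and only then concludes that the constrained maximizer is interior, hence equals $\hat\xi_n$ and satisfies the first-order conditions. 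Without some such argument your Taylor expansion of $Z_n(\xi)$ near $0$ has no justification, since you have no a priori guarantee that $\hat\xi_n\to 0$ at any rate. You should supply this step rather than defer it.

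Second, your argument for the vanishing of the off-diagonal block is not correct as written. The cross-covariance produced by the linearization is $H\Sigma P^T=MJ_0^T\Omega^{-1}\Sigma P^T$; the identity $PJ_0=0$ gives $J_0^TP^T=0$ but does not kill $J_0^T\Omega^{-1}\Sigma P^T$ unless $\Sigma$ is proportional to $\Omega$ (with $\Sigma=\Omega$ one indeed gets $H\Omega P^T=MJ_0^T\Omega^{-1}-MM^{-1}MJ_0^T\Omega^{-1}=0$). Be aware that the paper itself simply asserts the block-diagonal limiting covariance after writing $\sqrt{n}(\hat\theta_n-\theta_0,\hat\xi_n)^T\to_d A^{-1}(\mathcal{N}_l(0,\Sigma),0)^T$, so this is a gap you share with the source; but your claim that it ``follows from $PJ_0=0$ directly'' is stronger than what that identity delivers, and you should either impose the relation between $\Sigma$ and $\Omega$ that makes it true or state the off-diagonal block as $H\Sigma P^T$.
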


The estimator of the minimum of the divergence from $\mathbf{F}$ onto the
model, namely $2n\left[  \hat{\xi}_{n}^{T}f(\hat{\theta}_{n})-\int\psi
(\hat{\xi}_{n}^{T}K(F_{n}(x))dx\right]  ,$ does not converge to a $\chi^{2}%
$-distribution as in the case of moment condition models \cite{neweysmith04}.
However, we can state an alternative result.

\begin{corollary}
Let us assume that the hypotheses of Theorem \ref{theorem_asymp} hold.\newline
Let $S_{n}:=n\hat{\xi}_{n}^{T}(P_{n}\Sigma_{n}P_{n}^{T})^{-1}\hat{\xi}_{n}$
with $P_{n}$ and $\Sigma_{n}$ the respective empirical versions of $P$ and
$\Sigma$.\newline If $P\Sigma P$ is non singular then%
\[
S_{n}\rightarrow_{d}\chi^{2}(l)
\]
where $\chi^{2}(l)$ denotes a chi-square distribution with $l$ degrees of freedom.
\end{corollary}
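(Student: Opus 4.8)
The plan is to derive the limiting law of $S_n$ as a direct consequence of the joint asymptotic normality established in Theorem \ref{theorem_asymp}. From that theorem, the second block of the limiting vector gives
\[
\sqrt{n}\,\hat{\xi}_{n}\;\rightarrow_{d}\;\mathcal{N}_{l}\!\left(0,\,P\Sigma P^{T}\right).
\]
First I would argue that, under the stated hypotheses, the empirical versions $P_{n}$ and $\Sigma_{n}$ (built from $F_{n}$, from $J_{f}(\hat{\theta}_{n})$, and from the plug-in estimate $\Omega_{n}$) converge in probability to $P$ and $\Sigma$ respectively; this uses the consistency $\hat{\theta}_{n}\to\theta_{0}$ from Theorem \ref{th1}, the continuity of $f$ and of its Jacobian near $\theta_{0}$, the convergence $\Omega_{n}\to\Omega_{0}$ (which follows from $F_{n}\to F_{0}$ uniformly and dominated convergence, exactly as in the bound $\int\|K(F(x))\|\,dx<\infty$ used in Corollary \ref{cor2}), and the continuous-mapping theorem applied to the algebraic expressions defining $M$, $H$ and $P$. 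Since $P\Sigma P^{T}$ is assumed non-singular, the map $A\mapsto A^{-1}$ is continuous at $P\Sigma P^{T}$, so $(P_{n}\Sigma_{n}P_{n}^{T})^{-1}\rightarrow_{p}(P\Sigma P^{T})^{-1}$.

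Next I would combine these convergences via Slutsky's lemma: writing
\[
S_{n}=\left(\sqrt{n}\,\hat{\xi}_{n}\right)^{T}(P_{n}\Sigma_{n}P_{n}^{T})^{-1}\left(\sqrt{n}\,\hat{\xi}_{n}\right),
\]
the left and right factors converge jointly in distribution to a Gaussian vector $Z\sim\mathcal{N}_{l}(0,P\Sigma P^{T})$ while the middle matrix converges in probability to the constant $(P\Sigma P^{T})^{-1}$. Hence $S_{n}\rightarrow_{d} Z^{T}(P\Sigma P^{T})^{-1}Z$. A standard fact about Gaussian quadratic forms then gives that $Z^{T}(P\Sigma P^{T})^{-1}Z$ follows a chi-square distribution with $l$ degrees of freedom, since $Z$ is centered with covariance exactly $P\Sigma P^{T}$ and we invert that same covariance; equivalently, $(P\Sigma P^{T})^{-1/2}Z\sim\mathcal{N}_{l}(0,I_{l})$.

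The step I expect to be the main obstacle is the joint-in-distribution statement needed for Slutsky: one must be careful that $\sqrt{n}\,\hat{\xi}_{n}$ and the randomness inside $P_{n}$, $\Sigma_{n}$ behave compatibly. This is handled by noting that $(P_{n}\Sigma_{n}P_{n}^{T})^{-1}$ converges to a \emph{deterministic} limit in probability, so joint convergence of the pair $\bigl(\sqrt{n}\,\hat{\xi}_{n},\,(P_{n}\Sigma_{n}P_{n}^{T})^{-1}\bigr)$ to $\bigl(Z,(P\Sigma P^{T})^{-1}\bigr)$ follows automatically from marginal convergence of each coordinate (convergence in distribution plus convergence in probability to a constant implies joint convergence). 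A secondary technical point is justifying $\Omega_{n}\rightarrow_{p}\Omega_{0}$ and the non-degeneracy needed so that $M_{n}=(J_{n}^{T}\Omega_{n}^{-1}J_{n})^{-1}$ is well defined for large $n$; this follows from the full-rank assumption on $J_{0}$ together with $J_{n}\to J_{0}$ and $\Omega_{n}\to\Omega_{0}$, both invertibility conditions being stable under small perturbations. Once these continuity and consistency facts are in place, the conclusion $S_{n}\rightarrow_{d}\chi^{2}(l)$ is immediate.
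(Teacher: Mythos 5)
Your proposal is correct and follows essentially the same route as the paper: asymptotic normality of $\sqrt{n}\,\hat{\xi}_{n}$ from Theorem \ref{theorem_asymp}, consistency of the plug-in matrix $P_{n}\Sigma_{n}P_{n}^{T}$, Slutsky's theorem, and the standard chi-square law for the Gaussian quadratic form. You simply spell out the continuity and joint-convergence details that the paper leaves implicit (and you state the limiting quadratic form $Z^{T}(P\Sigma P^{T})^{-1}Z$ more carefully than the paper's shorthand $X^{T}X$).
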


\begin{proof}
From Theorem \ref{theorem_asymp}, we have that%
\[
n^{1/2}\hat{\xi}_{n}\rightarrow_{d}X=\mathcal{N}_{l}(0,P\Sigma P)
\]
where $X$ denotes such a multivariate Gaussian random vector.\newline
Furthermore%
\[
P_{n}\Sigma_{n}P_{n}\rightarrow_{p}P\Sigma P.
\]
Hence, for $n$ large enough, $P_{n}\Sigma_{n}P_{n}$ is invertible and by
Slutsky Theorem
\[
n\hat{\xi}_{n}^{T}(P_{n}\Sigma_{n}P_{n})^{-1}\hat{\xi}_{n}\rightarrow_{p}%
X^{T}X=_{d}\chi^{2}(l).
\]

\end{proof}

Since the weak convergence of $S_{n}$ to a chi-square distribution is
independent of the value of $\theta_{0}$, this result may be used in order to
build confidence regions related to the semi-parametric model.




\section{Numerical applications : Inference for Generalized Pareto family}

\subsection{Presentation}

The Generalized Pareto Distributions (GPD) are known to be heavy-tailed
distributions. They are classically parametrized by a location parameter $m$,
which we assume to be $0$, a scale parameter $\sigma$ and a shape parameter
$\nu$. They can be defined through their density :
\[
f_{\sigma,\nu}(x)=\left\{
\begin{array}
[c]{ll}%
\frac{1}{\sigma}\left(  1+\nu\frac{x}{\sigma}\right)  ^{-1-1/\nu
}\mathds{1}_{x>0} & \text{ if $\nu>0$}\\
\frac{1}{\sigma}\exp\left(  \frac{x}{\sigma}\right)  \mathds{1}_{x>0} & \text{
if $\nu=0$}\\
\frac{1}{\sigma}\left(  1+\nu\frac{x}{\sigma}\right)  ^{-1-1/\nu
}\mathds{1}_{-\sigma/\nu>x>0} & \text{ if $\nu<0$}%
\end{array}
\right.
\]
Let us remark that if $\nu\geq1$, the GPD does not have a finite expectation.
We perform different estimations of the scale and the shape parameter of a GPD
from samples with size $n=100$. \newline We will estimate the parameters in
the model composed by the distributions of all r.v's $X$ whose second, third
and fourth L-moments verify
\begin{equation}
\label{eq_lmom_gpd}\left\{
\begin{array}
[c]{lll}%
\lambda_{2} & = & \frac{\sigma}{(1-\nu)(2-\nu)}\\
\frac{\lambda_{3}}{\lambda_{2}} & = & \frac{1+\nu}{3-\nu}\\
\frac{\lambda_{4}}{\lambda_{2}} & = & \frac{(1+\nu)(2+\nu)}{(3-\nu)(4-\nu)}%
\end{array}
\right.
\end{equation}
for any $\sigma>0,\nu\in\mathbb{R}$. These distributions share their first
L-moments with those of a GPD with scale and shape parameter $\sigma$ and
$\nu$ (see \cite{hosking90}). This estimation will be compared with classical
parametric estimators detailed hereafter.

\subsection{Moments and L-moments calculus}

The variance and the skewness of the GPD are given by
\[
\left\{
\begin{array}
[c]{lll}%
var & = & \mathbb{E}[(X-\mathbb{E}[X])^{2}]=\frac{\sigma^{2}}{(1-\nu
)^{2}(1-2\nu)}\\
t_{3} & = & \mathbb{E}[\left(  \frac{X-\mathbb{E}[X]}{\mathbb{E}%
[(X-\mathbb{E}[X])^{2}]}\right)  ^{3}]=\frac{2(1+\nu)\sqrt{1-2\nu}}{1-3\nu}%
\end{array}
\right.
\]

Let us remark that $var$ and $t_{3}$ respectively exist since $\nu<1/2$ and
$\nu<1/3.$\newline On the other hand, the first L-moments are given by
equation \ref{eq_lmom_gpd}. Assuming $\nu<1$ entails existence of the L-moments.

\subsection{Simulations}

We perform $N=500$ runs of the following estimators

\begin{itemize}
\item the estimation proposed in this article (equation (\ref{lmde2_1})) for
the $\chi^{2}$-divergence and the modified Kullback ($KL_{m}$) divergence with
the constraints estimated on the L-moments of order $2,3,4$

\item the estimate defined through the L-moment method, based on the empirical
second L-moment $\hat{\lambda}_{2}$ and the fourth L-moment ratio $\hat
{\tau_{4}}=\frac{\lambda_{4}}{\lambda_{2}}$
\begin{align*}
\hat{\nu}  &  =\frac{7\hat{\tau_{4}}+3-\sqrt{(\hat{\tau_{4}}^{2}+98\hat
{\tau_{4}}+1}}{2(\hat{\tau_{4}}-1)}\\
\hat{\sigma}  &  =\hat{\lambda}_{2}(1-\hat{\nu})(2-\hat{\nu})
\end{align*}

\item the estimate defined through the moment method estimated from the
empirical variance $\hat{var}$ and skewness $\hat{t}_{3}$
\begin{align*}
\hat{\nu}  &  =\frac{2(1+\hat{t}_{3})\sqrt{1-2\hat{t}_{3}}}{1-3\hat{t}_{3}}\\
\hat{\sigma}  &  =\sqrt{\hat{var}(1-\hat{t}_{3})^{2}(1-2\hat{t}_{3})}%
\end{align*}

\item the MLE defined in the GPD family
\end{itemize}

We present the following different features for any of the above estimators

\begin{itemize}
\item the mean of the $N$ estimates based on the $N$ runs

\item the median of the $N$ estimates based on the $N$ runs

\item the standard deviation of the \ $N$ estimates

\item the $L_{1}$ distance between the estimated generalized Pareto density
and the true density, namely
\[
\int_{x\geq0}|f_{\hat{\sigma},\hat{\nu}}(x)-f_{\sigma,\nu}(x)|dx
\]
which, by Scheff\'{e} Lemma, equals twice the maximum error committed
substituting $f_{\sigma,\nu}$ by $f_{\hat{\sigma},\hat{\nu}}$
\[
\int_{x\geq0}|f_{\hat{\sigma},\hat{\nu}}(x)-f_{\sigma,\nu}(x)|dx=2\sup
_{A\in\mathcal{B}(\mathbb{R})}\left\vert \int_{A}f_{\hat{\sigma},\hat{\nu}%
}(x)-\int_{A}f_{\sigma,\nu}(x)|dx\right\vert .
\]

\end{itemize}

Finally, we present four different scenarios which illustrate robusness
properties of any of the above estimators, as well as their behavior under misspecification:

\begin{itemize}
\item a first scenario without outliers : samples of size $30$ or $100$ are
drawn from a GPD

\item two more scenarios with $10\%$ outliers : samples of size $27$ or $90$
are drawn from a GPD. The remaining points are drawn from a Dirac the value of
which depends on the shape parameter

\item a fourth scenario without outliers but with misspecification : samples
of size $30$ or $100$ are drawn from a Weibull distribution.
\end{itemize}

\begin{table}[ptb]
\center
\begin{tabular}
[c]{|c|c||c|c|c||c|c|c|}\cline{3-8}%
\multicolumn{2}{c||}{} & \multicolumn{3}{|c|}{$n=30$} &
\multicolumn{3}{|c|}{$n=100$}\\\hline
Estimation method & Parameter & Mean & Median & StD & Mean & Median &
StD\\\hline
$\chi^{2}$-divergence & $\sigma$ & 4.68 & 4.41 & 2.52 & 3.80 & 3.75 &
0.90\\\hline
$KL_{m}$-divergence & $\sigma$ & 6.44 & 4.77 & 8.02 & 4.08 & 3.95 &
4.00\\\hline
L-moment method & $\sigma$ & 5.67 & 4.98 & 3.44 & 3.96 & 3.80 & 1.09\\\hline
Moment method & $\sigma$ & 17.17 & 10.45 & 62.95 & 17.15 & 11.64 &
19.52\\\hline
MLE & $\sigma$ & 3.33 & 3.17 & 1.14 & 3.08 & 3.07 & 0.57\\\hline\hline
$\chi^{2}$-divergence & $\nu$ & 0.38 & 0.39 & 0.24 & 0.55 & 0.55 &
0.16\\\hline
$KL_{m}$-divergence & $\nu$ & 0.37 & 0.38 & 0.24 & 0.38 & 0.37 & 0.16\\\hline
L-moment method & $\nu$ & 0.33 & 0.38 & 0.31 & 0.54 & 0.56 & 0.18\\\hline
Moment method & $\nu$ & 0.08 & 0.12 & 0.12 & 0.21 & 0.22 & 0.06\\\hline
MLE & $\nu$ & 0.61 & 0.63 & 0.33 & 0.68 & 0.69 & 0.17\\\hline
\end{tabular}
\caption{Estimates of GPD scale and shape parameters for $\nu=0.7$ and
$\sigma=3$ (the moment method has little sense since $\nu>0.5$) for the first
scenario without outliers}%
\label{numeric1}%
\end{table}

\begin{table}[ptb]
\center
\begin{tabular}
[c]{|c|c||c|c|c||c|c|c|}\cline{3-8}%
\multicolumn{2}{c||}{} & \multicolumn{3}{|c|}{$n=30$} &
\multicolumn{3}{|c|}{$n=100$}\\\hline
Estimation method & Parameter & Mean & Median & StD & Mean & Median &
StD\\\hline
$\chi^{2}$-divergence & $\sigma$ & 12.43 & 12.24 & 2.83 & 12.29 & 12.21 &
1.62\\\hline
$KL_{m}$-divergence & $\sigma$ & 24.01 & 19.36 & 49.38 & 27.30 & 20.99 &
48.75\\\hline
L-moment method & $\sigma$ & 22.27 & 20.83 & 5.69 & 21.68 & 21.03 &
3.09\\\hline
Moment method & $\sigma$ & 80.97 & 76.27 & 20.89 & 80.93 & 76.84 &
31.09\\\hline
MLE & $\sigma$ & 3.06 & 2.88 & 1.08 & 2.88 & 2.86 & 0.55\\\hline\hline
$\chi^{2}$-divergence & $\nu$ & 0.55 & 0.55 & 0.05 & 0.54 & 0.54 &
0.04\\\hline
$KL_{m}$-divergence & $\nu$ & 0.50 & 0.52 & 0.24 & 0.54 & 0.49 & 0.27\\\hline
L-moment method & $\nu$ & 0.54 & 0.54 & 0.06 & 0.54 & 0.53 & 0.04\\\hline
Moment method & $\nu$ & 0.07 & 0.08 & 0.02 & 0.08 & 0.07 & 0.03\\\hline
MLE & $\nu$ & 1.48 & 1.44 & 0.22 & 1.50 & 1.49 & 0.11\\\hline
\end{tabular}
\caption{Estimates of GPD scale and shape parameters for $\nu=0.7$ and
$\sigma=3$ for a sample with $10\%$ outliers of value $300$ (the moment method
has little meaning since $\nu>0.5$)}%
\label{numeric2}%
\end{table}

\begin{table}[ptb]
\center
\begin{tabular}
[c]{|c|c||c|c|c||c|c|c|}\cline{3-8}%
\multicolumn{2}{c||}{} & \multicolumn{3}{|c|}{$n=30$} &
\multicolumn{3}{|c|}{$n=100$}\\\hline
Estimation method & Parameter & Mean & Median & StD & Mean & Median &
StD\\\hline
$\chi^{2}$-divergence & $\sigma$ & 4.32 & 4.23 & 0.91 & 4.45 & 4.42 &
0.51\\\hline
$KL_{m}$-divergence & $\sigma$ & 5.04 & 4.90 & 1.15 & 5.07 & 5.08 &
0.67\\\hline
L-moment method & $\sigma$ & 5.18 & 5.04 & 1.44 & 5.11 & 5.04 & 0.75\\\hline
Moment method & $\sigma$ & 8.64 & 8.44 & 0.92 & 8.54 & 8.48 & 0.50\\\hline
MLE & $\sigma$ & 3.12 & 3.08 & 0.87 & 3.08 & 3.05 & 0.49\\\hline\hline
$\chi^{2}$-divergence & $\nu$ & 0.27 & 0.28 & 0.08 & 0.27 & 0.27 &
0.05\\\hline
$KL_{m}$-divergence & $\nu$ & 0.25 & 0.25 & 0.09 & 0.24 & 0.24 & 0.05\\\hline
L-moment method & $\nu$ & 0.24 & 0.24 & 0.10 & 0.24 & 0.24 & 0.06\\\hline
Moment method & $\nu$ & 0.01 & 0.02 & 0.04 & 0.01 & 0.02 & 0.02\\\hline
MLE & $\nu$ & 0.56 & 0.54 & 0.17 & 0.55 & 0.55 & 0.09\\\hline
\end{tabular}
\caption{Estimates of GPD scale and shape parameters for $\nu=0.1$ and
$\sigma=3$ for a sample with $10\%$ outliers of value $30$}%
\label{numeric3}%
\end{table}

\begin{table}[ptb]
\center
\begin{tabular}
[c]{|c||c|c|c|c||c|c|c|c|}\cline{2-9}%
\multicolumn{1}{c||}{} & \multicolumn{4}{|c|}{$n=30$} &
\multicolumn{4}{|c|}{$n=100$}\\\hline
Estimation method & Sc 1 & Sc 2 & Sc 3 & Sc 4 & Sc 1 & Sc 2 & Sc 3 & Sc
4\\\hline
$\chi^{2}$-divergence & 2.53 & 7.20 & 3.16 & 2.63 & 1.55 & 7.32 & 3.28 &
1.80\\\hline
L-moment method & 3.10 & 10.07 & 4.09 & 4.31 & 1.70 & 9.93 & 4.07 &
3.51\\\hline
Moment method & 6.79 & 14.47 & 7.07 & 8.69 & 6.91 & 14.42 & 6.98 &
9.98\\\hline
MLE & 1.78 & 2.83 & 2.68 & 11.69 & 0.97 & 2.42 & 2.33 & 9.25\\\hline
\end{tabular}
\caption{$L_{1}$-distances (to be multiplied by $10^{-4}$) between GPD
densities for different scenarios; Scenario (Sc) 1 corresponds to a simulated
GPD with $\nu=0.7$ and $\sigma=3$; Scenario 2 corresponds to a simulated GPD
with $\nu=0.7$, $\sigma=3$ and $10\%$ outliers of value $300$; Scenario 3
corresponds to a simulated GPD with $\nu=0.1$, $\sigma=3$ and $10\%$ outliers
of value $30$; Scenario 4 corresponds to a simulated Weibull distribution with
$\nu=0.4$ and $\sigma=3$}%
\end{table}

\begin{figure}[h!]
\centering
\subfloat[Simulated GPD with $\nu=0.7$ and $\sigma=3$]{\includegraphics[width=3in,height=3in]{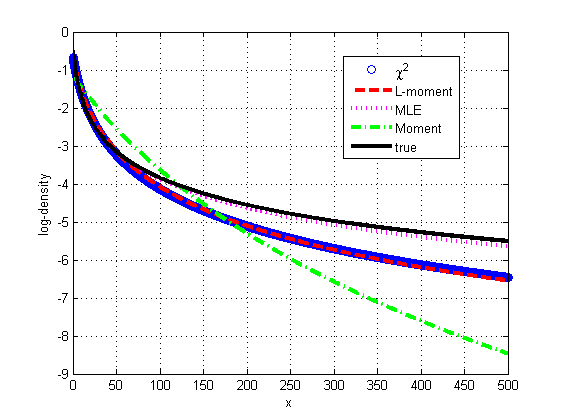}}
\subfloat[Simulated GPD with $\nu=0.7$, $\sigma=3$ and $10\%$ outliers of value $300$]{\includegraphics[width=3in,height=3in]{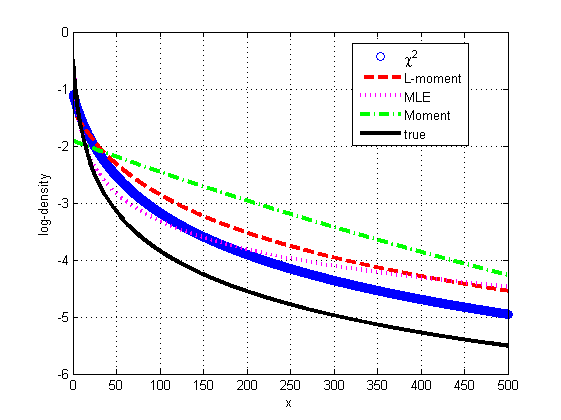}}\newline%
\subfloat[Simulated GPD with $\nu=0.1$, $\sigma=3$ and $10\%$ outliers of value $30$]{\includegraphics[width=3in,height=3in]{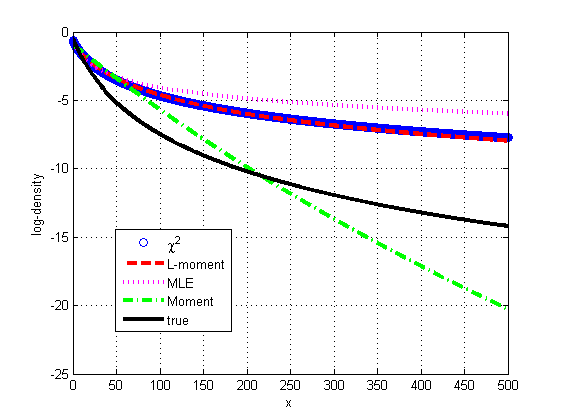}}
\subfloat[Simulated Weibull distribution with $\nu=0.4$ and $\sigma=3$]{\includegraphics[width=3in,height=3in]{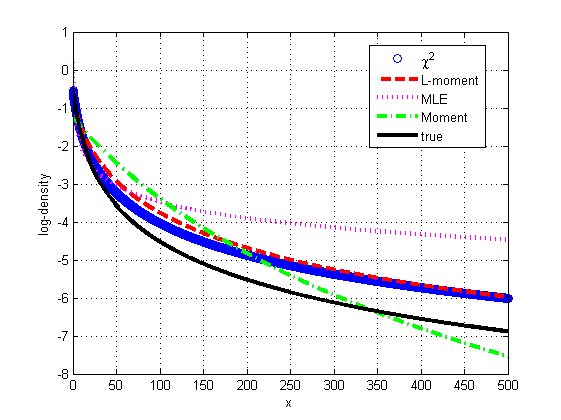}}\caption{Estimated
GPD densities with estimated parameters for simulated scenarios (with a
logarithmic scale)}%
\end{figure}

Unsurprisingly, the MLE performs well under the model and the L-moment method
has an overall better behavior than the classical moment method for the
considered heavy-tailed distributions (see Table \ref{numeric1}). Furthermore,
we observe that the $\chi^{2}$- divergence is more robust than the modified
Kullback as indeed expected.\newline The interesting result lies in their
behavior with outliers and misspecification. Indeed, we can see that
L-moment-based estimators perform well on the shape parameter whereas the MLE
provides a good estimation of the scale parameter but overestimates the shape
parameter. In that sense, the L-moments method can be used for the robust
estimation of the shape parameter of a GPD in case of contamination by
outliers. However, even with outliers, the MLE performs well in term of
$L_{1}$-distance computed on the estimated densities. It is under
misspecification that the performance of the MLE drops as measured by the
$L_{1}$ criterion. This confirms the flexibility of models defined only
through moment or L-moment equations that are less dependent on the GPD
model.\newline Moreover, the $L_{1}$-distance between the model and its
estimation has an order between $10^{-3}$ and $10^{-4}$. The error committed
by the estimation under models defined through L-moments conditions is the
most stable over the proposed scenarios. We can then affirm that we can
estimate the probability of events if the true value of this probability is of
order $10^{-3}$ (the error of estimation for the estimator based on L-moments
method would approximately be of $30\%$ depending on the size of the sample
and the scenario).

\appendix

\section{Proofs}

\subsection{Proof of Lemma \ref{transport1}}

Let $x\in\mathbb{R}$. We denote by $F$ the cdf of $X$ and by $A_{t}$ the
event
\[
A_{t}=\left\{  x\in\mathbb{R}\text{ s.t. }F(x)\geq t\right\}
\]
We then have $Q(t)=\inf A_{t}$. We wish to prove :
\begin{equation}
\left\{  t\in\lbrack0;1]\text{ s.t. }Q(t)\leq x\right\}  =\left\{  t\in
\lbrack0;1]\text{ s.t. }t\leq F(x)\right\}  \label{equivalence}%
\end{equation}
We temporarily admit this assertion. Then%
\[
\mathbb{P}[Q(U)\leq x]=\mathbb{P}[U\leq F(x)]=F(x)
\]
which ends the proof. It remains to prove (\ref{equivalence}).\newline First,
the definition of $Q$ yields
\[
\left\{  t\leq F(x)\right\}  \Rightarrow\left\{  x\in A_{t}\right\}
\Rightarrow\left\{  Q(t)\leq x\right\}  .
\]
Secondly, let $t$ be such that $Q(t)\leq x$. Then by monotonicity of $F$,
$F(Q(t))\leq F(x)$. We then claim that
\[
Q(t)\in A_{t}.
\]
Indeed, let us suppose the contrary and consider a strictly decreasing
sequence $x_{n}\in A_{t}$ such that
\[
\lim_{n\rightarrow\infty}x_{n}=\inf A_{t}=Q(t).
\]
By right continuity of $F$
\[
\lim_{n\rightarrow\infty}F(x_{n})=F(Q(t))
\]
and, on the other hand, by definition of $A_{t}$,
\[
\lim_{n\rightarrow\infty}F(x_{n})\geq t
\]
i.e. $Q(t)\in A_{t}$ which contradicts the hypothesis. Then $Q(t)\in A_{t}$
i.e. $t\leq F(Q(t))$ thus $t\leq F(x)$. We have proved that
\[
\left\{  Q(t)\leq x\right\}  \Rightarrow\left\{  t\leq F(x)\right\}  .
\]

\subsection{Proof of Lemma \ref{stieljes}}

Let us recall that the support of a measure $\mu$ defined on $X\subset
\mathbb{R}$ is the largest closed set $C\subset X$ such that
\[
U\in B(X)\text{ and }U\cap C\neq\emptyset\Rightarrow\mu(U\cap C)>0
\]
where\ $\ B(X)$ denotes the Borel sets in $X$. Let $S$ be the support of
$\mathbf{F}^{-1}$. Then $[0;1]\backslash S$ is an open set in $[0;1]$ i.e. a
countable union of intervals $\cup_{i\geq1}]t_{2i},t_{2i+1}[$ and
\begin{align*}
\int_{0}^{1}a(F^{-1}(t))dt  &  =\int_{S}a(F^{-1}(t))dt+\sum_{i\geq1}%
\int_{]t_{2i};t_{2i+1}[}a(F^{-1}(t))dt\\
&  =\int_{F^{-1}(S)}a(x)dF(x)+\sum_{i\geq1}a(F^{-1}(t_{2i}))(t_{2i+1}%
-t_{2i})\\
&  =\int_{F^{-1}(S)}a(x)dF(x)+\sum_{i\geq1}\int_{\{F^{-1}(t_{2i}%
)\}}a(x)dF(x)\\
&  =\int_{F^{-1}(S)\cup\left(  \cup_{i\geq1}\{F^{-1}(t_{2i})\}\right)
}a(x)dF(x).
\end{align*}
The second equality stems from the definition of the quantile as
left-continuous function and from the fact that $F^{-1}$ is strictly monotone
on $S$.\newline As $F^{-1}$ is constant on the open interval $[t_{2i}%
;t_{2i+1}[$, $\{F^{-1}(t_{2i})\}=F^{-1}([t_{2i};t_{2i+1}[)$. Hence
\begin{align*}
F^{-1}(S)\cup\left(  \cup_{i\geq1}\{F^{-1}(t_{2i})\}\right)   &
=F^{-1}([0;1])\\
&  =\{x\in\mathbb{R}\text{ s.t. there exists $t$ with $F^{-1}(t)=x$%
}\}=supp(F).
\end{align*}
We conclude the first part of the proof since
\[
\int_{supp(F)}a(x)dF(x)=\int_{\mathbb{R}}a(x)dF(x).
\]

The second part of the proof can be proved similarly since the above arguments
are not particular to a specific measure.

\subsection{Proof of Proposition \ref{prop2}}

The proof is directly adapted from the proof of Theorem II.2 of Csisz\'ar et
al. \cite{csiszar99}.\newline Let us begin with the fundamental lemma inspired
from Theorem 2.9 of Borwein and Lewis\cite{borwein91}.\newline

\begin{lemma}
\label{LemmaRestriction}Let $C:\Omega\rightarrow\mathbb{R}^{l}$ be an array of
bounded functions such that
\[
\int_{\Omega}\Vert C(x)\Vert d\mu(x)<\infty.
\]
We denote
\[
L_{C,a}=\left\{  g\text{ s.t. }\int_{\Omega}g(t)C(t)d\mu(t)=a\right\}  .
\]
If there exists some $g$ in $L_{C,a}$ such that $a_{\varphi}<g<b_{\varphi}$
$\mu$-a.s and $\int_{\Omega}\Vert g(t)C(t)\Vert d\mu(t)<\infty$, then there
exists $a_{\varphi}^{\prime}>a_{\varphi}$, $b_{\varphi}^{\prime}<b_{\varphi}$
and $g_{b}\in L_{C,a}$ such that $a_{\varphi}^{\prime}\leq g_{b}(x)\leq
b_{\varphi}^{\prime}$ for all $x\in\Omega$.
\end{lemma}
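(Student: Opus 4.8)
The plan is to obtain $g_b$ by truncating the given feasible $g$ at levels close to the endpoints $a_\varphi,b_\varphi$ of $\mathrm{dom}(\varphi)$, and then repairing the small defect this introduces in the constraint by a correction supported on a \emph{fixed} set on which $g$ stays comfortably inside $(a_\varphi,b_\varphi)$. As a preliminary reduction I would assume that $C_1,\dots,C_l$ are linearly independent in $L^1(\mu)$: any combination $\langle\lambda,C\rangle$ vanishing $\mu$-a.e. imposes no constraint at all (both $a=\int gC\,d\mu$ and the defect that appears below are automatically orthogonal to such a $\lambda$), so one may pass to a maximal linearly independent subfamily. Under this reduction, for a measurable $S\subset\Omega$ set $m(S):=\inf_{\Vert\lambda\Vert=1}\int_S|\langle\lambda,C(x)\rangle|\,\mu(dx)$; since $\lambda\mapsto\int_\Omega|\langle\lambda,C\rangle|\,d\mu$ is finite (as $\int\Vert C\Vert\,d\mu<\infty$), continuous, and strictly positive on the compact unit sphere, one gets $m(\Omega)>0$.

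Next I would set up the truncations. Let $(\alpha_n)\subset(a_\varphi,1)$ decrease to $a_\varphi$ (replace $a_\varphi$ by $-\infty$ in the obvious way when $a_\varphi=-\infty$), let $(\beta_n)\subset(1,b_\varphi)$ increase to $b_\varphi$ (resp. to $+\infty$), and put $g_n:=\max(\alpha_n,\min(g,\beta_n))$, so $\alpha_n\le g_n\le\beta_n$. Since $a_\varphi<g<b_\varphi$ $\mu$-a.s., one has $g_n\to g$ $\mu$-a.e. The difference $g_n-g$ vanishes off $\{g<\alpha_n\}\cup\{g>\beta_n\}$; on the first set $|g_n-g|<\alpha_n-a_\varphi$ if $a_\varphi>-\infty$ (dominated by a constant times $\Vert C\Vert$) and $|g_n-g|\le|g|$ if $a_\varphi=-\infty$ (dominated by $|g|\,\Vert C\Vert$), and symmetrically on the second set. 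Dominated convergence, using both $\int\Vert C\Vert\,d\mu<\infty$ and $\int\Vert gC\Vert\,d\mu<\infty$, then gives $e_n:=\int_\Omega(g_n-g)\,C\,d\mu\to 0$ in $\mathbb{R}^l$.

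The key point is the choice of a fixed core set. I claim there are $\alpha_0\in(a_\varphi,1)$ and $\beta_0\in(1,b_\varphi)$ such that $E_0:=\{x:\alpha_0\le g(x)\le\beta_0\}$ has $m(E_0)>0$. If not, then for every admissible pair there is $\lambda$ of norm one with $\langle\lambda,C\rangle=0$ $\mu$-a.e. on the corresponding set; applying this to the pairs $(\alpha_n,\beta_n)$, extracting a subsequence along which the associated unit vectors converge to some $\lambda_\infty$ of norm one, and using that $\{\alpha_n\le g\le\beta_n\}$ increases to $\Omega$ modulo a $\mu$-null set, one obtains $\langle\lambda_\infty,C\rangle=0$ $\mu$-a.e. on $\Omega$, contradicting $m(\Omega)>0$. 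Fix such $\alpha_0,\beta_0$ and put $\delta_0:=\tfrac12\min(\alpha_0-a_\varphi,\,b_\varphi-\beta_0)>0$.

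To conclude, choose $n$ so large that $\alpha_n<\alpha_0$, $\beta_n>\beta_0$, and $\Vert e_n\Vert\le\delta_0\,m(E_0)$. The convex symmetric set $\{\int_{E_0}h\,C\,d\mu:\ h\in L^\infty(E_0),\ \Vert h\Vert_\infty\le1\}$ has support function $\lambda\mapsto\int_{E_0}|\langle\lambda,C\rangle|\,d\mu\ge m(E_0)\Vert\lambda\Vert$, hence contains the closed ball of radius $m(E_0)$; scaling by $\delta_0$ yields some $h$ with $\Vert h\Vert_\infty\le\delta_0$, supported on $E_0$, and $\int_{E_0}h\,C\,d\mu=-e_n$. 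Set $g_b:=g_n+h$. On $E_0$ one has $g_n=g\in[\alpha_0,\beta_0]$, so $g_b\in[\alpha_0-\delta_0,\beta_0+\delta_0]$, and off $E_0$ one has $g_b=g_n\in[\alpha_n,\beta_n]$; thus $g_b$ takes values in a fixed compact subinterval of $(a_\varphi,b_\varphi)$, and $a_\varphi':=\min(\alpha_n,\alpha_0-\delta_0)>a_\varphi$, $b_\varphi':=\max(\beta_n,\beta_0+\delta_0)<b_\varphi$ do the job. Finally $\int_\Omega g_b\,C\,d\mu=\int_\Omega g_nC\,d\mu+\int_{E_0}hC\,d\mu=(a+e_n)-e_n=a$, and $g_b$ being bounded, $\int\Vert g_bC\Vert\,d\mu<\infty$; hence $g_b\in L_{C,a}$. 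I expect the main obstacle to be precisely the existence of the fixed core set $E_0$ with $m(E_0)>0$ — that the effective linear independence of the $C_i$ is not lost upon restriction to $\{\alpha_0\le g\le\beta_0\}$ once $\alpha_0,\beta_0$ are pushed toward the endpoints — which is what forces the compactness-and-limit argument on the unit sphere; the truncation and the support-function correction are then routine.
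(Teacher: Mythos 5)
Your proof is correct and follows essentially the same route as the paper's: truncate $g$ at levels approaching $a_\varphi,b_\varphi$, control the resulting defect in the constraint by dominated convergence, and repair it with a uniformly small bounded correction supported on a fixed set where $g$ stays away from the endpoints, the key point being a non-degeneracy of $C$ on that set established by a compactness argument on the unit sphere. Your quantitative formulation via $m(E_0)>0$ and the support-function argument is a cleaner rendering of the paper's claim that $L_{n_0}(\delta)$ is a neighborhood of the origin in the span $L$, but the underlying idea is identical.
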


\begin{proof}
Let $L$ denotes the subspace of $\mathbb{R}^{l}$ composed by the vectors
representable as $\int_{\Omega}gCd\mu$ for some $g:\Omega\rightarrow
\mathbb{R}^{l}$. Let us denote by $a_{n}$ a decreasing sequence $a_{n}%
\rightarrow a_{\varphi}$, by $b_{n}$ a increasing one $b_{n}\rightarrow
b_{\varphi}$ and let $T_{n}$ be the set
\[
T_{n}=\left\{  x\in\Omega\text{ s.t. }a_{n}\leq g(x)\leq b_{n}\right\}  .
\]
We first claim that, for $n$ large enough
\[
L=L_{n}=\left\{  \int_{\Omega}hCd\mu\text{ with $h(x)=0$ if $x\not \in T_{n}$
and $h$ bounded}\right\}  .
\]
Indeed, if not, we can build a sequence of vectors $v_{n}$ such that $\Vert
v_{n}\Vert=1$, $v_{n}\in L^{\perp}$ and $v_{n}\rightarrow v\in L$.
Furthermore, $v_{n}\in L^{\perp}$ means
\[
\langle v_{n},\int_{\Omega}hCd\mu\rangle=\int_{\Omega}h\langle v_{n},C\rangle
d\mu=0
\]
then $\langle v_{n},C\rangle=0$ for all $x\in T_{n}$ $\mu$-a.s. Hence $\langle
v,C\rangle=0$ $\mu$-a.s. and $v\in L^{\perp}$ which contradicts $v\in L$ with
$\Vert v\Vert=1$.\newline Let us then fix some $n_{0}$ such that $L_{n_{0}}%
=L$. We denote by
\[
L_{n}(\delta)=\left\{  \int_{\Omega}hCd\mu\text{ with $h(x)=0$ if
$x\not \in T_{n}$ and $|h(x)|<\delta$ for $x\in\Omega$}\right\}  .
\]
Then, the affine hull of $L_{n}(\delta)$ is the vector space $L$ and $0\in
L_{n}(\delta)$. We can consider the function $g_{n}$
\[
g_{n}(x)=\left\{
\begin{array}
[c]{lll}%
a_{n} & \text{ if } & g(x)<a_{n}\\
g(x) & \text{ if } & b_{n}\leq g(x)\leq a_{n}\\
b_{n} & \text{ if } & g(x)>b_{n}%
\end{array}
\right.
\]
Then $\Vert\int_{\Omega}(g_{n}-g)Cd\mu\Vert\rightarrow_{n\rightarrow\infty}0$.
Indeed we can apply the dominated convergence theorem since, for any
$x\in\Omega$, $g_{n}(x)\rightarrow g$ and
\begin{align*}
\Vert(g_{n}(x)-g(x))C(x)\Vert &  =\Vert\mathds{1}_{g(x)<a_{n}}(a_{n}%
-g(x))C(x)+\mathds{1}_{g(x)>b_{n}}(g(x)-b_{n})C(x)\Vert\\
&  \leq(\Vert a_{0}-g(x)\Vert+\Vert b_{0}-g(x)\Vert)\Vert C(x)\Vert\\
&  \leq(\Vert a_{0}\Vert+\Vert b_{0}\Vert)\Vert C(x)\Vert+2\Vert
g(x)\Vert\Vert C(x)\Vert
\end{align*}
which is $\mu$-measurable by hypothesis.\newline We conclude that
$\int_{\Omega}(g_{n}-g)Cd\mu\in L_{n_{0}}(\delta)$ for $n$ large enough
because $0\in L_{n_{0}}(\delta)$. Hence there exists $h$ such that
$\int_{\Omega}(g_{n}-g)Cd\mu=\int_{\Omega}hCd\mu$, $|h(x)|=0$ for
$x\not \in T_{n_{0}}$ and $|h(x)|<\delta$ for $x$ in $T_{n_{0}}$.\newline
Therefore for $x\in\Omega$, $\min(a_{n},a_{n_{0}}-\delta)\leq g_{n}%
(x)+h(x)\leq\min(b_{n},b_{n_{0}}+\delta)$ and $\int_{\Omega}(g_{n}%
+h)Cd\mu=\int_{\Omega}gCd\mu$. As $\delta$ is arbitrarily small, $h$ is the
null function.
\end{proof}

We can now prove the duality equality. Let note for $c\in\mathbb{R}^{l}$
$I(c)=\inf_{\int gCd\mu=c}\int_{\Omega}\varphi(g)d\mu$ and
\[
J(c)=\left\{
\begin{array}
[c]{ll}%
0 & \text{ if $c=a$}\\
+\infty & \text{ otherwise}%
\end{array}
\right.  .
\]
Then
\[
\inf_{g\in L_{C,a}}\int\varphi(g)d\mu=\inf_{c\in\mathbb{R}^{l}}I(c)+J(c).
\]
Recall that the Fenchel duality theorem (\cite{rockafellar70} p327) states
that if $ri(dom(I))\cap ri(dom(J))\neq\emptyset$ then
\[
\inf_{c\in\mathbb{R}^{l}}I(c)+J(c)=\max_{\xi\in\mathbb{R}^{l}}-I^{\ast}%
(\xi)-J^{\ast}(-\xi).
\]
We prove that $ri(dom(I))\cap ri(dom(J))\neq\emptyset$. Note that
$ri(dom(J))=\{a\}$. It suffices then to prove that $a$ belongs to
$int(dom(I))$ for the topology induced by $L$. By the above Lemma
\ref{LemmaRestriction} there exists $g_{b}$ such that $a_{\varphi}<a_{\varphi
}^{\prime}\leq g_{b}(x)\leq b_{\varphi}^{\prime}<b_{\varphi}$ for all
$x\in\Omega$. Since $a+L_{n}(\delta)$ is a neighborhood of $a$ included in
$dom(I)$ for $\delta$ sufficiently small, it holds that $a\in int(L_{n}%
(\delta))\subset int(dom(I))$.\newline It remains now to compute the
conjugates of $I$ and $J$ .
\begin{align*}
I^{\ast}(\xi)  &  =\sup_{c\in\mathbb{R}^{l}}\langle\xi,c\rangle-\inf_{g,\int
gCd\mu=c}\varphi(g)d\mu\\
&  =\sup_{c\in\mathbb{R}^{l}}\sup_{g,\int gCd\mu=c}\langle\xi,c\rangle
-\varphi(g)d\mu\\
&  =\sup_{g}\langle\xi,\int gCd\mu\rangle-\varphi(g)d\mu\\
&  =\sup_{g}\int\langle\xi,C\rangle g-\varphi(g)d\mu\\
&  =\int\psi(\langle\xi,C\rangle)d\mu
\end{align*}
This equality is referred to as the integral representation of $I^{\ast}$. The
last equality can be rigorously justified (see for example \cite{matus12}%
).\newline Furthermore, $J^{\ast}(-\xi)=-\langle\xi,a\rangle$ which closes the
first part of the proof, namely
\[
\inf_{g\in L_{C,a}}\int_{\Omega}\varphi\left(  g\right)  d\mu=\sup_{\xi
\in\mathbb{R}^{l}}\langle\xi,a\rangle-\int_{\Omega}\psi(\langle\xi
,C(x)\rangle)d\mu.
\]
As we assume $\psi$ differentiable, then $\xi\mapsto\langle\xi,a\rangle
-\int_{\Omega}\psi(\langle\xi,C(x)\rangle)d\mu$ is differentiable as well. It
follows that any critical point is the solution of
\[
\int_{\Omega}\psi^{\prime}(\langle\xi,C(x)\rangle)C(x)d\mu=a.
\]
Furthermore, as $\varphi$ is strictly convex, $\psi$ is strictly concave and
for $\xi,\xi^{\prime}\in\mathbb{R}^{l}$ and $t\in\lbrack0;1]$ it holds
\begin{align*}
&  \langle(1-t)\xi+t\xi^{\prime},a\rangle-\int_{\Omega}\psi(\langle
(1-t)\xi+t\xi^{\prime},C(x)\rangle)d\mu\\
&  =\langle(1-t)\xi+t\xi^{\prime},a\rangle-\int_{\Omega}\psi((1-t)\langle
\xi,C(x)\rangle+t\langle\xi^{\prime},C(x)\rangle)d\mu\\
&  <(1-t)\left[  \langle\xi,a\rangle-\int_{\Omega}\psi(\xi,C(x)\rangle
)d\mu\right]  +t\left[  \langle\xi^{\prime},a\rangle-\int_{\Omega}\psi
(\xi^{\prime},C(x)\rangle)d\mu\right]
\end{align*}
i.e. the functional $\xi\rightarrow\langle\xi,a\rangle-\int_{\Omega}\psi
(\xi,C(x)\rangle)d\mu$ is strictly convex which proves the uniqueness of
$\xi^{\ast}$.\newline The continuity of $a\mapsto\xi^{\ast}(a)$ comes from the
implicit function theorem. If we note $D(\xi)=\int\psi^{\prime}(\langle
\xi,C(x)\rangle)C(x)d\mu$ then $D$ is continuously differentiable with a
Jacobian given by
\[
J_{D}(\xi)=\int\psi^{\prime\prime} (\langle\xi,C(x)\rangle)C(x)C(x)^{T}d\mu
\]
which is positive definite thanks to the strict convexity of $\psi$.

\subsection{Proof of Proposition \ref{prop40}}

Note
\[
D_{\varphi}(x,y) = \sum_{i=1}^{n-1}\varphi\left(  \frac{y_{i+1}-y_{i}%
}{x_{i+1:n}-x_{i:n}}\right)  (x_{i+1:n}-x_{i:n}).
\]
Assuming that (\ref{Prop1}) holds then (\ref{Prop2}) follows from equation
(\ref{lmom_est}). Indeed, since $\varphi$ is infinite for negative values, it
holds
\begin{align*}
& \inf_{\substack{y\in\mathbb{R}^{n}\\\sum_{i=1}^{n-1}K(i/n)(y_{i+1:n}%
-y_{i:n})=f(\theta)}}D_{\varphi}(x,y)\\
& =\inf_{\substack{(y_{1}<...<y_{n})\in\mathbb{R}^{n}\\\sum_{i=1}%
^{n-1}K(i/n)(y_{i+1}-y_{i})=f(\theta)}}D_{\varphi}(x,y).
\end{align*}
We now turn to (\ref{Prop1}). The minimization problem can be decomposed into
\begin{align*}
&\inf_{T\in\mathcal{C}_{A}:T\#\mathbf{F}_{n}\in L_{\theta}}\int_{\mathbb{R}%
}\varphi\left(  \frac{dT}{d\lambda}\right)  d\lambda\nonumber\\
=&\inf_{\substack{(y_{1},...,y_{n})\in\mathbb{R}^{n}\\\sum_{i=1}^{n}K(i/n)(y_{i+1:n}-y_{i:n}%
)=f(\theta)}}I_{\varphi}(x,y)
\end{align*}
by denoting
\[
I_{\varphi}(x,y) = \inf_{T\in\mathcal{C}_{A}:T(x_{i:n})=y_{i}}\int
_{\mathbb{R}}\varphi\left(  \frac{dT}{d\lambda}\right)  d\lambda
\]
This minimization problem has an explicit solution.
Indeed
\[
I_{\varphi}(x,y)=\inf_{T\in\mathcal{C}_{A}:T(x_{i+1:n}%
)-T(x_{i:n})=y_{i+1}-y_{i}}\int_{\mathbb{R}}\varphi\left(  \frac{dT}{d\lambda
}\right)  d\lambda.
\]
If $T\in\mathcal{C}_{A}$ satisfies $T(x_{i:n})=y_{i}$ for $1\leq i\leq n$
$\ $then  as $T$ is absolutely continuous, it holds for all $1\leq i\leq n-1$
\[
\int_{x_{i:n}}^{x_{i+1:n}}\frac{dT}{d\lambda}d\lambda=y_{i+1}-y_{i}.
\]
Conversely, if $S:\mathbb{R}\rightarrow\mathbb{R}$ is such that for all $i$
between $1$ and $n-1$
\[
\int_{x_{i:n}}^{x_{i+1:n}}S(x)\lambda(dx)=y_{i+1}-y_{i}%
\]
then $T:x\mapsto\int_{0}^{x}S(x)\lambda(dx)\in\mathcal{C}_{A}$ and
$T(x_{i+1:n})-T(x_{i:n})=y_{i+1}-y_{i}$.\newline We thus obtain
\[
I_{\varphi}(x,y)=\inf_{\substack{S:\int_{\mathbb{R}%
}S(x)\mathds{1}_{\{x_{i:n}\leq x\leq x_{i+1:n}\}}\lambda(dx)\\=y_{i+1}%
-y_{i},1\leq i\leq n-1}}\int_{\mathbb{R}}\varphi\left(  S(x)\right)
\lambda(dx)
\]
From Proposition \ref{prop2}, it then holds, since $\psi(0)=0$
\begin{align*}
& \inf_{S:\int_{\mathbb{R}}S(x)\mathds{1}_{\{x_{i:n}\leq x\leq x_{i+1:n}%
\}}\lambda(dx)=y_{i+1}-y_{i}}\int_{\mathbb{R}}\varphi\left(  S(x)\right)
\lambda(dx)\\
& =\sup_{(\xi_{1},...,\xi_{n-1})\in\mathbb{R}^{n-1}}\sum_{i=1}^{n-1}\xi
_{i}(y_{i+1}-y_{i})-\psi(\xi_{i})(x_{i+1:n}-x_{i:n})\\
& =\sum_{i=1}^{n-1}\sup_{\xi_{i}\in\mathbb{R}}\xi_{i}(y_{i+1}-y_{i})-\psi
(\xi_{i})(x_{i+1:n}-x_{i:n})\\
& =\sum_{i=1}^{n-1}(x_{i+1:n}-x_{i:n})\sup_{\xi_{i}\in\mathbb{R}}\xi_{i}%
\frac{y_{i+1}-y_{i}}{x_{i+1:n}-x_{i:n}}-\psi(\xi_{i})\\
& =\sum_{i=1}^{n-1}(x_{i+1:n}-x_{i:n})\varphi\left(  \frac{y_{i+1}-y_{i}%
}{x_{i+1:n}-x_{i:n}}\right)
\end{align*}
which concludes the proof.

\subsection{Proof of Theorem \ref{th1}}

The arguments of this proof and of the following one are similar to the ones
given by Newey and Smith in \cite{neweysmith04} for their Theorem 3.1; the
essential argument is a Taylor expansion of the functionals in equation
(\ref{eq:511}).\newline Let begin with a lemma adapted from Theorem 6 due to
Stigler \cite{stigler74} :

\begin{lemma}
\label{lemma_asym} Let $x_{1},...,x_{n}$ be an observed sample drawn iid from
a distribution $F$ with finite variance. We note $F_{n}$ the empirical
distribution of the sample. \newline Let $A:[0;1]\rightarrow\mathbb{R}^{l}$ be
a continuously derivable function such that $A^{\prime}$ is bounded $F^{-1}%
$-a.e. Then
\[
n^{1/2}\left(  \int xdA(F_{n}(x))-\int xdA(F(x))\right)  \rightarrow
_{d}N(0,\Sigma_{A})
\]
with
\[
\Sigma_{A}=\iint\left[  F(\min(x,y))-F(x)F(y)\right]  A^{\prime}%
(F(x))A^{\prime}(F(y))^{T}dxdy.
\]

\end{lemma}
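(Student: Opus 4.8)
The plan is to rewrite $\int x\, dA(F_n(x))$ as an L-statistic with a smooth weight function and to invoke Stigler's central limit theorem for such statistics (\cite{stigler74}, Theorem 6). Since $F_n$ jumps by $1/n$ at each order statistic $x_{i:n}$, the Stieltjes measure $dA(F_n(\cdot))$ carries mass $A(i/n)-A((i-1)/n)=\int_{(i-1)/n}^{i/n}A'(u)\,du$ at $x_{i:n}$, and $F_n^{-1}$ equals $x_{i:n}$ on $((i-1)/n,i/n]$, so that
\[
\int x\, dA(F_n(x))=\sum_{i=1}^{n}\Bigl(\int_{(i-1)/n}^{i/n}A'(u)\,du\Bigr)x_{i:n}=\int_{0}^{1}F_n^{-1}(u)\,A'(u)\,du ,
\]
the empirical analogue of the identity $\int x\, dA(F(x))=\int_{0}^{1}F^{-1}(u)\,A'(u)\,du$ (change of variables $u=F(x)$; this is the identity recorded in Lemma \ref{stieljes} and used in Remark \ref{remark2}, now applied with $A$ in place of $K_r$). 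Hence the statement is exactly a CLT for the L-functional $F\mapsto\int_{0}^{1}F^{-1}(u)J(u)\,du$ evaluated at $F_n$, with weight-generating function $J:=A'$, which by hypothesis is continuous and bounded on $[0;1]$, while $F$ has finite variance.

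For the vector-valued conclusion I would use the Cram\'er--Wold device. Fix $c\in\mathbb{R}^{l}$; then $c^{T}\!\int x\, dA(F_n(x))=\int_{0}^{1}F_n^{-1}(u)\,c^{T}A'(u)\,du$ is a scalar L-statistic with weight function $u\mapsto c^{T}A'(u)$, again continuous and bounded, so Stigler's Theorem 6 applies and gives
\[
n^{1/2}\Bigl(c^{T}\!\int x\, dA(F_n(x))-c^{T}\!\int x\, dA(F(x))\Bigr)\longrightarrow_{d}N\bigl(0,c^{T}\Sigma_{A}c\bigr),
\]
where Stigler's variance formula reads $c^{T}\Sigma_{A}c=\iint[F(\min(x,y))-F(x)F(y)]\,\bigl(c^{T}A'(F(x))\bigr)\bigl(A'(F(y))^{T}c\bigr)\,dx\,dy$ and the centering is the L-functional $\int_{0}^{1}F^{-1}(u)\,c^{T}A'(u)\,du=c^{T}\!\int x\, dA(F(x))$. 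Since this holds for every $c\in\mathbb{R}^{l}$, the announced $l$-dimensional convergence follows with covariance matrix $\Sigma_{A}$.

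The only remaining work is the verification of Stigler's hypotheses, which is where I expect whatever (mild) difficulty there is to lie: one must confirm that the boundedness of $A'$ together with $\int x^{2}\,dF(x)<\infty$ controls the contribution of the extreme order statistics, i.e.\ the behaviour near $u=0$ and $u=1$ where $F^{-1}$ may be unbounded, and that the double integral defining $\Sigma_{A}$ converges. This is precisely the regularity regime in which Stigler's Theorem 6 was established, so no estimate beyond those already available there is needed, and the argument reduces to a translation of notation, the finite-variance assumption doing the analytic work.
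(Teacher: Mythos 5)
Your argument is correct and is essentially the paper's own: the authors likewise treat this lemma as a direct adaptation of Theorem 6 of Stigler \cite{stigler74}, offering no further proof beyond that citation, and your rewriting of $\int x\,dA(F_n(x))$ as the L-statistic $\int_0^1 F_n^{-1}(u)A'(u)\,du$ together with the Cram\'er--Wold device simply makes explicit the reduction the paper leaves implicit.
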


In the following, we will note $\frac{dT}{d\lambda}(x)=T^{\prime}(x)$ for all
$x\in\mathbb{R}$ .\newline

\textbf{First step} : maximization step\newline

Clearly, it holds
\begin{equation}
\inf_{T\in\cup_{\theta}L_{\theta}^{\prime\prime}(F_{n})}\int_{\mathbb{R}%
}\varphi(T^{\prime}(x))dx\leq\inf_{T\in L_{\theta_{0}}^{\prime\prime}(F_{n}%
)}\int_{\mathbb{R}}\varphi(T^{\prime}(x))dx. \label{eq_proof1}%
\end{equation}
By Taylor-Lagrange expansion, there exists some $D>0$ such that for $n$ large
enough and for any $t$ in $[1-n^{-1/4};1+n^{1/4}]$
\[
\varphi(t)\leq\frac{D}{2}(t-1)^{2}%
\]
holds.

We may then majorize the RHS in (\ref{eq_proof1}) by the solution of the
quadratic case. Let
\[
T_{0,n}^{\prime}(x):=1+(f(\theta_{0})-m_{n})^{T}\Omega_{n}^{-1}K(F_{n}(x))
\]
where $m_{n}:=\int K(F_{n}(x))dx$ and $\Omega_{n}:=\int_{\mathbb{R}}%
K(F_{n}(x))K(F_{n}(x))^{T}dx$. As $T_{0,n}^{\prime} \in L_{\theta}%
^{\prime\prime}(F_{n}) $, it holds
\[
\inf_{T\in L_{\theta_{0}}^{\prime\prime}(F_{n})}\int_{\mathbb{R}}%
\varphi(T^{\prime}(x))dx\leq\int_{\mathbb{R}}\varphi(T_{0,n}^{\prime}(x))dx.
\]

From Lemma \ref{lemma_asym}, we deduce that $\Omega_{n}\rightarrow\Omega$ in
probability. As $\Omega$ is non singular, for $n$ large enough, $\Omega_{n}$
is non singular and $T_{0,n}^{\prime}$ is well defined.\newline As $\Vert
f(\theta_{0})-m_{n}\Vert=O_{P}(n^{-1/2})$ from Lemma \ref{lemma_asym} and
$\Vert\Omega_{n}^{-1}\Vert=O_{P}(1)$, for almost all $x\in\mathbb{R}$,
\[
T_{0,n}^{\prime}(x)=1+O_{P}(n^{-1/2})
\]
and we can apply a Taylor-Lagrange maximization
\[
\varphi(T_{0,n}^{\prime}(x))\leq\frac{D}{2}(f(\theta_{0})-m_{n})^{T}\Omega
_{n}^{-1}K(F_{n}(x))K(F_{n}(x))^{T}\Omega_{n}^{-1}(f(\theta_{0})-m_{n}).
\]
By integration in the above display
\begin{align*}
\int_{\mathbb{R}}\varphi(T_{0,n}^{\prime}(x))dx  &  \leq\frac{D}{2}%
(f(\theta_{0})-m_{n})\Omega_{n}^{-1}\left[  \int_{\mathbb{R}}K(F_{n}%
(x))K(F_{n}(x))^{T}dx\right]  \Omega_{n}^{-1}(f(\theta_{0})-m_{n})\\
&  \leq\Vert f(\theta_{0})-m_{n}\Vert^{2}\Vert\Omega_{n}^{-1}\Vert
=O_{P}(n^{-1}).
\end{align*}

\textbf{Second step} : minimization step\newline

Since $\Theta$ is compact, and $\varphi$ is strictly convex, and
$\theta\mapsto\inf_{T\in L_{\theta}^{\prime\prime}(F_{n})}\int_{\mathbb{R}%
}\varphi(T^{\prime}(x))dx$ is continuous (see Proposition \ref{prop2}), it
follows that $\hat{\theta}$ is well defined and the duality equality states
\begin{align*}
\inf_{T\in L_{\hat{\theta}_{n}}^{\prime\prime}(F_{n})}\int_{\mathbb{R}}%
\varphi(T^{\prime}(x))dx  &  =\sup_{\xi\in\mathbb{R}^{l}}\xi^{T}f(\hat{\theta
}_{n})-\int\psi(\xi^{T}K(F_{n}(x)))dx\\
&  \geq\xi_{n}^{T}f(\hat{\theta}_{n})-\int\psi(\xi_{n}^{T}K(F_{n}(x)))dx
\end{align*}
with
\[
\xi_{n}=n^{-1/2}\frac{f(\hat{\theta}_{n})-m_{n}}{\Vert f(\hat{\theta}%
_{n})-m_{n}\Vert}.
\]
.\newline Therefore%
\[
\xi_{n}^{T}K(F_{n}(x))=O_{P}(n^{-1/2})\text{ for a.e $x\in\mathbb{R}$}.
\]
By Taylor-Lagrange expansion, there exists a constant $C>0$ such that
$|\psi(x)-x|<Cx^{2}$ in a neighborhood of 0. Thus, for $n$ large enough
\[
\int\psi(\xi_{n}^{T}K(F_{n}(x)))dx-\xi_{n}^{T}m_{n}<C\int\xi_{n}^{T}%
K(F_{n}(x))K(F_{n}(x))^{T}\xi_{n}dx=C\xi_{n}^{T}\Omega_{n}\xi_{n}%
\]
and
\[
\inf_{T\in L_{\hat{\theta}_{n}}^{\prime\prime}(F_{n})}\int_{\mathbb{R}}%
\varphi(T^{\prime}(x))dx\newline>\xi_{n}^{T}(f(\hat{\theta}_{n})-m_{n}%
)-C\xi_{n}^{T}\Omega_{n}\xi_{n}.
\]

\textbf{Conclusion}\newline Combining the two inequalities, we have
\[
n^{-1/2}\Vert f(\hat{\theta}_{n})-m_{n}\Vert<C\Vert\Omega_{n}\Vert
n^{-1}+\Vert f(\theta_{0})-m_{n}\Vert^{2}\Vert\Omega_{n}^{-1}\Vert
=O_{P}(n^{-1})
\]
i.e. $\Vert f(\hat{\theta}_{n})-m_{n}\Vert=O_{P}(n^{-1/2})$.\newline By Lemma
\ref{lemma_asym}, $\Vert m_{n}-f(\theta_{0})\Vert=O_{P}(n^{-1/2})$. Hence,
$\Vert f(\hat{\theta}_{n})-f(\theta_{0})\Vert=O_{P}(n^{-1/2})$.\newline Since
$f(\theta)=f(\theta_{0})$ has a unique solution at $\theta_{0}$, $\Vert
f(\theta)-f(\theta_{0})\Vert$ is bounded away from zero outside some
neighborhood of $\theta_{0}$. Therefore $\hat{\theta}_{n}$ is inside any
neighborhood of $\theta_{0}$ with probability approaching 1 i.e $\hat{\theta
}_{n}\rightarrow\theta_{0}$ in probability.

\subsection{Proof of Theorem \ref{theorem_asymp}}

First we prove that
\[
\hat{\xi}_{n}=\arg\max_{\xi}\xi^{T}f(\hat{\theta}_{n})-\int\psi(\xi^{T}%
K(F_{n}(x))dx=O_{P}(n^{-1/2}).
\]
Consider
\[
\xi_{n}=\arg\max_{\xi\in\mathbb{R}^{l}\text{ s.t. }\Vert\xi\Vert<n^{-1/4}}%
\xi^{T}f(\hat{\theta}_{n})-\int\psi(\xi^{T}K(F_{n}(x))dx,
\]
where the maximum is taken on a ball of radius $n^{-1/4}$. The maximum is
attained because of the concavity of the functional
\[
U:\xi\mapsto\xi^{T}f(\hat{\theta}_{n})-\int\psi(\xi^{T}K(F_{n}(x))dx.
\]
For all $x$ in a neighborhood of $0$, the inequality $y-\psi(y)<-Cy^{2}$ for
some $C>0$ holds. For $n$ large enough, as $\Vert\xi_{n}\Vert<n^{-1/4}$ we can
claim (as $\psi(0)=0$)
\begin{align*}
0  &  \leq\xi_{n}^{T}f(\hat{\theta}_{n})-\int\psi(\xi_{n}^{T}K(F_{n}(x))dx\\
&  \leq\xi_{n}^{T}(f(\hat{\theta}_{n})-m_{n})-C\xi_{n}^{T}\Omega_{n}\xi_{n}\\
&  \leq\Vert\xi_{n}\Vert.\Vert f(\hat{\theta}_{n})-m_{n}\Vert-C\xi_{n}%
\Omega_{n}\xi_{n},
\end{align*}
with $m_{n}:=\int K(F_{n}(x))dx$.\newline Furthermore, there exists $D>0$ such
that $\Vert\Omega_{n}\Vert\geq D>0$ for $n$ large enough and
\[
CD\leq C\frac{\xi_{n}^{T}}{\Vert\xi_{n}\Vert}\Omega_{n}\frac{\xi_{n}}{\Vert
\xi_{n}\Vert}\leq\frac{\Vert f(\hat{\theta}_{n})-m_{n}\Vert}{\Vert\xi_{n}%
\Vert}.
\]
It follows that $\xi_{n}=O_{P}(n^{-1/2})$ and that $\xi_{n}$ is an interior
point of $\{\xi\in\mathbb{R}^{l}\text{ s.t. }\Vert\xi\Vert<n^{-1/4}\}$; by
concavity of the functional $U$, $\xi_{n}$ is the unique maximizer, hence
$\xi_{n}=\hat{\xi}_{n}$.\newline We write the first order conditions of
optimality of $(\hat{\theta}_{n}-\theta_{0},\hat{\xi}_{n})$ :
\[
\left\{
\begin{array}
[c]{l}%
(f(\hat{\theta}_{n})-f(\theta_{0}))+(f(\theta_{0})-m_{n})-\int\left[
\psi^{\prime}(\hat{\xi}_{n}K(F_{n}(x))-1\right]  K(F_{n}(x))dx=0\\
J_{f}(\hat{\theta}_{n})\hat{\xi}_{n}=0
\end{array}
\right.
\]
A mean value expansion (since $\theta_{0}\in int(\Theta)$) gives the existence
of $\bar{\xi}$ and $\bar{\theta}$ such that $\Vert\bar{\xi}\Vert<\Vert\hat
{\xi}_{n}\Vert$ and $\Vert\bar{\theta}-\theta_{0}\Vert<\Vert\hat{\theta}%
_{n}-\theta_{0}\Vert$ such that
\[
\left\{
\begin{array}
[c]{l}%
J_{f}(\bar{\theta})(\theta-\theta_{0})+(f(\theta_{0})-m_{n})-\left[  \int
\psi^{\prime\prime}(\bar{\xi}K(F_{n}(x))K(F_{n}(x))K(F_{n}(x))dx\right]
\hat{\xi}_{n}=0\\
J_{f}(\hat{\theta}_{n})\hat{\xi}_{n}=0
\end{array}
\right.  .
\]

It holds
\[
A_{n}:=\left(
\begin{array}
[c]{cc}%
J_{f}(\bar{\theta}) & -\int\psi^{\prime\prime}(\bar{\xi}K(F_{n}(x))K(F_{n}%
(x))K(F_{n}(x))dx\\
0 & J_{f}(\hat{\theta}_{n})
\end{array}
\right)  \rightarrow_{p}A:=\left(
\begin{array}
[c]{cc}%
J_{0} & -\Omega\\
0 & J_{0}%
\end{array}
\right)  .
\]
By the very definition of $A_{n}$,
\[
A_{n}\left(
\begin{array}
[c]{cc}%
\hat{\theta}_{n}-\theta_{0} & \\
\hat{\xi}_{n} &
\end{array}
\right)  =\left(
\begin{array}
[c]{cc}%
m_{n}-f(\theta_{0}) & \\
0 &
\end{array}
\right)  .
\]
As $\Omega$ is non singular and $J_{0}$ has full rank, $A$ is non singular and
its inverse is given by
\[
A^{-1}=\left(
\begin{array}
[c]{cc}%
H & M\\
P & H-H^{T}%
\end{array}
\right)  .
\]
Hence by Lemma \ref{lemma_asym}
\[
\sqrt{n}\left(
\begin{array}
[c]{cc}%
\hat{\theta}_{n}-\theta_{0} & \\
\hat{\xi}_{n} &
\end{array}
\right)  =A_{n}^{-1}\left(
\begin{array}
[c]{cc}%
\sqrt{n}(m_{n}-f(\theta_{0})) & \\
0 &
\end{array}
\right)  \rightarrow_{d}A^{-1}\left(
\begin{array}
[c]{cc}%
\mathcal{N}_{l}(0,\Sigma) & \\
0 &
\end{array}
\right)  ,
\]
which ends the proof.


\end{document}